\tikzset{%
	int/.style={fill=white,opacity=.85,pos=.5, inner sep=.1em,font=\scriptsize},%
	pair/.style={coordinate,shape=circle,draw,scale=.3,fill=white, text=white},%
	solid/.style={coordinate,shape=circle,draw,scale=.3,fill=black},%
	sqr/.style={scale=.4,fill=white,shape=square,outer sep = 3pt},%
	vert/.style={scale=.4,fill=black,shape=circle,outer sep = 3pt},%
	wert/.style={scale=.4,draw,fill=white,shape=circle,outer sep = 3pt},%
	outline/.style={draw,line width=1.5mm,white},%
	sel/.style={draw,shape=circle,scale=2,color=red},%
	R/.style={inner sep=1,fill=gray!20},%
	bcut/.style={blue,dashed,thick},%
	rcut/.style={red,thick},%
}
\DeclareMathOperator{\Cov}{Cov}
\DeclareMathOperator{\Hom}{Hom}
\DeclareMathOperator{\End}{End}
\DeclareMathOperator{\cmod}{\mathsf{mod}}
\DeclareMathOperator{\gr}{\mathsf{gr}}
\DeclareMathOperator{\add}{\mathsf{add}}
\DeclareMathOperator{\Homeo}{Homeo}
\DeclareMathOperator{\im}{Im}
\DeclareMathOperator{\MG}{\mathcal{M}\mathcal{G}}
\newcommand{\calc}{\mathcal{C}}
\newcommand{\cald}{\mathcal{D}}
\newcommand{\frakC}{\mathfrak{C}}
\newcommand{\frakD}{\mathfrak{D}}
\newcommand{\fieldstyle}[1]{\mathbb{#1}}
\renewcommand{\AA}{\fieldstyle{A}}
\newcommand{\NN}{\fieldstyle{N}}
\newcommand{\RR}{\fieldstyle{R}}                             
\newcommand{\ZZ}{\fieldstyle{Z}}                             
\newcommand{\kk}{k}
\newcommand{\e}{\varepsilon}
\renewcommand{\tilde}{\widetilde}
\newcommand{\df}[1]{\emph{#1}}
\theoremstyle{plain}
\newtheorem{thm}{Theorem}
\newtheorem*{thm*}{Theorem}
\newtheorem{lemma}[thm]{Lemma}
\newtheorem{prop}[thm]{Proposition}
\newtheorem{cor}[thm]{Corollary}
\newtheorem*{cor*}{Corollary}
\theoremstyle{definition}
\newtheorem{dfn}[thm]{Definition}
\newtheorem{ex}[thm]{Example}
\newtheorem{remark}[thm]{Remark}
\newenvironment{enum}
	{\begin{enumerate} [\upshape (1)]}
	{\end{enumerate}}
\newenvironment{enuma}
	{\begin{enumerate} [\upshape (a)]}
	{\end{enumerate}}
\newcommand{\IF}{\text{if }}
\newcommand{\orientationofthearrow}{counter-clockwise }
\newcommand{\intri}{\mathfrak{IT}}
\newcommand{\dd}{\dagger}
\renewcommand{\dag}{^\dd}
\newcommand{\cut}[3]{\chi_{#1,#2,#3}}
\newcommand{\cutt}{\chi}
\newcommand{\alg}{\Lambda}
\newcommand{\bcpt}{B}
\newcommand{\te}[1]{\tau_{#1}}   
\newcommand{\pp}{\gamma}         
\begin{document}
\title{Derived Equivalence of Surface Algebras in Genus 0 via Graded Equivalence}
\author{Lucas David-Roesler}
\address{University of Connecticut, 196 Auditorium Road, Unit 3009 Storrs, CT 06269-3009} 
\thanks{The author was supported by the NSF grant DMS-1001637.}
\email{lucas.roesler@uconn.edu}
\thanks{The author would like to thank Professor Ralf Schiffler for his many valuable comments and suggestions in the preparation of this article.  In addition, discussions with Ben Salisbury proved very helpful. }
\date{\today}

\maketitle

\begin{abstract}
We determine some of the derived equivalences of a class of gentle algebras called
surface algebras. These algebras are constructed from an unpunctured Riemann
surface of genus 0 with boundary and marked points by introducing cuts in internal 
triangles of an arbitrary triangulation of the surface. In particular, we fix a
triangulation of a surface and determine when different cuts produce derived
equivalent algebras.
\end{abstract}

\section{Introduction}

Let $T$ be a triangulation of a bordered unpunctured Riemann surface $S$ with 
 a set of marked points $M$, and let $(Q_T,I_T)$ be the bound quiver associated to
$T$ as in \cite{ABCP,CCS}. The corresponding algebra $\alg_T=kQ_T/I_T$, over an
algebraically closed field $k$, is a finite-dimensional gentle algebra \cite{ABCP}
which is also the endomorphism algebra of the cluster-tilting object corresponding
to $T$ in the generalized cluster category associated to $(S,M)$, see
\cite{Amiot,BZ,BMRRT,CCS}. Each internal triangle in the triangulation $T$
corresponds to an oriented $3$-cycle in the quiver $Q_T$, and the relations for
the algebra $B_T$ state precisely that the composition of any two arrows in an
oriented $3$-cycle is zero in $\alg_T$.

In \cite{DS}, surface algebras were introduced as a new setting to describe the
iterated tilted algebras of Dynkin type $\AA$ and $\tilde\AA$, corresponding to
the case where $S$ is a disc and annulus respectively. In addition to the iterated
tilted algebras of type $\AA$ or $\tilde \AA$ with global dimension 2, the authors
obtained the larger class of surface algebras by realizing the concept of an
admissible cut, as defined in \cite{BFPPT}, in the surface. This procedure
increases the number of marked points in each boundary component while the number
of edges in the triangulation remains fixed, so the resulting algebra comes from a
partial triangulation of a surface. In terms of the quiver $Q_T$, we get a new
quiver associated to this partial triangulation by removing one arrow from the
oriented 3-cycles corresponding to internal triangles. The surface algebras that 
are not iterated tilted do not appear in any other known classification of algebras. 
In general, there are many different surface algebras that can arise even when we 
fix a triangulation. It is natural to ask how these new algebras are related to 
each other.

We focus on describing the derived categories of these algebras. This work is
motivated by the fact that derived equivalence in the disc and annulus is
relatively easy to check. For the surface algebras of the disc and annulus,
derived equivalence is determined by the derived invariant of Avella-Alaminos and
Geiss defined in \cite{AG}. This invariant is easy to calculate for surface
algebras, see \cite{DS}. However, for surfaces with higher genus or with more than
two boundary components, this invariant need not determine derived equivalence.
However, using the AG-invariant, we can show that there may be several derived
equivalence classes of algebras for a fixed triangulation of any surface
other than the disc, see \cite{DS}. In fact, for surface algebras from the
annulus, there must be at least two derived equivalence classes.

In this paper, we present a method for determining the derived equivalence of
surface algebras coming from a fixed triangulation of $T$ of a surface with genus
0. That is, we restrict ourselves to considering those surface algebras that come
from different cuts of the same triangulated surface. We do not attack this
directly, rather, we take advantage of recent work by Amiot and Oppermann
\cite{AO} in which they show that in certain cases derived equivalence is the same
as considering graded equivalence with respect to a suitable grading of the
arrows. In particular, this is true for surface algebras. This greatly simplifies
the problem because we are able to describe the graded equivalences in terms of
the cuts that define our algebras and automorphisms of the surface.

We denote cuts of a surface by $\chi$. A pair of two cuts $(\chi_1,\chi_2)$ is
called \df{equi-distributed} if for each boundary component $\bcpt$ of $S$, the
number of cuts in $\chi_1$ on $\bcpt$ is equal to the number of cuts in $\chi_2$
on $\bcpt$. When $\chi_1$ and $\chi_2$ are equi-distributed, we can view $\chi_1$
as being a permutation of $\chi_2$. Additionally, given a cut $\chi$ we get a
grading on $\alg_T$ by assigning the weight 1 to each arrow removed from $Q_T$ by
$\chi$ and 0 for all other arrows in $Q_T$, we denote the graded algebra obtained
in this way by $\tilde \alg$. We have our first main theorem.

\begin{thm*}
Let $(S,M,T)$ be a triangulated bordered surface of genus 0 and $\alg_1$ and
$\alg_2$ be surface algebras of type $(S,M,T)$ coming from admissible cuts
$\chi_1$ and $\chi_2$. Then $\tilde\alg_1$ and $\tilde\alg_2$ are graded
equivalent if there is an automorphism $f$ of the surface
(up to isotopy) such that $f$ induces a quiver automorphism on $Q_T$ 
and $(\chi_1,f(\chi_2))$ or $(f(\chi_1),\chi_2)$ are equi-distributed.
\end{thm*}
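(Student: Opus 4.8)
The plan is to first use the surface automorphism to reduce to the equi-distributed case, and then to prove that equi-distributed cuts induce gradings on $\alg_T$ that differ by a coboundary, whence the associated categories of graded modules are equivalent. Throughout I identify a cut $\chi$ with its induced degree function $d_\chi\colon (Q_T)_1 \to \ZZ$, equal to $1$ on the arrows removed by $\chi$ and $0$ on all other arrows, so that $\tilde\alg_i = (\alg_T, d_{\chi_i})$; since every relation of $\alg_T$ is monomial (a length-two subpath of an oriented $3$-cycle), every such $d_\chi$ is automatically a well-defined grading. For the reduction, the quiver automorphism extends to an algebra automorphism $\phi$ of $\alg_T$, because $f$ permutes the internal triangles and hence preserves $I_T$. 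For any cut one has $d_{f(\chi)} \circ \phi = d_\chi$ (an arrow $a$ lies in $\chi$ exactly when $\phi(a)$ lies in $f(\chi)$), so $\phi$ is an isomorphism of graded algebras $(\alg_T, d_{\chi_2}) \xrightarrow{\sim} (\alg_T, d_{f(\chi_2)})$, and symmetrically with $\chi_1$ in place of $\chi_2$. Thus $\tilde\alg_2$ is graded equivalent to the grading induced by $f(\chi_2)$, and it suffices to treat the case in which $(\chi_1,\chi_2)$ is itself equi-distributed.

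\textbf{Main step.} I claim that two gradings $d_{\chi_1}, d_{\chi_2}$ of the fixed algebra $\alg_T$ yield equivalent categories of graded modules whenever $d_{\chi_1} - d_{\chi_2} = \partial g$ for some $g\colon (Q_T)_0 \to \ZZ$, where $\partial g(a) = g(t(a)) - g(s(a))$: one re-indexes the grading on each vertex-component of a graded module by $g$, and the compatibility of this re-indexing with the action is exactly the coboundary condition. By standard graph (co)homology, $d_{\chi_1} - d_{\chi_2}$ is a coboundary precisely when its total weight vanishes on every cycle of a basis of the first homology of the underlying graph of $Q_T$. I would take as a spanning family the oriented $3$-cycles of the internal triangles together with one boundary cycle for each component $\bcpt$ of $S$. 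On a $3$-cycle, every admissible cut removes exactly one arrow, so both $d_{\chi_1}$ and $d_{\chi_2}$ have total weight $1$ there and their difference has weight $0$; on the cycle around $\bcpt$, the winding number of $d_{\chi_i}$ equals the number of cuts of $\chi_i$ on $\bcpt$, which agree by equi-distribution. Hence $d_{\chi_1} - d_{\chi_2}$ is a coboundary and $\tilde\alg_1, \tilde\alg_2$ are graded equivalent.

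The main obstacle is the topological input concealed in the last step: that the triangle $3$-cycles together with the boundary cycles really do span the first homology of the underlying graph of $Q_T$. This is exactly where the genus-$0$ hypothesis is essential—realizing $Q_T$ as a graph embedded in the planar surface $S$ (a disc with holes), its cycle space is generated by the bounded faces, which are precisely the triangles and the regions enclosing the boundary components, whereas in positive genus the handles contribute further independent cycles whose winding numbers are not controlled by equi-distribution. I would also need to pin down the identification used above, namely that an arrow removed by a cut lying ``on $\bcpt$'' is an arrow traversed by the chosen boundary cycle of $\bcpt$, so that the boundary winding number literally counts the cuts on $\bcpt$; this should follow by unwinding the definition of a cut together with the orientation conventions on $Q_T$, but it is the point that must be checked most carefully.
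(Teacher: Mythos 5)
Your proof is correct, and while your reduction step coincides with the paper's, your core argument for the equi-distributed case is genuinely different. The paper's proof of this statement (Theorem~\ref{thm2}) performs exactly your reduction: $f$ induces a graded isomorphism between the gradings by $\chi_2$ and by $f(\chi_2)$, so everything rests on the equi-distributed case, which is the paper's Theorem~\ref{thm1}. There, the paper works with the covering quiver of $\Cov(\tilde\alg_1)$: it builds the shift function $r$ of Proposition~\ref{prop:graded equiv} by propagating values across sliding edges, region by region in the surface, and proves consistency by showing that around any loop $\rho$ the accumulated change satisfies $\Delta_r\rho=-\Delta_\chi\rho=0$, with genus $0$ entering because $\rho$ separates $S$ into inside and outside. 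Your argument replaces all of this with graph cohomology on $Q_T$ itself: the existence of $r$ (equivalently, graded equivalence via the identity, by Remark~\ref{remark:equivviaid}) is precisely your coboundary condition $d_{\chi_1}-d_{\chi_2}=\partial g$, which you check on a generating family of cycles --- the internal-triangle $3$-cycles, where vanishing is automatic because every admissible cut removes exactly one arrow of each oriented $3$-cycle, and the boundary cycles, where vanishing is exactly equi-distribution. The two points you flag are genuine but both hold: an Euler-characteristic count shows that the faces of $Q_T$ embedded in the genus-$0$ surface are precisely the internal-triangle $3$-cycles together with one face per boundary component (their number matches the cycle-space dimension), and bounded faces of a planar graph generate its cycle space; for the orientation bookkeeping, the arrow removed by a cut at a marked point of $\bcpt$ bounds the corner region lying in the face of $\bcpt$, hence is traversed exactly once by that boundary walk and by no other boundary walk, with a sign depending only on the global orientation conventions --- a finite local check of the same nature as the case analysis the paper relegates to Figures~\ref{fig:proof2sliding} and~\ref{fig:proof1sliding}. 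What your route buys is economy and transparency: no covering quiver, no sliding-edge and region combinatorics, and the genus-$0$ hypothesis isolated in one standard statement about planar cycle spaces. What the paper's route buys is the explicit region-wise description of the level partitions, which it reuses for the converse direction of Theorem~\ref{thm1} and in the proof of Theorem~\ref{thm3}.
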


Using the work by Amiot and Oppermann, this theorem becomes 
a statement about derived equivalence. The graded equivalence of $\tilde\alg_1$
and $\tilde\alg_2$ becomes a derived equivalence of $\alg_1$ and $\alg_2$.

\begin{cor*}
Let $\alg_i$ and $\chi_i$ be as in the theorem. Then $\alg_1$ and $ \alg_2$ are
derived equivalent if there is an automorphism $f$ of the surface (up
to isotopy) such that $f$ induces a quiver automorphism on $Q_T$ and
$(\chi_1,f(\chi_2))$ or $(f(\chi_1),\chi_2)$ are equi-distributed.
\end{cor*}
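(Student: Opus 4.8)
The plan is to obtain the corollary as a formal consequence of the theorem combined with the Amiot--Oppermann correspondence recalled in the introduction. Concretely, I would argue in two steps. First, since the hypotheses of the corollary are word-for-word those of the theorem, I apply the theorem directly to conclude that $\tilde\alg_1$ and $\tilde\alg_2$ are graded equivalent. Second, I invoke the main result of \cite{AO} to upgrade this graded equivalence into a derived equivalence of the underlying cut algebras $\alg_1$ and $\alg_2$. All of the genuine mathematical work sits in the theorem and in \cite{AO}; the role of the corollary's proof is to connect the two correctly.

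The substantive point is therefore the second step, so I would begin by stating precisely what \cite{AO} gives in this context. For algebras of the relevant class---endomorphism algebras of cluster-tilting objects, equipped with the grading induced by a cut---Amiot and Oppermann show that the derived equivalence class of the cut algebra is controlled by the graded-equivalence class of the graded algebra; as noted in the introduction, this applies in particular to surface algebras. In our notation, $\tilde\alg_i$ is $\alg_T$ graded by assigning degree $1$ to the arrows removed by $\chi_i$ and degree $0$ to the rest, and the cut algebra of this graded algebra is exactly the surface algebra $\alg_i$ determined by the admissible cut $\chi_i$. Thus the \cite{AO} correspondence specializes to the statement that $\alg_1$ and $\alg_2$ are derived equivalent whenever $\tilde\alg_1$ and $\tilde\alg_2$ are graded equivalent, which is precisely the implication we require.

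With that correspondence in place the conclusion is immediate: the theorem supplies the graded equivalence $\tilde\alg_1 \simeq \tilde\alg_2$ under the stated automorphism condition on $f$, and \cite{AO} converts it into the desired derived equivalence $\alg_1 \simeq \alg_2$ in the bounded derived category. I expect the only real obstacle to be bookkeeping rather than mathematics, namely verifying that the two notions of ``graded equivalence'' coincide---that the grading used to define $\tilde\alg_i$ in the theorem is exactly the cut-grading governing derived equivalence in \cite{AO}, and that $\alg_i$ is recovered as the degree-zero cut algebra of $\tilde\alg_i$. Once this matching is certified, and since the admissibility of the cuts $\chi_i$ places $\alg_T$ and the gradings $\tilde\alg_i$ within the scope of \cite{AO}, no further argument is needed and the implication follows formally.
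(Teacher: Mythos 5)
Your proposal is correct and follows essentially the same route as the paper: the paper also obtains this corollary by combining the graded-equivalence theorem with the Amiot--Oppermann result (restated in the paper as Theorem~\ref{thm AO}, i.e.\ \cite[Theorem 5.6]{AO}), which says that surface algebras from admissible cuts are derived equivalent if and only if the corresponding graded algebras $\tilde\alg_1$ and $\tilde\alg_2$ are graded equivalent. Your attention to matching the cut-grading of Definition~\ref{dfn:weight} with the grading used in \cite{AO} is exactly the bookkeeping implicit in the paper's reformulation.
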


Related work has been done for unpunctured surfaces without cuts.  Ladkani
\cite{Lad} uses quiver mutation to characterize the surfaces such that all the 
algebras arising from their  triangulations are derived equivalent. 
Bobinski and Buan \cite{BB} classified the gentle algebras that are derived 
equivalent to cluster-tilted algebras of type $\AA$ and $\tilde{\AA}$, these 
arise from the triangulations of the disc and annulus.  Their proof makes use 
Brenner-Butler tilting via reflections of gentle algebras. 
We realize a connection between these two methods of studying derived 
equivalence by characterizing the reflections of surface algebras as
cut versions of mutation in the surface. Let $R_x$ denote the reflection
of $Q$ at the vertex $x$ and $\mu_x$ the mutation at $x$.  We have the 
following theorem.
\begin{thm*} Let $(Q,I)$ be the quiver with relations of a surface algebra 
of type $(S,M,T)$. If $x$ is not the source of a relation in $(Q,I)$ and 
$R_x$ is defined, then there is an admissible cut of $\mu_x(Q_T)$ that 
gives $R_x(Q)$.
\end{thm*}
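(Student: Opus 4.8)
The plan is to prove this by a local analysis at the vertex $x$, exploiting the fact that both $\mu_x$ and $R_x$ alter the quiver with relations only in a neighborhood of $x$. Since $x$ is an internal edge of $T$, it is the diagonal of a quadrilateral formed by the two triangles meeting along $x$; I would denote these triangles $\Delta$ and $\Delta'$ and their remaining four sides $e_1,e_2,e_3,e_4$, each of which is either an internal edge (a vertex of $Q_T$) or a boundary segment. Flipping $x$ replaces it by the opposite diagonal $x'$ and regroups the four sides into two new triangles; by the standard compatibility of diagonal flips with quiver mutation in the framework of \cite{CCS,ABCP}, the resulting quiver is exactly $\mu_x(Q_T)=Q_{T'}$, where $T'$ is the flipped triangulation. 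Away from the quadrilateral, $T$, $Q_T$, $\chi$ and the effect of $R_x$ all agree with $T'$, $Q_{T'}$ and the cut I am about to build, so it suffices to match the two constructions inside the quadrilateral.

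Next I would cut down the number of cases using the two hypotheses. The assumption that $R_x$ is defined fixes the local orientation at $x$ --- say $x$ is a source, so both arrows incident to $x$ point out of it --- and the assumption that $x$ is not the source of a relation guarantees that neither of these arrows is the initial arrow of a length-two zero relation coming from a cut internal triangle. Together these pin down, up to the evident symmetry, the possible configurations of $\Delta$ and $\Delta'$: which of $e_1,\dots,e_4$ are internal, which of $\Delta,\Delta'$ are internal triangles (hence carry a $3$-cycle cut by $\chi$), and which arrow of each such $3$-cycle is removed by $\chi$. For each surviving configuration I would write down explicitly the arrows and relations of $R_x(Q)$ near $x$, using that the reflection reverses the arrows at $x$ and reroutes the incident relations across $x$.

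Finally I would exhibit the cut $\chi'$ of $Q_{T'}=\mu_x(Q_T)$ realizing $R_x(Q)$: set $\chi'=\chi$ off the quadrilateral, and inside it choose, for each internal triangle of $T'$, the arrow to remove so that the surviving length-two relations agree with those just computed for $R_x(Q)$. The underlying quivers will agree because $R_x$ acts on the arrows at $x$ exactly as the Fomin--Zelevinsky mutation rule does, so the main obstacle is matching the \emph{relations}: a flip can change how many of the two triangles at $x$ are internal, and hence how many length-two relations the quadrilateral contributes, so one must verify case by case that $R_x$ creates or destroys precisely the corresponding relations and that the resulting $\chi'$ is admissible. This is exactly where the hypothesis that $x$ is not the source of a relation enters --- it excludes the configuration in which the cut triangle incident to $x$ would force a relation beginning at $x$ that no admissible cut of $T'$ could reproduce --- leaving a finite check that the chosen $\chi'$ recovers $R_x(Q)$ in every remaining case.
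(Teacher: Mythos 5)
Your overall skeleton --- locality of both operations at $x$, the flip/mutation compatibility $\mu_x(Q_T)=Q_{T'}$, and a finite case-by-case dictionary matching $R_x(Q)$ against a cut of $Q_{T'}$ --- is exactly the strategy of the paper's proof. However, your case analysis rests on a misreading of when $R_x$ is defined, and this would derail the enumeration. You assert that ``$R_x$ is defined'' forces $x$ to be a source, with both incident arrows pointing out of $x$. The definition says the opposite: $R_x$ is defined when for \emph{every} arrow $\alpha$ with $s(\alpha)=x$ there exists an arrow $\beta_\alpha$ \emph{into} $x$ with $\beta_\alpha\alpha\notin I$. If $x$ were a source carrying at least one arrow, no such $\beta_\alpha$ could exist and $R_x$ would be undefined. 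The configurations actually compatible with the hypotheses are those where $x$ is a sink (the condition is then vacuous) or where the incoming arrows at $x$ are matched, in the gentle sense, with the outgoing ones; the paper's ten local configurations are of exactly these two kinds, and none of them has $x$ a source. Starting from ``$x$ is a source'' you would enumerate precisely the excluded configurations and miss all of the real ones, and no source/sink symmetry repairs this: the dual condition (every in-arrow matched by an out-arrow) is what defines the co-reflection $R_x^-$, which is the separate ``dual'' clause of the theorem, not a symmetry within the proof of the $R_x$ statement.

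A second inaccuracy sits in your final step: the claim that the underlying quivers agree because ``$R_x$ acts on the arrows at $x$ exactly as the Fomin--Zelevinsky mutation rule does.'' It does not. Mutation at $x$ reverses each arrow $\alpha\colon x\to k$ of $Q_T$, producing an arrow $k\to x$ in $\mu_x(Q_T)$; the reflection produces no such arrow --- instead it reroutes $\alpha$ to an arrow $s(\beta_\alpha)\to k$ (which matches mutation's composed arrow through $x$) and records the relation $\beta_\alpha\alpha$. The two quivers therefore agree only \emph{after} the cut $\chi'$ removes, among other things, exactly those reversed arrows; for instance, in the configurations where $x$ has outgoing arrows and is not the target of a relation, the paper's dictionary cuts precisely the arrows of $\mu_x(Q_T)$ ending at $x$. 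So identifying which arrows $\chi'$ removes is part of the quiver matching, not merely of the relation matching, and it is where the actual verification lives. With the corrected list of configurations (including the observation, made in the paper, that when $x$ is a sink with no incident relations, reflection and mutation literally coincide), the rest of your plan does reduce to the paper's finite dictionary check.
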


The use of reflections allows us to realize the derived equivalence of
surface algebras coming from \emph{different} triangulations. Additionally,
this theorem gives us a way to realize derived equivalences of surface 
algebras in the module category.  In the work of Amiot and Opperman \cite{AO},
they explicitly describe the tilting object associated to a graded equivalence.
This tilting object is specifically described in the derived category; hence, 
the derived equivalences given by non-trivial automorphisms of the surface are 
necessarily given by tilting objects in the derived category that can not be 
viewed as sitting in the module category.

We would like to remark that results of Amiot and Oppermann in \cite{AO2} give a
complete description of the derived equivalence classes of surface algebras of
type $\tilde\AA$. They do this by considering graded mutations of quivers with
potentials and introducing an invariant called the weight or algebra.  Similar 
work for the surface algebras of type $\tilde\AA$ was also done in \cite{DS} 
using the AG-invariant. Using reflections, we give an alternative realization
of the derived equivalences of surface algebras of type $\tilde\AA$.

In Sections 2 and 3 we introduce the necessary definitions and background 
about surface algebras and graded algebras.  Section 3 ends with a partial
description of the graded equivalences given by the identity map on $S$.
Section 4 contains the main theorem of the paper, extending the description
in Section 3 to other elements in the mapping class group of $(S,M)$. Note 
that the definition of the mapping class group is different from the usual 
definition.  Section 5 reformulates the theorems about graded equivalences 
in terms of derived equivalences. The final section considers derived 
equivalences of surface algebras given by reflections of gentle algebras.  

\section{Preliminaries and Notation}
In this section we give an alternative but equivalent definition of surface algebras from 
\cite{DS}.

\subsection{Triangulated surfaces}
Let $S$ be a connected oriented unpunctured Riemann surface with boundary
$\partial S$ and let $M$ be a non-empty finite subset of the boundary $\partial
S$ with at least one point in each boundary component. The elements of $M$ are 
called \df{marked points}. We will refer to the pair $(S,M)$ simply as an 
\df{unpunctured surface}.

We say that two curves in $S$ \emph{do not cross} if they do not intersect
each other except that the endpoints may coincide.

\begin{dfn}\label{def arc}
An \emph{arc} $\pp$ in $(S,M)$ is a curve in $S$ such that 
\begin{enuma}
\item the endpoints are in $M$,
\item $\pp$ does not cross itself,
\item the relative interior of $\pp$ is disjoint from $M$ and
  from the boundary of $S$,
\item $\pp$ does not cut out a monogon or a digon. 
\end{enuma}
If $\pp$ is called a \df{generalized arc} if it satisfies only conditions (a), (c) 
and (d).
\end{dfn}

The \df{boundary segments} of $S$ are those curves that connect two
marked points and lie entirely on the boundary of $S$ without passing
through a third marked point

We consider generalized arcs up to isotopy inside the class of such curves.
Moreover, each generalized arc is considered up to orientation, so if a
generalized arc has endpoints $a,b\in M$ then it can be represented by a curve
that runs from $a$ to $b$, as well as by a curve that runs from $b$ to $a$.

For any two arcs $\pp,\pp'$ in $S$, let $e(\pp,\pp')$ be the minimal
number of crossings of $\pp$ and $\pp'$, that is, $e(\pp,\pp')$ is
the minimum of the numbers of crossings of curves $\alpha$ and $\alpha'$, where
$\alpha$ is isotopic to $\pp$ and $\alpha'$ is isotopic to $\pp'$. Two
arcs $\pp,\pp'$ are called \emph{non-crossing} if $e(\pp,\pp')=0$. A
\emph{triangulation} is a maximal collection of non-crossing arcs. The arcs of a
triangulation cut the surface into \emph{triangles}. Since $(S,M)$ is an
unpunctured surface, the three sides of each triangle are distinct (in contrast
to the case of surfaces with punctures). A triangle in $T$ is called an
\emph{internal triangle} if none of its sides are a boundary segment. We often
refer to the triple $(S,M,T)$ as a \emph{triangulated surface}.  

\subsection{Jacobian algebras from surfaces}
Let $Q=(Q_0,Q_1,s,t)$ be a quiver with vertex set $Q_0$, $Q_1$ the arrow set,
and $s,t\colon Q_1\to Q_0$ are maps that assign to each arrow $\alpha$ its
source $s(\alpha)$ and target $t(\alpha)$. For $v,v'\in Q_0$, we let $Q_1(v,v')$
denote the set of arrows from $v$ to $v'$.

If $T=\{\te1,\te2,\ldots,\te n\}$ is a triangulation of an unpunctured
surface $(S,M)$, we define a quiver $Q_T$ as follows. Each arc in $T$
corresponds to a vertex of $Q_T$. We will denote the vertex corresponding to
$\te i$ simply by $i$. The number of arrows from $i$ to $j$ is the number of
triangles $\triangle$ in $T$ such that the arcs $\te i,\te j$ form two sides
of $\triangle$, with $\te j$ following $\te i$ when going around the triangle
$\triangle$ in the \orientationofthearrow orientation, see 
Figure~\ref{fig quiver} for an example. For clarity we suppress the $\te{}$ notation 
when there is no possibility of confusion.  Note that the interior triangles in $T$ 
correspond to certain oriented 3-cycles in $Q_T$. 
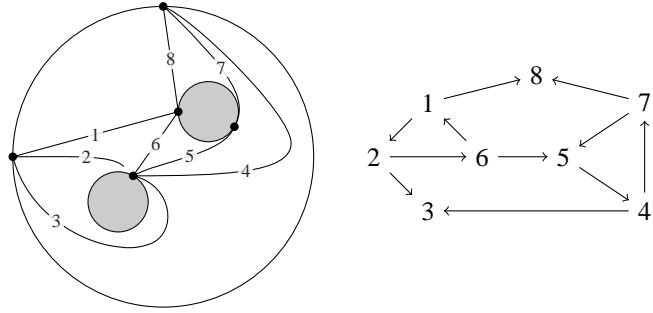
\begin{figure}
\centering

\begin{tikzpicture}[scale=.4]
{ []
\draw (0,0) circle[radius=5cm] ;
\filldraw[fill=black!20] (1.5,1.5) circle[radius=1cm] ;
\filldraw[fill=black!20] (-1.5,-1.5) circle[radius=1cm] ;

\foreach \x [count=\n] in {0,1,...,11}{
	\node[name=p\n] at ($(1.5,1.5)+(30*\x:1cm)$) {};
	\node[name=n\n] at ($(-1.5,-1.5)+(30*\x:1cm)$) {};
	\node[name=o\n] at (30*\x:5cm) {};
}

\node[solid] at (90:5cm) {};
\node[solid] at (180:5cm) {};
\node[solid] at ($(-1.5,-1.5)+(60:1cm)$) {};
\node[solid] at ($(1.5,1.5)+(180:1cm)$) {};
\node[solid] at ($(1.5,1.5)+(-30:1cm)$) {};

\draw 
  (o7.center) -- (p7.center) 
    node[int] {1} 
  (o7.center) ..controls +(0:2cm) and +(135:1cm) .. (n3) 
    node[int] {2} 
  (o7.center) .. controls +(-75:2cm) and +(170:1cm) .. ($(o9)!.75!(n10)$) 
    node[int] {3}
  ($(o9)!.75!(n10)$) .. controls +(-10:.5cm) and +(200:.5cm) .. 
    ($(n12)!.3!(o10)$) 
  ($(n12)!.3!(o10)$) .. controls +(20:1cm) and (-70:1cm) .. (n3.center)
  (p12.center) .. controls +(60:1cm) and +(-60:.5cm) .. ($(p4)!.33!(o3)$) 
    node[int,pos=.9] {7}
  ($(p4)!.33!(o3)$) .. controls +(130:.cm) and +(-40:1cm) .. (o4.center) 
  (n3.center) .. controls +(0:3cm) and +(-90:1cm) .. ($(p1)!.7!(o1)$) 
    node[int] {4}
  ($(p1)!.7!(o1)$) .. controls +(90:1cm) and +(-20:1cm) .. (o4.center) 
  (o4.center) -- (p7.center) node[int] {8} 
  (p7.center) -- (n3.center)  node[int] {6}
  (p12.center) .. controls +(-110:1cm) and +(30:1cm) .. (n3.center)  
    node[int] {5}; 
}
{[xshift=7cm,scale=1.8]
	\node[name=1] at (1,1) {1};
	\node[name=2] at (0,0) {2};
	\node[name=3] at (1,-1) {3};
	\node[name=4] at (5,-1) {4};
	\node[name=5] at (3.5,0) {5};
	\node[name=6] at (2,0) {6};
	\node[name=7] at (5,1) {7};
	\node[name=8] at (3,1.5) {8};
	\draw[->] (1) edge (2) edge (8)
		(2) edge (6) edge (3)
		(6) edge (1) edge (5) 
		(5) edge (4)
		(4) edge (7) edge (3)
		(7) edge (5) edge (8);
}
\end{tikzpicture}
\caption{A triangulation and its quiver} \label{fig quiver}
\end{figure}   

Following \cite{ABCP,LF}, let $W$ be the sum of all oriented 3-cycles in $Q_T$
coming from internal triangles. Then $W$ is a potential, in the sense of
\cite{DWZ}, which gives rise to to a Jacobian algebra $\alg_T=
\textup{Jac}(Q_T,W)$, which is defined as the quotient of the path algebra of
the quiver $Q_T$ by the two-sided ideal $I_T$ generated by the subpaths of length two
of each oriented 3-cycle in $Q_T$.

\subsection{Cutting a surface}

Let $(S,M)$ be a surface without punctures, $T$ a triangulation, $Q_T$ the
corresponding quiver, and $\alg_T$ the Jacobian algebra. Throughout this
section, we assume that, if $S$ is a disc, then $M$ has at least $5$ marked
points, thus we exclude the disc with $4$ marked points.  
\begin{dfn}
Recall that the interior triangles of $T$ distinguish certain oriented
$3$-cycles in the quiver $Q_T$.  Let $\intri$ denote the set of 
internal triangles of $(S,M,T)$.  We define an \df{admissible cut} of $T$ 
to be a function $\cutt\colon \intri \to M$ that selects a vertex in each
internal triangle of $T$. 
\end{dfn}

In addition to selecting a marked point $\cutt(\triangle)=v$ 
on the surface, this map also distinguishes the two edges $\te i$ and $\te j$ in $T$ 
incident to $v$ in $\triangle$.  We call the image of $\cutt$ in $\triangle$ a \df{local cut}
of $(S,M,T)$, denoted $\cut vij$ or $\cutt_{i,j}$ when there is no cause for confusion.  
We will always write $\cutt_{i,j}$ when the corresponding arrow is $i\to j$ in $Q_T$. 
Graphically, we will denote a local cut in $(S,M,T)$ by bisecting the marked point 
$\cutt(\triangle)$ between the corresponding edges $\te i$ and $\te j$, see 
Figure~\ref{fig alt presentation}.  The decorated surface corresponding to $\cutt$ is 
denoted $(S,M\dag,T\dag)$. 

\begin{dfn}
Note that a local cut of $(S,M,T)$ distinguishes an arrow in the quiver $Q_T$ associated
to the triangulation. Let $\cutt$ be an admissible cut of $(S,M,)$.  By an abuse of notation, 
let $\cutt$ also denote this collection of arrows, then we define the \df{surface algebra} 
$\alg_{T\dag}$ of type $(S,M)$ associated to $\cutt$ to be the quotient 
$\kk Q_T/\langle I_T \cup \cutt\rangle$, we let $I_{T\dag}$ denote the corresponding ideal
of relations on $Q_{T\dag}$.
\end{dfn}

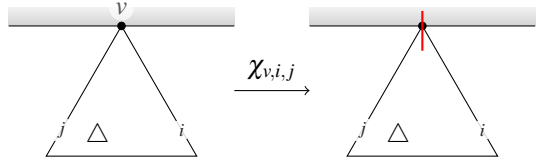
\begin{figure}
\centering

\begin{tikzpicture}
 [lbl/.style={fill=white,opacity=.66,shape=circle,inner sep=1,outer sep =2}]
{[]
\shade[shading=axis,bottom color=black!20, top color=black!5]  
    (0,0) rectangle (3,.25);
\draw (0,0) -- (3,0) node[solid,pos=.5,scale=1,name=v] {};
\node[above,lbl] at (v) {$v$};
\draw (v.center) -- ++(-60:2cm) node[int,pos=.8] {$i$}
	-- ++(180:2cm) node[above,pos=.66] {$\triangle$}
	-- (v.center) node[int,pos=.2] {$j$};
}
\draw[->] (3,-.86) -- (4,-.86) node[above,pos=.5] {$\cut{v}{i}{j}$};
{[xshift=4cm]
\shade[shading=axis,bottom color=black!20, top color=black!5]  
    (0,0) rectangle (3,.25);
\draw (0,0) -- (3,0) node[solid,pos=.5,scale=1,name=v] {};
\draw (v.center) -- ++(-60:2cm) node[int,pos=.8] {$i$}
	-- ++(180:2cm) node[above,pos=.66] {$\triangle$}
	-- (v.center) node[int,pos=.2] {$j$};
\draw[thick,red] ($(v)+(90:.2)$) -- ($(v)+(270:.33)$);
}
\end{tikzpicture}
\caption{The graphical notation for the $\cut vij$.}
\label{fig alt presentation}
\end{figure}

\begin{ex}
Here we present an admissible cut of the surface $(S,M,T)$ in Figure~\ref{fig quiver}
and the associated quiver of $\alg_{T\dag}$. 

\[\begin{tikzpicture}[scale=.35]
{ []
\draw (0,0) circle[radius=5cm] ;
\filldraw[fill=black!20] (1.5,1.5) circle[radius=1cm] ;
\filldraw[fill=black!20] (-1.5,-1.5) circle[radius=1cm] ;

\foreach \x [count=\n] in {0,1,...,11}{
	\node[name=p\n] at ($(1.5,1.5)+(30*\x:1cm)$) {};
	\node[name=n\n] at ($(-1.5,-1.5)+(30*\x:1cm)$) {};
	\node[name=o\n] at (30*\x:5cm) {};
}

\node[solid] at (90:5cm) {};
\node[solid] at (180:5cm) {};
\node[solid] at ($(-1.5,-1.5)+(60:1cm)$) {};
\node[solid] at ($(1.5,1.5)+(180:1cm)$) {};
\node[solid] at ($(1.5,1.5)+(-30:1cm)$) {};

\draw 
  (o7.center) -- (p7.center) 
    node[int] {1} 
  (o7.center) ..controls +(0:2cm) and +(135:1cm) .. (n3) 
    node[int] {2} 
  (o7.center) .. controls +(-75:2cm) and +(170:1cm) .. ($(o9)!.75!(n10)$) 
    node[int] {3}
  ($(o9)!.75!(n10)$) .. controls +(-10:.5cm) and +(200:.5cm) .. 
    ($(n12)!.3!(o10)$) 
  ($(n12)!.3!(o10)$) .. controls +(20:1cm) and (-70:1cm) .. (n3.center)
  (p12.center) .. controls +(60:1cm) and +(-60:.5cm) .. ($(p4)!.33!(o3)$) 
    node[int,pos=.9] {7}
  ($(p4)!.33!(o3)$) .. controls +(130:.cm) and +(-40:1cm) .. (o4.center) 
  (n3.center) .. controls +(0:3cm) and +(-90:1cm) .. ($(p1)!.7!(o1)$) 
    node[int] {4}
  ($(p1)!.7!(o1)$) .. controls +(90:1cm) and +(-20:1cm) .. (o4.center) 
  (o4.center) -- (p7.center) node[int] {8} 
  (p7.center) -- (n3.center)  node[int] {6}
  (p12.center) .. controls +(-110:1cm) and +(30:1cm) .. (n3.center)  
    node[int] {5}; 

\draw[rcut]
  ($(n3)+(90:.75cm)$) -- ($(n3)-(90:.5cm)$)
  ($(p12)+(330:.75cm)$) -- ($(p12)-(330:.5cm)$);
  
}
{[xshift=7cm,scale=1.8]
	\node[name=1] at (1,1) {1};
	\node[name=2] at (0,0) {2};
	\node[name=3] at (1,-1) {3};
	\node[name=4] at (5,-1) {4};
	\node[name=5] at (3.5,0) {5};
	\node[name=6] at (2,0) {6};
	\node[name=7] at (5,1) {7};
	\node[name=8] at (3,1.5) {8};
	\draw[->] (1) edge (2) edge (8)
		(2)  edge (3)
		(6) edge (1) edge (5) 
		(5) edge (4)
		(4) edge (7) edge (3)
		(7) edge (8);
}
\end{tikzpicture}\]
\end{ex}

See \cite{DS} for a complete description of surface algebras arising from admissible 
cuts in terms of partially triangulated surfaces and partial cluster-tilting objects.

\section{Graded Equivalence}
Ultimately, we are interested in describing the derived equivalence classes of
surface algebras. To this end, we are led to investigate graded equivalences of
graded algebras because of a theorem of Amiot and Oppermann in \cite[Theorem
5.6]{AO} showing a strong connection between the two types of equivalences.

In this section we introduce the concept of graded equivalence and seek to give
our first criteria for graded equivalence of surface algebras.

\subsection{Graded algebras}
We will only consider $\ZZ$-graded algebras, however, the following definitions
can be re-stated for any group $G$, as in \cite{GM}.  We will simply refer to 
$\ZZ$-gradings as gradings.

A \df{weight function} on $Q$ is a function $w\colon Q_1\to \ZZ$, that is, a
function that assigns an integer to each arrow of $Q$. We can naturally extend
the weight function to paths in $Q$, by setting $w(e_i)=0$ for each stationary
path in $Q$ and $w(\alpha_1\cdots\alpha_r)= w(\alpha_1)+\cdots+w(\alpha_r)$ for
each path in $Q$ with length $r\geq 1$. This induces a grading on $kQ$
with $kQ=\bigoplus_{p\in \ZZ} kQ^p$, where $kQ^p$ is generated by the set of
paths with weight $p$. A relation $r$ is homogeneous of degree $p$ if 
$r\in kQ^p$ for some $p$. The grading induced by $w$ gives a grading on $kQ/I$ 
if and only if $I$ is generated by homogeneous relations, not necessarily all 
of the same degree.

Let $\alg = \bigoplus_{p\in\ZZ} \alg^p$ be a graded algebra. As in
\cite{GM}, we denote by $\gr \alg$ the category of finitely generated graded
modules over $\alg$. For a graded module $M=\bigoplus_{p\in\ZZ} M^p$, we define
$M\langle q\rangle := \bigoplus_{p\in\ZZ} M^{p+q}$. That is, the $p$ graded part
of $M\langle q\rangle$ is the $p+q$ graded part of $M$. 

We use this grading shift to define a new category that will, in some ways, take
on the role of the derived category. Of course, this new category is relatively
simpler. 

\begin{dfn} Given a graded algebra $\alg=kQ/I$ induced by a weight $w$, we
define the covering of $\alg$
  \[\Cov(\alg):= \add\left\{ \alg\langle p\rangle : p\in \ZZ\right\} 
      \subseteq \gr \alg.\]
Let $F\colon \gr \alg \to \cmod \alg$ be the functor that forgets the grading.
We associate to $\Cov(\alg)$ the quiver with relations $(Q^*,I^*)$ defined by  
\begin{align*} 
	Q^*_0 &= Q_0\times \ZZ,\\
	Q^*_1((v,i),(v',j)) &= \{ \alpha\in Q_1(v,v') : w(\alpha) = j-i\}. 
\end{align*}
Note that $Q^*$ is infinite. The map $F$ induces a projection $Q^*\to Q$, we 
will also refer to this as $F$. We define the relations on $Q^*$ by 
$\rho \in I^*$ if $F(\rho)\in I$.  We partition the vertices of $Q^*$ into
\df{levels} where $(v,i)$ is of level $i$. If $w(\alpha)>0$, we refer to 
the copies of $\alpha$ in $Q^*$ as \df{bridge arrows}, these arrows connect
different levels of $Q^*$. 
\end{dfn}

From \cite[Theorem 0.1]{GM} we have,
\begin{prop} Let $\alg$ be a finite dimensional graded algebra and $(Q,I)$ a 
quiver with relations and weight $w$ such that $\alg \cong kQ/I$ and the 
grading on $\alg$ is induced by $w$, then
$\cmod kQ^*/I^* \cong \cmod \Cov(\alg) \cong \gr \alg$.
\end{prop}

Additionally, we recall from \cite[Theorem 2.11]{AO},
\begin{prop}\label{prop:graded equiv}
Let $\alg$ be an algebra with two different gradings.  We denote by
$\Cov(\alg_1)$ the covering corresponding to the first grading, and 
$\Cov(\alg_2)$ the covering corresponding to the second grading. 
Then the following are equivalent:
\begin{enum}
\item There is an equivalence $U\colon\cmod\Cov(\alg_1) 
\xrightarrow{\ \sim \ } \cmod\Cov(\alg_2)$ such that the following 
diagram commutes.

\[\begin{tikzpicture}[xscale=2]
\node[name=a] at (0,0) {$\cmod\Cov(\alg_1)$};
\node[name=b] at (3,0) {$\cmod\Cov(\alg_2)$};
\node[name=c] at (1.5,-1) {$\cmod \alg$};
\path[->] (a) edge node[above]{$U$} (b) (a) edge (c) (b) edge (c);
\end{tikzpicture}\]
\item There exist a map $r\colon Q_0\to \ZZ$ with $r(i) = r_i$ and an isomorphism 
of graded algebras
\[f\colon\alg_2 \xrightarrow{\ \sim \ }\bigoplus_{p\in \ZZ}
    \Hom_{\Cov(\alg_1)}\left(\bigoplus_{i=1}^n P_i\langle r_i\rangle,
                          \bigoplus_{i=1}^n P_i\langle r_i+p\rangle\right)
\]
where $\alg_1\cong \bigoplus_{i=1}^n P_i$ in $\gr\alg_1$.
\end{enum}
In this case we say that the gradings are equivalent.
\end{prop}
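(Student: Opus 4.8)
The plan is to read this as a Morita-theoretic statement about the two covering categories $\Cov(\alg_1)$ and $\Cov(\alg_2)$. Both are $k$-linear Krull--Schmidt categories whose indecomposable objects are the graded shifts $P_i\langle q\rangle$ of the finitely many indecomposable graded projectives, and by the previous proposition $\cmod\Cov(\alg_j)\cong\gr\alg_j$. The essential structural fact is that both coverings lie over the single ungraded category $\add\alg\subseteq\cmod\alg$ via the forgetful functor $F$, which sends every shift $P_i\langle q\rangle$ to the same ungraded projective $\bar P_i$, independent of $q$. Since equivalences between module categories of additive categories are induced by equivalences of the underlying categories (they must preserve the representable projectives), an $F$-compatible $U$ should be exactly the same data as an $F$-compatible identification of progenerators — and that identification is what the graded isomorphism $f$ of (2) records.

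To prove $(2)\Rightarrow(1)$, I would set $T:=\bigoplus_{i=1}^n P_i\langle r_i\rangle\in\Cov(\alg_1)$. Because the shifts of the $P_i$ generate $\Cov(\alg_1)$, the object $T$ is a progenerator, and its graded endomorphism algebra is precisely $\bigoplus_{p\in\ZZ}\Hom_{\Cov(\alg_1)}(T,T\langle p\rangle)$, which $f$ identifies with $\alg_2$ as graded algebras. Standard (graded) Morita theory then produces an equivalence $U:=\Hom_{\Cov(\alg_1)}(T,-)\colon\cmod\Cov(\alg_1)\xrightarrow{\ \sim\ }\cmod\Cov(\alg_2)$, where the target is identified with graded $\alg_2$-modules through $f$. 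Forgetting the grading sends $T$ to $\bigoplus_i\bar P_i\cong\alg$, so the two composites $\cmod\Cov(\alg_j)\to\cmod\alg$ agree after applying $U$; this is the commuting triangle.

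For $(1)\Rightarrow(2)$, suppose $U$ is an $F$-compatible equivalence, induced by an equivalence $G\colon\Cov(\alg_2)\xrightarrow{\ \sim\ }\Cov(\alg_1)$ of the underlying covering categories. Then $G$ carries the indecomposable object $P_i$ of $\Cov(\alg_2)$ to some shift $P_{\sigma(i)}\langle r_i\rangle$ of $\Cov(\alg_1)$. The commuting triangle forces $F\circ G=F$ on objects, and since $F$ collapses all shifts of $P_{\sigma(i)}$ to the single ungraded projective $\bar P_{\sigma(i)}$, this pins $\sigma$ down as a bijection matching the common indexing of the indecomposable projectives of the underlying algebra $\alg$; after re-indexing we may take $\sigma=\mathrm{id}$ and define $r(i)=r_i$. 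As $G$ is fully faithful and, by $F$-compatibility, intertwines the level-shift autoequivalence $\langle 1\rangle$ on the two sides, the graded endomorphism algebra of the generator $\bigoplus_i P_i$ of $\Cov(\alg_2)$ — namely $\alg_2$ — is carried isomorphically onto $\bigoplus_{p}\Hom_{\Cov(\alg_1)}\bigl(\bigoplus_i P_i\langle r_i\rangle,\bigoplus_i P_i\langle r_i+p\rangle\bigr)$, which is the graded isomorphism $f$.

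I expect the crux to lie in the grading bookkeeping of the second direction: verifying that $F$-compatibility forces $G$ to commute up to natural isomorphism with the shift functor $\langle 1\rangle$, so that the reconstructed $\Hom$-algebra is genuinely \emph{graded} isomorphic to $\alg_2$, not merely isomorphic after forgetting the grading. Establishing that the only freedom left by the forgetful projection is a single integer shift $r_i$ at each vertex — and that no further twist of the grading on a fixed projective can intervene — is where the argument must be most careful; once this rigidity is in place, the remainder is a routine application of Morita theory to the covering categories.
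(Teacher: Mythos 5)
A preliminary remark: the paper contains no proof of this proposition at all; it is recalled from \cite[Theorem 2.11]{AO}, so there is no argument of the paper to compare yours against, and your attempt has to stand on its own. Your direction $(2)\Rightarrow(1)$ is essentially sound: for $T=\bigoplus_i P_i\langle r_i\rangle$ one has $\add\{T\langle p\rangle : p\in\ZZ\}=\Cov(\alg_1)$, so $f$ identifies $\Cov(\alg_2)$ with $\Cov(\alg_1)$ compatibly with the shift, and the commuting triangle follows from the covering formula $\Hom_\alg(FX,FY)\cong\bigoplus_{p\in\ZZ}\Hom_{\gr\alg_1}(X,Y\langle p\rangle)$, which in particular identifies the ungraded algebra underlying $\bigoplus_p\Hom_{\Cov(\alg_1)}(T,T\langle p\rangle)$ with $\End_\alg(\alg)\cong\alg$.

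The genuine gap is in $(1)\Rightarrow(2)$, at exactly the point you flag as the crux and then assert rather than prove: that compatibility with the forgetful functors forces the restricted equivalence $G$ on the coverings to intertwine the shift $\langle 1\rangle$. This is false. The grading-negation functor $N$, defined by $N(M)^p:=M^{-p}$, is an equivalence from graded modules over $(\alg,d)$ to graded modules over $(\alg,-d)$ which commutes strictly with the forgetful functors, yet on projectives it sends $P_i\langle q\rangle$ to $P_i\langle -q\rangle$, so it \emph{anti}-commutes with $\langle 1\rangle$; similarly, when the covering is disconnected its components can be permuted at will without disturbing $F$. Worse, negation shows that clause (1), read literally as stated here, does not even imply (2): take $Q$ with two vertices, arrows $\alpha\colon 1\to 2$ and $\beta\colon 2\to 1$, relations $\alpha\beta=\beta\alpha=0$, and the gradings $d(\alpha)=1,\ d(\beta)=0$ and $-d$. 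Negation gives an $F$-compatible equivalence of the two covering module categories, but no $r$ and no graded isomorphism as in (2) can exist, because the sum of the degrees of the two arrows (equivalently the total weight of the $2$-cycle) is unchanged by the vertex shifts $r_i$ (the contributions $r_i-r_j$ telescope around a cycle) and by any graded algebra isomorphism, yet it equals $+1$ on the right-hand side of (2) and $-1$ for $\alg_2$. So the proposition is only correct when (1) is understood, as in the covering formalism of \cite{AO}, to require $U$ to commute with the degree-shift functors as well; with that hypothesis your outline does complete (the shift-equivariance you need becomes an assumption, the $r_i$ come from uniqueness of graded lifts of indecomposable projectives, and full faithfulness yields $f$), but as written your argument rests on a rigidity claim that is both unproven and untrue, and filling it is precisely the content of the theorem.
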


\begin{remark}\label{remark:equivviaid}
The isomorphism $f\colon\alg_2 \xrightarrow{\ \sim \ }\bigoplus_{p\in \ZZ}
    \Hom_{\Cov(\alg_1)}\left(\bigoplus_{i=1}^n P_i\langle r_i\rangle,
                          \bigoplus_{i=1}^n P_i\langle r_i+p\rangle\right)$
may arise by first applying a automorphism to the quiver $Q^*$ of $\Cov(\alg_1)$.
The simplest case to consider is when this automorphism is the identity on $Q^*$,
when this happens, the graded equivalence can
be checked via purely combinatorial methods involving the quiver $Q^*$
associated to $\Cov(\alg_1)$. In particular, let $w_2$ be the weight determined
by the grading of $\alg_2$, then verifying (2) reduces to finding vertices
$(v,i)$ and $(v',j)$ of $Q^*$ such that if $\alpha\colon v\to v'$ and
$w_2(\alpha)=k$, there is an arrow $(v,i) \to (v',j+k)$ in $Q^*$.  Then we can 
define the map $r$ such that $r(v) = i$ and $r(v') = j$. We will use this fact
in the proof of the main theorem. The algebras $\alg_1$ and $\alg_2$ are graded
equivalent if such a choice can be made simultaneously for each vertex. For
brevity we will later refer to this as being \df{graded equivalent via the
identity}.  Notice that we must have $j>i$, because $\alpha$ is a bridge arrow, 
which by definition must always point in an increasing direction.
See example~\ref{ex:pants}.
\end{remark}

We will \emph{not} consider the surfaces algebras as graded algebras.  However, 
the cut defining a surface algebra does induce a grading on the algebra coming 
from the original triangulation.
\begin{dfn}\label{dfn:weight}
Let $\alg$ be a surface algebra coming from an admissible cut of $(S,M,T)$.
Let $\tilde \alg$ denote the Jacobian algebra coming from $(Q_T,W)$ with a
grading given by the weight
\[ w(\alpha) = \begin{cases} 
                0 & \text{ if } \alpha\in Q_T\cap Q_T\dag,\\
                1 & \text{ if } \alpha\in Q_T\setminus Q_T\dag.
                \end{cases} \]
This weight is homogeneous for all relations in $(Q_T,W)$, hence
it induces a grading on $\tilde \alg$.
\end{dfn}

\subsection{Graded equivalence and surface algebras}\label{subsec:gradequiv}
In this section we describe when two surface algebras are graded equivalent via
the identity.  To that end we begin by finding the required integers $r_i$, as 
in Proposition~\ref{prop:graded equiv} and Remark~\ref{remark:configurations}, 
for those vertices corresponding to edges in $(S,M,T)$ incident to a cut.  
Throughout we fix two different admissible cuts $\chi_1$ and $\chi_2$ of 
$(S,M,T)$ with $Q_i$ the corresponding cut quivers, $\alg_i$ the corresponding 
surface algebras, $\tilde\alg_i$ the corresponding graded Jacobian algebras, and 
$Q^*$ the quiver of $\Cov(\tilde \alg_1)$.

\begin{dfn}
Given a pair of cuts $(\chi_1,\chi_2)$ let $\{\te{i_1},\dots,\te{i_k}\}$ be the 
set of edges in $(S,M,T)$ such that $\te{i_\ell}$ is the edge of a triangle in 
which $\chi_1$ and $\chi_2$ differ and $\te{i_\ell}$ is incident to both cuts. 
We call the edges in $\{\te{i_1},\dots,\te{i_k}\}$ \df{sliding edges}. Notice 
that there is at most one sliding edge for each internal triangle of $(S,M,T)$. 
Additionally, each sliding edge is associated with at least one internal triangle; 
however, there may be sliding edges $\te i$ associated with two different 
triangles. When necessary we may distinguish between the different types of 
sliding edges as \df{one-sliding} and \df{two-sliding} edges, respectively.
\end{dfn}

\begin{remark}\label{remark:configurations}
Recall that the local cut $\cutt_{i,j}$ denotes the cut which removes the arrow
$i\to j$. Let $\tilde \alg_1\sim \tilde \alg_2$ be graded equivalent via the
identity. By considering the orientation of the arrows which are cut and
definition~\ref{dfn:weight} of the weight given by a cut, we give an explicit
formula for determining the function $r$ from Proposition~\ref{prop:graded
equiv} (2) on triangles containing sliding edges. Since the weight of an arrow
is at most 1, the value of $r$ can only differ by one near sliding edges. We
first consider triangles where $\te i$ is a two-sliding edge, so there are
internal triangles $\triangle = \te i\te j \te k$ and $\triangle'=\te i\te j'\te
k'$. For $\te i$ to be a two sliding edge, when we restrict to $\triangle$ and
$\triangle'$, we must have
\[ 
(\text{a})\ \chi_1=\chi_{ki}\chi_{ij'}\text{ and }\chi_2=\chi_{ij}\chi_{k'i} \quad\text{or}
  \quad (\text{b})\ \chi_1=\chi_{ij}\chi_{ij'}\text{ and }\chi_2=\chi_{ki}\chi_{k'i},
\]
see Figure~\ref{fig:proof2sliding}. If we let $r(i)$ be any integer, then a
graded equivalence via the identity implies that we must have 
$r(\ell) = r(i)+1$ for $\ell=j',k'$ and $r(\ell) = r(i)-1$ for $\ell=j,k$
in the first case, in the second case we must have 
$r(\ell)=r(i)+1$ for $\ell = j,j',k,k'$. 
In both cases, the full subquiver on the $P_\ell\langle r_\ell\rangle$ in 
$Q^*$ contains the bridge arrows associated to $\chi_1$.

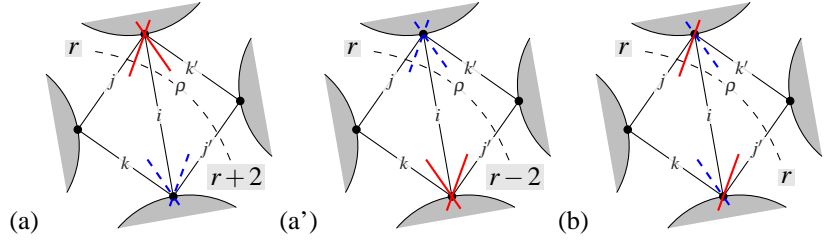
\begin{figure}[h]
\centering
(a)
\begin{tikzpicture}[scale=.8]
\foreach \n/\a in {a/0,b/90,c/180,d/270}{
\filldraw[fill=gray!50,rotate=\a+10] 
    (0+30:2) .. controls +(230:.75) and +(130:.75) .. (0-30:2)
    node[solid,name=\n,pos=.5] {};    
}
\draw (b) -- (d) node[int] {$i$}
      (b) -- (c) node[int] {$j$}
      (c) -- (d) node[int] {$k$}
      (a) -- (b) node[int] {$k'$}
      (a) -- (d) node[int] {$j'$};
\draw[red,thick] 
	  ($(b)+(70:.25)$) -- ($(b)+(250:.75)$)
	  ($(b)+(125:.25)$) -- ($(b)+(305:.75)$);
\draw[blue,thick,dashed]
	  ($(d)+(70:.75)$) -- ($(d)+(250:.25)$)
	  ($(d)+(125:.75)$) -- ($(d)+(305:.25)$);
\path (135:1.5) edge[bend left=30,dashed] node[int] {$\rho$} 
      node[R,pos=-.1] {$r$} 
      node[R,pos=1.05] {$r+2$} (325:1.5);
\end{tikzpicture}
(a')
\begin{tikzpicture}[scale=.8]
\foreach \n/\a in {a/0,b/90,c/180,d/270}{
\filldraw[fill=gray!50,rotate=\a+10] 
    (0+30:2) .. controls +(230:.75) and +(130:.75) .. (0-30:2)
    node[solid,name=\n,pos=.5] {};    
}
\draw (b) -- (d) node[int] {$i$}
      (b) -- (c) node[int] {$j$}
      (c) -- (d) node[int] {$k$}
      (a) -- (b) node[int] {$k'$}
      (a) -- (d) node[int] {$j'$};
\draw[blue,thick,dashed] 
	  ($(b)+(70:.25)$) -- ($(b)+(250:.75)$)
	  ($(b)+(125:.25)$) -- ($(b)+(305:.75)$);
\draw[red,thick]
	  ($(d)+(70:.75)$) -- ($(d)+(250:.25)$)
	  ($(d)+(125:.75)$) -- ($(d)+(305:.25)$);
\path (135:1.5) edge[bend left=30,dashed] node[int] {$\rho$} 
      node[R,pos=-.1] {$r$} 
      node[R,pos=1.05] {$r-2$} (325:1.5);
\end{tikzpicture}
(b)
\begin{tikzpicture}[scale=.8]
\foreach \n/\a in {a/0,b/90,c/180,d/270}{
\filldraw[fill=gray!50,rotate=\a+10] 
    (0+30:2) .. controls +(230:.75) and +(130:.75) .. (0-30:2)
    node[solid,name=\n,pos=.5] {};    
}
\draw (b) -- (d) node[int] {$i$}
      (b) -- (c) node[int] {$j$}
      (c) -- (d) node[int] {$k$}
      (a) -- (b) node[int] {$k'$}
      (a) -- (d) node[int] {$j'$};
\draw[red,thick] 
	  ($(b)+(70:.25)$) -- ($(b)+(250:.75)$)
	  ($(d)+(70:.75)$) -- ($(d)+(250:.25)$);
\draw[blue,thick,dashed]
	  ($(b)+(125:.25)$) -- ($(b)+(305:.75)$)
	  ($(d)+(125:.75)$) -- ($(d)+(305:.25)$);
\path (135:1.5) edge[bend left=30,dashed] node[int] {$\rho$} 
      node[R,pos=-.1] {$r$} 
      node[R,pos=1.05] {$r$} (325:1.5);
\end{tikzpicture}
\caption{Configurations for two-sliding edges and the corresponding
 choices for $r_i$.  The solid red line represents $\chi_1$, the dashed 
 blue line for $\chi_2$. The dual configuration of ($b$) does not change the value
 of $r$.}\label{fig:proof2sliding}
\end{figure}

Now we consider the triangles $\te i\te j \te k$ where $\te i$ is an one-sliding edge. 
Then we must have 
\[
(\text{a})\ \chi_1=\chi_{ij} \text{ and } \chi_2=\chi_{ki},\quad\text{or}\quad
(\text{b})\ \chi_1=\chi_{ki} \text{ and } \chi_2=\chi_{ij},
\]
see Figure~\ref{fig:proof1sliding}.
If we let $r(i)$ be any integer, then in the first case we must choose $r(\ell) =
r(i)+1$ for $\ell=j,k$. In the second case, $r(\ell)=r(i)-1$ for $\ell = j,k$.
Again, in both cases the full subquiver on the $P_\ell\langle r_\ell\rangle$
contains the bridge arrow associated to $\chi_1$.

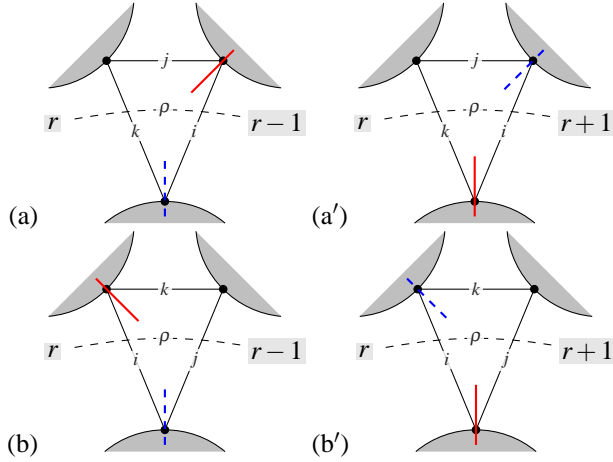
\begin{figure}[h]
\centering
(a)
\begin{tikzpicture}[scale=.8]
\foreach \n/\a in {a/45,b/135,c/270}{
\filldraw[fill=gray!50,rotate=\a] 
    (0+30:2) .. controls +(230:.75) and +(130:.75) .. (0-30:2)
    node[solid,name=\n,pos=.5] {};    
}
\draw (a) -- (c) node[int] {$i$}
      (a) -- (b) node[int] {$j$}
      (b) -- (c) node[int] {$k$};
\draw[blue,thick,dashed] 
	  ($(c)+(270:.25)$) -- ($(c)+(90:.75)$);
\draw[red,thick]
	  ($(a)+(45:.25)$) -- ($(a)+(225:.75)$);
\path[dashed] (-1.5,0) edge[bend left=10] node[int] {$\rho$}
              node[R,pos=-.1] {$r$}
              node[R,pos=1.1] {$r-1$} (1.5,0) ;
\end{tikzpicture}
(a$'$)
\begin{tikzpicture}[scale=.8]
\foreach \n/\a in {a/45,b/135,c/270}{
\filldraw[fill=gray!50,rotate=\a] 
    (0+30:2) .. controls +(230:.75) and +(130:.75) .. (0-30:2)
    node[solid,name=\n,pos=.5] {};    
}
\draw (a) -- (c) node[int] {$i$}
      (a) -- (b) node[int] {$j$}
      (b) -- (c) node[int] {$k$};
\draw[red,thick] 
	  ($(c)+(270:.25)$) -- ($(c)+(90:.75)$);
\draw[blue,thick,dashed]
	  ($(a)+(45:.25)$) -- ($(a)+(225:.75)$);
\path[dashed] (-1.5,0) edge[bend left=10] node[int] {$\rho$}
              node[R,pos=-.1] {$r$}
              node[R,pos=1.1] {$r+1$} (1.5,0) ;
\end{tikzpicture}

(b)
\begin{tikzpicture}[scale=.8]
\foreach \n/\a in {a/45,b/135,c/270}{
\filldraw[fill=gray!50,rotate=\a] 
    (0+30:2) .. controls +(230:.75) and +(130:.75) .. (0-30:2)
    node[solid,name=\n,pos=.5] {};    
}
\draw (a) -- (c) node[int] {$j$}
      (a) -- (b) node[int] {$k$}
      (b) -- (c) node[int] {$i$};
\draw[blue,thick,dashed] 
	  ($(c)+(270:.25)$) -- ($(c)+(90:.75)$);
\draw[red,thick]
	  ($(b)+(135:.25)$) -- ($(b)+(315:.75)$);
\path[dashed] (-1.5,0) edge[bend left=10] node[int] {$\rho$}
              node[R,pos=-.1] {$r$}
              node[R,pos=1.1] {$r-1$} (1.5,0) ;
\end{tikzpicture}
(b$'$)
\begin{tikzpicture}[scale=.8]
\foreach \n/\a in {a/45,b/135,c/270}{
\filldraw[fill=gray!50,rotate=\a] 
    (0+30:2) .. controls +(230:.75) and +(130:.75) .. (0-30:2)
    node[solid,name=\n,pos=.5] {};    
}
\draw (a) -- (c) node[int] {$j$}
      (a) -- (b) node[int] {$k$}
      (b) -- (c) node[int] {$i$};
\draw[red,thick] 
	  ($(c)+(270:.25)$) -- ($(c)+(90:.75)$);
\draw[blue,thick,dashed]
	  ($(b)+(135:.25)$) -- ($(b)+(315:.75)$);
\path[dashed] (-1.5,0) edge[bend left=10] node[int] {$\rho$}
              node[R,pos=-.1] {$r$}
              node[R,pos=1.1] {$r+1$} (1.5,0) ;
\end{tikzpicture}
\caption{Configurations for one-sliding edges and the corresponding choices
for $r_i$. The solid red line represents $\chi_1$, the dashed blue line for 
$\chi_2$.}
\label{fig:proof1sliding}
\end{figure}
\end{remark}

It remains to determine the appropriate value of $r$ for the non-sliding edges 
not contained in a triangle with a sliding edge.  Recall that $Q^*$ consists of 
infinitely many copies of $Q_1$ connected by arrows $i\to j$ for each local cut 
$\chi_{ij}$ in $\chi$, we refer to each copy of $Q_1$ as a \df{level} of $Q^*$. 
If $\tilde\alg_1\sim\tilde\alg_2$, we refer to the vertices $(v,\ell)$ such that 
$r(v)=\ell$ as the $\ell$-th \df{level partition}. In example~\ref{ex:pants}, the 
level partitions are the circled vertices of a particular level. 

\begin{ex}\label{ex:pants}Let $(S,M,T)$ be the surface given in Figure~\ref{fig:SurfaceEx1}.
If we consider the cuts
\begin{align*}
	\chi_1 &= \chi_{9,8}\chi_{4,2 }\chi_{11,2 }\chi_{12,1 }
 					 \chi_{5,7}\chi_{13,3}\chi_{12,14}\chi_{16,17},\\
	\chi_2 &= \chi_{9,8}\chi_{4,2}\chi_{ 2, 1}\chi_{3,12 }
 					 \chi_{5,7}\chi_{ 6,13}\chi_{12,14}\chi_{17,18},
\end{align*}

and $\alg_i$ given by $\chi_i$. The quiver of $\Cov(\tilde \alg_1)$ is 
given in Figure~\ref{fig:cov1}. Letting $P_i\langle r_i\rangle$ be given by 
the circled vertices. Then 
\[\tilde \alg_2 \cong 
 \bigoplus_{p\in \ZZ}\Hom_{\Cov(\alg_1)}\left(\bigoplus_{i=1}^n P_i\langle r_i\rangle,
                             \bigoplus_{i=1}^n P_i\langle r_i+p\rangle\right)
\]
is graded equivalent via the identity. There are three level partitions; the component 
of level $-1$ consists of the vertices $12,13,14,$ and $15$ along with the arrow 
$13\to 14$, $14\to 15$ and $15\to 12$. 

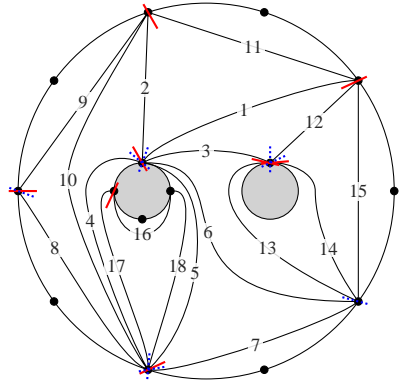
\begin{figure}
\centering
\begin{tikzpicture}[scale=.5]
\draw (0,0) circle[radius=5cm];
\filldraw[fill=gray!35]
      (-1.7,0) circle[radius=.75cm]
      (1.7,0) circle[radius=.75cm] ;
      
\foreach \a [count=\ai] in {0,36,...,324}
    \node[solid,name=o\ai] at (\a:5cm) {};

\node[solid,name=il] at ($(90:.75cm)+(-1.7,0)$) {};
\node[solid,name=il3] at ($(0:.75cm)+(-1.7,0)$) {};
\node[solid,name=il1] at ($(180:.75cm)+(-1.7,0)$) {};
\node[solid,name=il2] at ($(270:.75cm)+(-1.7,0)$) {};

\node[solid,name=ir] at ($(90:.75cm)+(1.7,0)$) {};

\draw 
  (il) .. controls +(55:1cm) and +(180:1cm) .. (o2) node[int] {1}
  (il) -- (o4) node[int] {2}
  (il) .. controls +(25:1cm) and +(155:1cm) .. (ir) node[int] {3}
  (il) .. controls +(160:1cm) and +(90:.5cm) .. ($(-1.7,0)+(180:1.5cm)$)
  ($(-1.7,0)+(180:1.5cm)$) .. controls +(270:1cm) and +(120:1cm) .. (o8) 
      node[int,pos=.2] {4}
  (il) .. controls +(350:.5cm) and +(120:.5cm) .. ($(0:1.2cm)+(-1.7,0)$)
  ($(0:1.2cm)+(-1.7,0)$) .. controls +(300:1cm) and +(45:2cm) .. (o8) 
      node[int] {5}
  (il) .. controls +(0:3cm) and +(180:6cm) .. (o10) node[int] {6}
  (o8) .. controls +(355:1cm) and +(215:1cm) .. (o10) node[int] {7}
  (o8) .. controls +(155:1cm) and ($(o6)!.25!(o8)$) .. (o6) node[int,pos=.7] {8}
  (o6) -- (o4) node[int] {9}
  (o8) .. controls +(127:1cm) and +(270:1cm).. ($(o6)!.4!(il)$)
      node[int,pos=1] {10}
  ($(o6)!.4!(il)$) .. controls +(90:1cm) and +(240:2cm) .. (o4)
  (o4) -- (o2) node[int] {11} 
  (ir) -- (o2) node[int] {12}
  (ir) .. controls +(180:1cm) and +(90:.25cm) .. ($(180:1.1cm)+(1.7,0)$)
  ($(180:1.1cm)+(1.7,0)$) .. controls +(270:1cm) and +(155:2cm) .. (o10)
      node[int] {13}
  (ir) .. controls +(0:.5cm) and +(90:.5cm) .. ($(0:1.2cm)+(1.7,0)$)
  ($(0:1.2cm)+(1.7,0)$) .. controls +(270:1cm) and +(125:1cm) .. (o10)
      node[int] {14}
  (o2) -- (o10) node[int] {15}
  (il1) .. controls +(270:1.5cm) and +(270:1.5cm) .. (il3) node[int] {16}
  (il1) .. controls +(180:1cm) and +(90:1cm) .. (o8) node[int] {17}
  (il3) .. controls +(0:1cm) and +(75:1cm) .. (o8) node[int] {18};
\draw[rcut] 
	($(o6)+(180:.25)$) -- ($(o6)+(0:.5)$)     
	($(il)+(300:.25)$) -- ($(il)+(120:.5)$)   
	($(o4)+(120:.25)$) -- ($(o4)+(300:.5)$)   
	($(o2)+(25:.25)$) -- ($(o2)+(205:.5)$)    
	($(o8)+(205:.25)$) -- ($(o8)+(25:.5)$)    
	($(ir)+(-14:.25)$) -- ($(ir)+(168:.5)$)   
	($(ir)+(185:.25)$) -- ($(ir)+(5:.5)$)    
	($(il1)+(65:.25)$) -- ($(il1)+(245:.5)$)  
	;
\draw[bcut,densely dotted] 
	($(il)-(140:.25)$) -- ($(il)+(140:.5)$)   
	($(o6)-(340:.25)$) -- ($(o6)+(340:.5)$)   
	($(o8)-(85:.25)$) -- ($(o8)+(85:.5)$)     
	($(il)+(250:.25)$) -- ($(il)+(70:.5)$)    
	($(ir)+(270:.25)$) -- ($(ir)+(90:.5)$)    
	($(o8)+(190:.25)$) -- ($(o8)+(10:.5)$)    
	($(o10)-(165:.25)$) -- ($(o10)+(165:.5)$)    
	($(ir)+(205:.25)$) -- ($(ir)+(25:.5)$)    
	;

\end{tikzpicture}
\caption{The solid red lines indicate the cut $\chi_1$, 
         the dashed blue lines $\chi_2$ from Example~\ref{ex:pants}.}
\label{fig:SurfaceEx1}
\end{figure}

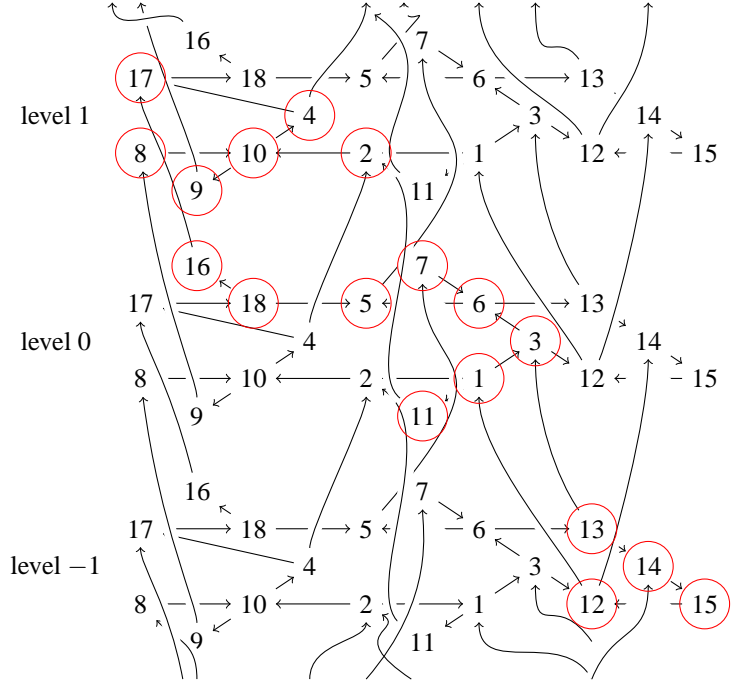
\begin{figure}
\centering

\begin{tikzpicture}[xscale=.75 ,yscale=.5]
	\node at (-2.5,6) {level 1};
	\node at (-2.5,0) {level 0};
	\node at (-2.5,-6) {level $-1$};

	\foreach \x/\y/\n in {4/2/7,0/2/16,-1/1/17,1/1/18,3/1/5,5/1/6,7/1/13,
	2/0/4,6/0/3,8/0/14,-1/-1/8,1/-1/10,3/-1/2,5/-1/1,7/-1/12,9/-1/15,0/-2/9,
	4/-2/11}
	{\node[name=p\n] at (\x,\y+6) {\n};
	 \node[name=0\n] at (\x,\y) {\n};
	 \node[name=n\n] at (\x,\y-6) {\n};
	}

	\foreach \b/\e in {
	8/10,10/9,
	2/10,10/4,
	2/1,1/11,
	1/3,3/12,
	4/17,18/5,
	6/5,7/6,
	3/6,6/13,
	13/14,
	14/15,15/12,
	17/18,18/16}
	{\path[->] (p\b) edge (p\e);
	 \path[->] (0\b) edge (0\e);
	 \path[->] (n\b) edge (n\e);
	}

	\foreach \b/\e in {n/0,0/p}{
		\draw[preaction={draw,white,line width=5}][->] 
			(\b9) .. controls +(90:1cm) and ($(\b17)+(.35,0)$) ..(\e8);
		\draw[preaction={draw,white,line width=5}][->] 
			(\b16) .. controls ($(\e8)+(.5,0)$) and +(270:1cm) ..(\e17);
		\draw[preaction={draw,white,line width=5}][->] 
			(\b4) .. controls +(90:1cm) and +(270:1cm) ..(\e2);
		\draw[preaction={draw,white,line width=5}][->] 
			(\b5) .. controls +(60:5.5cm) and +(270:3cm) ..(\e7);
		\draw[preaction={draw,white,line width=5}][->] 
			(\b11) .. controls +(130:1.75cm) and +(315:2cm) ..(\e2);
		\draw[preaction={draw,white,line width=5}][->] 
			(\b12) .. controls +(110:1.5cm) and +(270:2cm) .. (\e1);
		\draw[preaction={draw,white,line width=5}][->] 
			(\b12) .. controls +(75:1.5cm) and +(270:2cm) .. (\e14);
		\draw[preaction={draw,white,line width=5}][->] 
			(\b13) .. controls +(115:1.5cm) and +(270:2cm) .. (\e3);
	}
\draw[preaction={draw,white,line width=5}][->] 
	(p9) .. controls +(90:1cm) and ($(p17)+(.35,0)$) ..($(p8)+(0,4)$);
\draw[preaction={draw,white,line width=5}][->] 
	($(n9)-(0,1)$) .. controls +(90:1cm) and ($(n9)-(.35,0)$) ..(n8);
\draw[preaction={draw,white,line width=5}][->] 
	(p16) .. controls ($(p8)+(.5,4)$) and +(270:1cm) ..($(p17)+(-.5,2)$);
\draw[preaction={draw,white,line width=5}][->] 
	($(n16)-(.25,5)$) .. controls ($(n8)+(.5,0)$) and +(270:1cm) ..(n17);
\draw[preaction={draw,white,line width=5}][->] 
	(p4) .. controls +(90:1cm) and +(270:1cm) ..($(p2)+(0,4)$);
\draw[preaction={draw,white,line width=5}][->] 
	($(n4)-(0,3)$) .. controls +(90:1cm) and +(270:1cm) ..(n2);
\draw[preaction={draw,white,line width=5}][->] 
	(p5) .. controls +(60:3cm) and +(270:1cm) .. ($(p7)+(-.33,1)$);
\draw[preaction={draw,white,line width=5}][->] 
	($(n5)-(0,4)$) .. controls +(60:2.5cm) and +(270:1cm) ..(n7);
\draw[preaction={draw,white,line width=5}][->] 
	(p12) .. controls +(110:1.5cm) and +(270:2cm) .. ($(p1)+(0,4)$);
\draw[preaction={draw,white,line width=5}][->] 
	($(n12)-(0,2)$) .. controls +(110:1.5cm) and +(270:2cm) .. (n1);
\draw[preaction={draw,white,line width=5}][->] 
	(p12) .. controls +(75:1.5cm) and +(270:2cm) .. ($(p14)+(0,3)$);
\draw[preaction={draw,white,line width=5}][->] 
	($(n12)-(0,2)$) .. controls +(75:1.5cm) and +(270:2cm) .. (n14);
\draw[preaction={draw,white,line width=5}][->] 
	(p13) .. controls +(115:1.5cm) and +(270:2cm) .. ($(p3)+(0,3)$);
\draw[preaction={draw,white,line width=5}][->] 
	($(n13)-(0,3)$) .. controls +(115:1.5cm) and +(270:2cm) .. (n3);
\draw[preaction={draw,white,line width=5}][->] 
	(p11) .. controls +(130:1.75cm) and +(315:2cm) ..($(p2)+(.15,3.68)$);
\draw[->] 
	($(n11)-(.2,1)$) .. controls +(135:1.75cm) and +(315:1cm) ..(n2);

\foreach \n in {16,18,5,7,6,3,1,11}
	\node[sel] at (0\n) {};
\foreach \n in {17,4,8,9,10,2}
	\node[sel] at (p\n) {};
\foreach \n in {13,12,14,15}
	\node[sel] at (n\n) {};
\end{tikzpicture}
\caption{The quiver of $\Cov(\tilde \alg_1)$, the circled vertices are 
those such that $r(v)=i$ and determine a graded equivalence via the identity 
between $\tilde\alg_1$ and $\tilde\alg_2$ from Example~\ref{ex:pants}.}
\label{fig:cov1}
\end{figure}
\end{ex}

\begin{prop}\label{prop:boundedregions}
Let $\chi_1$ and $\chi_2$ be two admissible cuts of a surface $(S,M,T)$
such that $\tilde \alg_1$ and $\tilde \alg_2$ are graded equivalent via the
identity. The connected components of each level partition of
$\Cov(\tilde \alg_1)$ determine a connected region in $S$ bounded by the
sliding edges of $(\chi_1,\chi_2)$ and $\partial S$.
\end{prop}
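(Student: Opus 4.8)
The plan is to read off the integers $r_i$ guaranteed by the hypothesis as a height function on the arcs of $T$ and to show that its level sets are exactly the regions cut out by the sliding edges. By Remark~\ref{remark:equivviaid}, a graded equivalence via the identity is precisely the data of a function $r\colon Q_0\to\ZZ$ satisfying $r(t(\alpha))-r(s(\alpha))=w_1(\alpha)-w_2(\alpha)$ for every arrow $\alpha$, where $w_i$ is the weight induced by $\chi_i$ as in Definition~\ref{dfn:weight}. Since each $w_i$ takes values in $\{0,1\}$, the difference $w_1-w_2$ is supported on the arrows cut by exactly one of the two cuts, so $r$ is forced to be constant across every arrow cut by neither cut and to jump by $\pm1$ across an arrow cut by exactly one cut. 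I first record that the level partition at level $\ell$ is the set $\{(v,\ell):r(v)=\ell\}$, and that two of its vertices are joined in $Q^*$ by an arrow lying within level $\ell$ precisely when the corresponding arcs meet at a corner of a triangle cut by neither $\chi_1$ nor $\chi_2$; indeed, staying in level $\ell$ forces $w_1(\alpha)=0$ and having both endpoints circled forces $r$ to agree, hence $w_2(\alpha)=0$ as well. This is the only adjacency relevant to connectivity inside a fixed level.

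Second, I would run the local analysis of Remark~\ref{remark:configurations} triangle by triangle. In a non-internal triangle, or an internal triangle on which $\chi_1$ and $\chi_2$ agree, no arrow has nonzero weight difference, so all three arcs sit at a common level; in an internal triangle on which the cuts differ, exactly one side---the sliding edge---sits one level above or below the other two, which share a common level (this is the content of Figures~\ref{fig:proof2sliding} and~\ref{fig:proof1sliding}). Consequently every triangle has a well-defined \emph{majority level}, namely the common value of $r$ on its non-sliding sides, and a side of a triangle fails to be at its majority level if and only if it is a sliding edge of that triangle.

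Third, I define a region to be a connected component of the surface obtained by cutting $S$ along the sliding edges, equivalently a connected component of the dual graph whose nodes are the triangles of $T$ and whose edges are the shared non-sliding arcs. Because two triangles glued along a non-sliding arc see that arc at their common majority level, the majority level is constant on each region; call it $\ell_R$. Since an arc is interior to a region exactly when it is a non-sliding shared arc, the boundary of a region consists only of sliding edges and boundary segments, that is, of sliding edges and pieces of $\partial S$, which is the asserted description of the boundary. It then remains to match the two partitions: by the local picture the arcs at level $\ell$ incident to a region $R$ with $\ell_R=\ell$ are pairwise joined within level $\ell$ through the neither-cut corners of the triangles of $R$, while conversely a neither-cut corner joining two level-$\ell$ arcs forces the ambient triangle to have majority level $\ell$, hence to lie in a single region. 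Thus each connected component of the level-$\ell$ partition is exactly the set of level-$\ell$ arcs incident to one region $R$ with $\ell_R=\ell$, and that region is connected and bounded by sliding edges and $\partial S$.

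The step I expect to be most delicate is the bookkeeping of which region a sliding edge is assigned to, since a sliding edge sits on the boundary between two regions and lies at the majority level of at most one of them. For a one-sliding edge this is unambiguous: it belongs to the component of the region on the side where it is a non-sliding, majority side. A two-sliding edge, however, is minority in both of its triangles, has all of its corners cut, and therefore forms a singleton component determining only a thin region around the arc; I will need to treat this degenerate case separately and verify that ``connected region bounded by sliding edges and $\partial S$'' still holds there. The remaining care is simply to confirm that graph-theoretic connectivity inside a fixed level of $Q^*$ coincides with geometric adjacency across neither-cut corners, which the local analysis above already supplies.
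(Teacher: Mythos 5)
Your proof is correct and takes essentially the same approach as the paper's: both rest on the observation that adjacency inside a level partition occurs exactly at triangle corners cut by neither $\chi_1$ nor $\chi_2$, so that a level component corresponds to a contiguous collection of triangles which can only be exited across sliding edges or $\partial S$. The paper's own proof is a compressed version of your argument---it likewise treats the two-sliding singleton case separately, in the remark immediately following the proof---so your height-function identity $r(t(\alpha))-r(s(\alpha))=w_1(\alpha)-w_2(\alpha)$ and the explicit matching of level components with regions simply make rigorous what the paper leaves implicit.
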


\begin{proof} Let $Q^*$ be the quiver of $\Cov(\tilde \alg_1)$ and $\calc$ be a 
level connected component of $Q^*$. By definition, $\calc$ can touch other 
connected components only by bridge arrows, which are associated to sliding 
edges of $(\chi_1,\chi_2)$. Recall that arrows correspond to triangles in 
$(S,M,T)$. Because $\calc$ is a connected subgraph of $Q^*,$ $\calc$ must 
correspond to some contiguous collection of triangles in $(S,M,T)$, denote 
this collect by $\frakC$. Further, because we can only leave $\calc$ via 
bridge arrows, we must also have that $\frakC$ is bounded by $\partial S$ and 
sliding edges.
\end{proof}

Note that if $\calc$ consists of a single vertex, then $\frakC$ will consist of 
a single sliding edge, this edge must be two-sliding.  In all other cases 
$\frakC$ will have positive area. 

\begin{remark}\label{remark:boundedregions}
Proposition~\ref{prop:boundedregions} implies that any two edges $\te j$ and 
$\te k$ contained in the interior of the same bounded region must have the 
same value $r_j=r_k$, because these regions are determine by the connected 
components of that level.
\end{remark}

\begin{dfn}
The pair $(\chi_1,\chi_2)$ is called \df{equi-distributed} if for each boundary
component $\bcpt$, we have $|\im\cutt_1 \cap B|  = |\im\cutt_2\cap B|$, meaning the
the number of cuts in $\chi_1$ on $\bcpt$ is equal to the number of cuts in 
$\chi_2$ on $\bcpt$.
\end{dfn}

\begin{thm}\label{thm1}
Let $(S,M,T)$ be a triangulated bordered surface of genus 0 and $\alg_1$ and
$\alg_2$ surface algebras of type $(S,M,T)$.  Then $\tilde\alg_1$ and
$\tilde\alg_2$ are graded equivalent via the identity if and only if 
$(\chi_1,\chi_2)$ is equi-distributed.
\end{thm}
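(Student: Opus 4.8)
The plan is to translate ``graded equivalent via the identity'' into a single cohomological condition on the quiver $Q_T$ and then read that condition off topologically using the genus $0$ hypothesis. By Remark~\ref{remark:equivviaid}, such a graded equivalence is exactly a choice of levels $r\colon (Q_T)_0\to\ZZ$ so that for every arrow $\alpha\colon v\to v'$ of $Q_T$ the quiver $Q^*$ of $\Cov(\tilde\alg_1)$ contains an arrow $(v,r(v))\to(v',r(v')+w_2(\alpha))$. Since such an arrow exists precisely when $w_1(\alpha)=r(v')+w_2(\alpha)-r(v)$, the whole problem collapses to the discrete integrability condition
\[ r(t(\alpha))-r(s(\alpha)) = w_1(\alpha)-w_2(\alpha) \qquad\text{for every }\alpha\in (Q_T)_1. \]
So the first step is to record that $r$ exists if and only if $w_1-w_2$ is a coboundary on the connected graph $Q_T$, equivalently, the signed sum of $w_1-w_2$ vanishes on every cycle of $Q_T$ (and then $r$ is unique up to the global constant already allowed in Remark~\ref{remark:configurations}). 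As each $w_i$ is the indicator of the arrows cut by $\chi_i$, this says $w_1$ and $w_2$ are cohomologous in $H^1(Q_T;\ZZ)$.

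Next I would reduce the cycle condition to a geometrically meaningful generating set by embedding $Q_T$ in $S$ (a vertex at the midpoint of each arc, each arrow drawn inside its triangle). The complementary regions are of two kinds: contractible faces, and one annular region hugging each boundary component $\bcpt$, whose inner boundary is a cycle $\gamma_{\bcpt}$. The key local observation is that every arrow cut by an admissible cut is a corner arrow of an internal triangle, hence lies on the $3$-cycle face of that triangle and, on its other side, on exactly one boundary annulus, namely the one containing the marked point $\chi(\triangle)$; in particular a cut arrow never bounds a contractible face other than a $3$-cycle. This gives the two computations that drive the proof: on any contractible face $F$ the value of $w_i$ on $\partial F$ is cut-\emph{independent} (it equals $1$ on a $3$-cycle, since each internal triangle contains exactly one cut arrow of each $\chi_i$, and $0$ otherwise), while on the boundary cycle one has $w_i(\gamma_{\bcpt})=|\im\chi_i\cap\bcpt|$, because the cut arrows lying on $\gamma_{\bcpt}$ are exactly those coming from internal triangles whose cut marked point lies on $\bcpt$.

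Finally I would invoke the genus $0$ hypothesis. In genus $0$, $H_1(S)=\ZZ^{\,b-1}$ is carried by the boundary loops $[\gamma_{\bcpt}]$ subject to the single relation $\sum_{\bcpt}[\gamma_{\bcpt}]=0$, so $H_1(Q_T)$ is generated by the contractible face boundaries together with the cycles $\gamma_{\bcpt}$. Since $w_1-w_2$ automatically vanishes on every contractible face, it is a coboundary if and only if it vanishes on each $\gamma_{\bcpt}$, i.e. if and only if $|\im\chi_1\cap\bcpt|=|\im\chi_2\cap\bcpt|$ for every $\bcpt$, which is exactly equi-distribution. The homology relation $\sum_{\bcpt}\gamma_{\bcpt}=0$ corresponds to $\sum_{\bcpt}|\im\chi_i\cap\bcpt|=|\intri|$, so one of the $b$ equalities is automatic, consistent with there being only $b-1$ independent conditions. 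This chain of equivalences proves both directions at once.

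The step I expect to be the main obstacle is the geometric dictionary of the middle paragraph: identifying the complementary regions of $Q_T\hookrightarrow S$ precisely, describing $\gamma_{\bcpt}$ as an honest oriented cycle in the quiver (it must be threaded through the triangle fan at each marked point of $\bcpt$, not merely built from corner arrows), and checking that the cut arrows on it are coherently oriented so as to be counted with the same sign. Proving $w_i(\gamma_{\bcpt})=|\im\chi_i\cap\bcpt|$ with the correct orientation conventions, and verifying that in genus $0$ no further handle cycles intervene, is where the real work lies; the local configurations already tabulated in Remark~\ref{remark:configurations} and the region description in Proposition~\ref{prop:boundedregions} should supply exactly the jump data needed to make this rigorous.
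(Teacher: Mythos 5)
Your proposal is correct, and it takes a genuinely different route from the paper's proof. The paper never phrases the problem cohomologically: it integrates $r$ directly, region by region, using Remark~\ref{remark:configurations} and Proposition~\ref{prop:boundedregions}, and then checks consistency of this propagation along an arbitrary loop $\rho$ in $S$ by showing that the total jump $\Delta_r\rho$ equals $-\Delta_\chi\rho$, where $\Delta_\chi\rho$ is the net difference in the number of local cuts of $\chi_1$ and $\chi_2$ on the boundary components \emph{enclosed} by $\rho$; genus $0$ enters there through separation (every loop in $S$ bounds), so equi-distribution forces $\Delta_\chi\rho=0$, and the converse is read off from the same identity. You work on the dual side: existence of $r$ is exactness of the $1$-cochain $w_1-w_2$ on the graph $Q_T$, and genus $0$ enters by capping the boundary circles so that the cycle space of the (connected) embedded quiver is generated by its face boundaries --- the $3$-cycles of internal triangles, on which $w_1-w_2$ vanishes because each internal triangle carries exactly one cut arrow of each $\chi_i$, and the boundary walks $\gamma_\bcpt$, on which it evaluates to $\pm\bigl(|\im\cutt_1\cap\bcpt|-|\im\cutt_2\cap\bcpt|\bigr)$. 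In short, the paper counts cuts enclosed by a loop, while you count cut arrows lying on a cycle; the two bookkeepings are dual to each other. Your route buys a cleaner logical structure (both implications fall out of one chain of equivalences, the disc is not a special case, and the torus counterexample in the remark following the theorem is explained transparently as a handle class not generated by faces), while the paper's route stays in the sliding-edge and region language that it reuses for Theorem~\ref{thm3}.

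The steps you flag as the main obstacle do go through, so there is no gap. Each cut arrow sits at a corner of an internal triangle, with the central $3$-cycle disc on one side and the corner region at the cut marked point $v$ on the other; that corner region belongs to the complementary region of the boundary component containing $v$, so each cut arrow of $\chi_i$ counted in $|\im\cutt_i\cap\bcpt|$ is traversed exactly once by $\gamma_\bcpt$ and by no other boundary walk. Sign coherence follows from the uniform counter-clockwise convention defining $Q_T$: a single local computation shows the boundary walk of a corner region always runs the same way relative to the orientation of the corner arrow, and arrows of non-internal triangles traversed twice by one walk occur with opposite signs and carry weight $0$ anyway. The only hypothesis you use tacitly is connectivity of $Q_T$ (needed so that all complementary regions in the capped sphere are discs); this holds for every triangulation considered here, since the disc with four marked points is excluded, so every arc is a side of some triangle having at least two arc sides.
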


\begin{proof}
First we assume that $(\chi_1,\chi_2)$ is equi-distributed.
Set $Q_i$ to be the quiver of $\alg_i$, $Q$ the quiver of $(S,M,T)$, and
$Q^*$ the quiver associated to $\Cov(\tilde \alg_1)$. By determining the 
associated level partitions in $Q^*$ we will explicitly describe the function 
$r\colon Q_0 \to \ZZ$ so that we have
\[\alg_2 \xrightarrow{\ \cong \ }\bigoplus_{p\in \ZZ}
    \Hom_{\Cov(\alg_1)}\left(\bigoplus_{i=1}^n P_i\langle r_i\rangle,
                          \bigoplus_{i=1}^n P_i\langle r_i+p\rangle\right)
\]

Because of Proposition~\ref{prop:boundedregions} and
Remark~\ref{remark:boundedregions} it is sufficient to only determine the values
for $r$ near sliding edges. The value for other edges will be induced by the
choices at the sliding edges.

The process is to choose (at random) a bounded region and assign $r=0$ to each
internal arc of that region. Applying Remark~\ref{remark:configurations}, we then
proceed to assign values of $r$ to each sliding edge bounding the chosen region
as well as the neighboring regions. We then reiterate this process with each
neighboring region and so on. The primary work of the proof is to show that such
a choice is well defined for all of $S$.
Assume first that $S$ has at least two boundary components.

Let $\{\frakC_i\}$ $i=0,\dots,r$ be the bounded regions given by $(\chi_1,\chi_2)$
and let $r_i$ be the corresponding value of $r$ for $\frakC_i$. We now consider
$r$ as the function $r\colon S\to \ZZ$ by setting $r(x) = r_{i}$ for
$x\in\frakC_i$. Fix $i$ and a point $x_0\in \frakC_i$ let $\rho$ be a
non-contractible loop based at $x_0$, without loss of generality we may let $i=0$.
We may assume that $r(x_0)=0$. Because $S$ is genus zero, the loop divides $S$
into two parts, the inside (to the right) of $\rho$ and the outside (to the left)
of $\rho$. We want to show that as we travel along $\rho$, in either direction,
and apply Remark~\ref{remark:configurations} to determine the value of $r$ as we
change bounded regions, we recover that $r(x_0)=0$ as we cross back into
$\frakC_0$. Let $r_0'$ be value of $r$ as we cross back into $\frakC_0$.

For each sliding edge $\te{}$ intersecting $\rho$ we associate two integers
$\Delta_r \te{}$ and $\Delta_\chi \te{}$. Let $\frakC$ and $\frakD$ be the
components that are bound by $\te{}$ and $\frakD$ follows $\frakC$ with respect to
$\rho$, then $\Delta_r \te{}:= r(\frakD) - r(\frakC)$. Further, let $a_{\te{}}$ be
the number of local cuts from $\chi_1$ incident to $\te{}$ on boundary components
inside of $\rho$ and $b_{\te{}}$ the number of local cuts from $\chi_2$ on boundary
components inside of $\rho$ and incident to $\te{}$, we define 
$\Delta_\chi \te{} = b_{\te{}}-a_{\te{}}$. For each sliding
edge and choice of $\rho$, since $r$ is chosen as in
Remark~\ref{remark:configurations}, then $\Delta_\chi \te{} = -\Delta_r \te{}$.
This can be shown by considering cases. See Figures~\ref{fig:proof2sliding}
and~\ref{fig:proof1sliding}.

The number $\Delta_\chi\rho=\sum_i \Delta_\chi\te i$ measures the total change in
the number of cuts on the boundary components inside of $\rho$. Similarly,
$\Delta_r\rho = \sum_i \Delta_r \te i$ measures the total change in $r$ after one
iteration of $\rho$. Hence, if $(\chi_1,\chi_2)$ is equi-distributed, then
$\Delta_\chi\rho = 0$. Therefore, $\Delta_r\rho=-\Delta_\chi\rho=0$.  It follows that 
$r_0=0$, as desired. Because $\rho$ is arbitrary, we see that the choice of 
$r$ given by Remark~\ref{remark:configurations} is well-defined.

Conversely, assume that $(\chi_1,\chi_2)$ is not equi-distributed.  Then
in the above analysis we must have $\Delta_r\rho=-\Delta_\chi\rho\neq 0$ for some 
loop $\rho$.  It follows that there is no consistent way to define the function 
$r\colon Q_0\to \ZZ$. It follows that $\tilde\alg_1$ and $\tilde\alg_2$ are not
graded equivalent.
\end{proof}

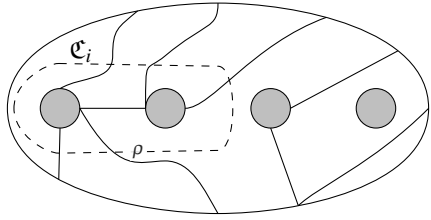
\begin{figure}
\centering
\begin{tikzpicture}[scale=.35]
\draw (8,0)  .. controls +(90:3) and +(0:3) .. (0,4)
             node[name=a,pos=0,coordinate] {a} 
             node[name=b,pos=.3,coordinate] {b}
             node[name=c,pos=.6,coordinate] {c} 
             node[name=d,pos=1,coordinate] {d}
      (0,4)  .. controls +(180:3) and +(90:3) .. (-8,0)
             node[name=e,pos=.3,coordinate] {e}
             node[name=f,pos=.6,coordinate] {f} 
             node[name=g,pos=1,coordinate] {g}
      (-8,0) .. controls +(270:3) and +(180:3) .. (0,-4)
             node[name=h,pos=.4,coordinate] {h}
             node[name=i,pos=.6,coordinate] {i}
             node[name=j,pos=1,coordinate] {j}
      (0,-4) .. controls +(0:3) and +(270:3) .. (8,0)
             node[name=k,pos=.3,coordinate] {k}
             node[name=l,pos=.6,coordinate] {l};
\filldraw[fill=gray!50] (-6,0) circle[radius=.75] ;
\filldraw[fill=gray!50] (-2,0) circle[radius=.75] ;
\filldraw[fill=gray!50] (2,0) circle[radius=.75] ;
\filldraw[fill=gray!50] (6,0) circle[radius=.75] ;
\draw (-6,.75) .. controls +(30:1) and +(270:1) .. (-4,2)
	    (-4,2) .. controls +(90:1) and +(220:1) .. (e)
	    (-6,-.75)  -- (h)
	    (-6+.75,0) .. controls +(300:3) and +(180:1) .. (-2,-2)
	    (-2,-2) .. controls +(0:1) and +(120:1) .. (j)
	    (-6+.75,0) -- (-2-.75,0)
	    (-2-.75,0) .. controls +(90:2) and +(220:1) .. (-2,2)
	    (-2,2) .. controls +(40:2) and +(270:1) .. (d)
	    (-2+.75,0) .. controls +(0:1) and +(200:3) .. (c)
	    (k)  -- (2,-.75)
	    (b) -- (2+.75,0)
	    (a) .. controls +(225:3) and +(45:1) .. (k);
\node[below] at ($(f)+(.85,.25)$) {$\frakC_i$};
\draw[dashed,xscale=.5] 
     ($(g)+(.5,0)$) .. controls +(90:1) and +(180:1) .. ($(f)-(0,1)$)
     ($(f)-(0,1)$) .. controls +(0:1) and +(180:1) .. ($(j)!.7!(d)$)
     ($(j)!.7!(d)$) .. controls +(0:1.5) and +(0:1.5) .. ($(j)!.3!(d)$)
     ($(j)!.3!(d)$) .. controls +(180:1) and +(0:1) .. ($(h)+(0,1)$) 
          node[int] {$\rho$}
     ($(h)+(0,1)$) .. controls +(180:1) and +(270:1) .. ($(g)+(.5,0)$);
\end{tikzpicture}
\caption{The surface $S$ partitioned by $\{\frakC_i\}$ and the loop $\rho$}
\label{fig:proofLoop}
\end{figure}

\begin{remark}
We remark that the above theorem does not hold for higher genus. Let $S$ be the
torus with one boundary component. Let $M$ be a single point on the boundary and
consider the triangulation $T$ in Figure~\ref{torusTriangulation}. Because there
is only one boundary component, Propotion~\ref{prop:graded equiv} would imply that
any two admissible cuts should be graded equivalent via the identity. However, the
cuts $\chi_{1,2}\chi_{3,4}$ and $\chi_{1,2}\chi_{4,1}$ are easily shown to not
be graded equivalent via the identity. Let $\alg_1$ be given by
$\chi_{1,2}\chi_{3,4}$ and $\alg_2$ be given by $\chi_{1,2}\chi_{4,1}$. Because
the induced weight on the arrows $1\to2$, $2\to3$ and $1\to3$ does not change
between the two cuts, we must have $r_1=r_2=r_3$, where $r_i$ is as in
Propostion~\ref{prop:graded equiv}. Additionally, because the weight on the arrow
$4\to1$ changes we must have $r_1\neq r_4$, but the weight on $2\to4$ does not
change so $r_2=r_4$, hence we must also have $r_1=r_4$, a contradiction.

\begin{figure}[h]
\centering
\begin{tikzpicture}
\node[coordinate,name=t1] at (1,1) {};
\node[coordinate,name=t2] at (-1,1) {};
\node[solid,name=b1] at (1,-1) {};
\node[coordinate,name=b2] at (-1,-1) {};
\draw (t1) -- node[int] {1} 
      (t2) -- node[int] {2} 
      (b2) -- node[int] {1} 
      (b1.center) -- node[int] {2} (t1)
      (b1) .. controls +(170:2) and +(225:1) ..(t1) node[int] {3}
      (t1) .. controls +(200:2) and +(45:1) .. (b2) node[int] {4};
\filldraw[fill=gray!40](b1) .. controls +(150:1.5) and +(120:1.5) .. (b1);
\end{tikzpicture}
\caption{A triangulation of the torus with one boundary component.}
\label{torusTriangulation}
\end{figure}
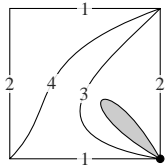
\end{remark}

\section{Boundary Permutations}
At this point we have determined that we get graded equivalent algebras when we
permute local cuts along a fixed boundary component. In this section we will
show that we can also permute cuts among different boundary components. 

We define the mapping class group of $(S,M)$ as in \cite{ASS}. Set
$\Homeo^+(S,M)$ to be the set of orientation preserving homeomorphism from $S$
to $S$ that send $M$ to $M$. Note that if a boundary component $C_1$ is mapped
to a component $C_2$, then the two components must have the same number of
marked points. We say that a homeomorphism $f$ is isotopic to the identity
relative to $M$, if $f$ is isotopic to the identity via a homotopy that fixes
$M$ point-wise. Then we set $\Homeo_0(S,M)$ to be the homeomorphisms isotopic to
the identity relative to $M$. The \df{mapping class group} of $(S,M)$ is
\[ \MG(S,M) = \Homeo^+(S,M)/\Homeo_0(S,M) \]

For $f\in \MG(S,M)$ we define $f$ at an admissible cut
$(S,M\dag,T\dag)$ by setting $f(\chi_{i,j}) = \chi_{f(i),f(j)}$ for each local
cut. By construction this induces a graded isomorphism of $\tilde \alg_{T\dag}$
and $\tilde \alg_{f(T\dag)}$ because it explicitly sends arrows of weight one to
arrows of weight one.

\begin{thm}\label{thm2} Let $(S,M,T)$ be a triangulated bordered surface of genus
0 and $\alg_1$ and $\alg_2$ be surface algebras of type $(S,M,T)$
coming from admissible cuts $\cutt_1$ and $\cutt_2$. Then $\tilde\alg_1$ and $\tilde\alg_2$
are graded equivalent if there is an element $f\in \MG(S,M)$ such that $f$ induces
a quiver automorphism on $Q_T$ and $(\cutt_1,f(\cutt_2))$ or $(f(\cutt_1),\cutt_2)$ 
are equi-distributed.
\end{thm}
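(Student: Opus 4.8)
The plan is to reduce Theorem~\ref{thm2} to Theorem~\ref{thm1} by using $f$ to move one of the two cuts into a position equi-distributed with the other, and then to compose the resulting graded isomorphism with the graded equivalence via the identity supplied by Theorem~\ref{thm1}. I will carry out the case in which $(\cutt_1,f(\cutt_2))$ is equi-distributed; the case $(f(\cutt_1),\cutt_2)$ is handled identically after exchanging the roles of $\alg_1$ and $\alg_2$.

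The first step is to make sense of $f(\cutt_2)$ as an admissible cut of the \emph{same} triangulated surface $(S,M,T)$. Since $f\in\MG(S,M)$ induces a quiver automorphism of $Q_T$, it carries the arcs of $T$ to the arcs of $T$ (up to isotopy), so $f(T)=T$ and $f$ restricts to an automorphism of the Jacobian algebra $\alg_T=\textup{Jac}(Q_T,W)$: being orientation preserving it permutes the internal triangles, hence fixes the potential $W$ and preserves the ideal $I_T$. Consequently $f(\cutt_2)$ is again an admissible cut of $(S,M,T)$, and its graded Jacobian algebra $\tilde\alg_{f(\cutt_2)}$ is the algebra $\alg_T$ equipped with the grading induced by the cut $f(\cutt_2)$ as in Definition~\ref{dfn:weight}, built on the very same quiver $Q_T$ as $\tilde\alg_1$ and $\tilde\alg_2$.

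Next I invoke the observation recorded just before the theorem: because the rule $f(\chi_{i,j})=\chi_{f(i),f(j)}$ sends weight-one arrows to weight-one arrows, $f$ is an isomorphism of graded algebras $\tilde\alg_2\xrightarrow{\ \sim\ }\tilde\alg_{f(\cutt_2)}$. On the other hand, the pair $(\cutt_1,f(\cutt_2))$ is equi-distributed by hypothesis, so Theorem~\ref{thm1} applies to it and produces, via the choice of $r$ described in Remark~\ref{remark:configurations}, a graded algebra isomorphism
\[
\tilde\alg_{f(\cutt_2)}\xrightarrow{\ \sim\ }\bigoplus_{p\in\ZZ}
 \Hom_{\Cov(\tilde\alg_1)}\left(\bigoplus_{i=1}^n P_i\langle r_i\rangle,
 \bigoplus_{i=1}^n P_i\langle r_i+p\rangle\right),
\]
that is, condition (2) of Proposition~\ref{prop:graded equiv} for the pair $(\tilde\alg_1,\tilde\alg_{f(\cutt_2)})$.

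Finally I compose the two isomorphisms. The composite $\tilde\alg_2\xrightarrow{\ \sim\ }\tilde\alg_{f(\cutt_2)}\xrightarrow{\ \sim\ }\bigoplus_{p}\Hom_{\Cov(\tilde\alg_1)}(\ldots)$ is again an isomorphism of graded algebras whose target is computed in $\Cov(\tilde\alg_1)$, so it verifies condition (2) of Proposition~\ref{prop:graded equiv} for the pair $(\tilde\alg_1,\tilde\alg_2)$ with the same function $r$; by that proposition this is exactly a graded equivalence of $\tilde\alg_1$ and $\tilde\alg_2$. Note that, unlike Theorem~\ref{thm1}, this equivalence is in general no longer via the identity: the nontrivial vertex permutation coming from $f$ is precisely the automorphism of $Q^*$ anticipated in Remark~\ref{remark:equivviaid}. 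I expect the only genuinely delicate point to be the first step, confirming that the hypothesis ``$f$ induces a quiver automorphism on $Q_T$'' really forces $f(T)=T$ and thereby places $f(\cutt_2)$ and $\cutt_1$ on a common quiver so that Theorem~\ref{thm1} may be applied verbatim; once that is secured, the remainder is a formal composition of isomorphisms.
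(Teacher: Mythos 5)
Your proposal is correct and follows essentially the same route as the paper: apply Theorem~\ref{thm1} to the equi-distributed pair $(\cutt_1,f(\cutt_2))$, use the graded isomorphism $\tilde\alg_2\cong\tilde\alg_{f(\cutt_2)}$ induced by $f$ (the observation stated just before the theorem), and compose. The paper's proof is just a terser version of this; your added verifications that $f(T)=T$ places $f(\cutt_2)$ on the same quiver, and that the composition witnesses condition (2) of Proposition~\ref{prop:graded equiv}, are details the paper leaves implicit.
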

\begin{proof} 
Assume $(\chi_1,f(\chi_2))$ is equi-distributed,
hence $\tilde\alg_1$ and $f(\tilde\alg_2)$ are graded equivalent by 
Theorem~\ref{thm1}. By construction the extension of $f$ to the cut surface 
induces a graded isomorphism of $\tilde\alg_2$ and $f(\tilde \alg_2)$. 
It follows that $\tilde\alg_1$ and $\tilde \alg_2$ are graded equivalent.
\end{proof}

\begin{remark}
Note that the the theorem excludes the use of the Dehn twists in $\MG(S,M)$ In particular,
this is because the Dehn twists can never change the configuration of the cuts in $(S,M,T)$.  
In general, a mapping class $f\in \MG(S,M)$ satisfying Theorem~\ref{thm2} will have to leave
$T$ invariant,  the set of all such $f$ will be a small subset of $\MG(S,M)$.
\end{remark}

On the other hand, let $\tilde\alg_1$ and $\tilde\alg_2$ be graded equivalent algebras 
coming from admissible cuts $\chi_i$  of a surface $(S,M,T)$ such that the map 
$f\colon \tilde\alg_1\to\tilde\alg_2$ induces an isomorphism of quivers 
$f\colon \tilde Q_1\to \tilde Q_2$.
Because arrows are associated to triangles, $f$ induces a map $f_S\colon(S,M,T)\to (S,M,T)$. 
To understand what this map is, we consider $(S,M,T)$ as a CW-complex where the
0-skeleton is $M$, the 1-skeleton is given by $T$ and the boundary segments, and
the 2-skeleton is given by the ideal triangles. For convenience we use the following 
definition.
\begin{dfn}
Let $(S,M,T)$ be a triangulated surface without punctures. There are three
triangle types, we call those triangles with two edges in the boundary \df{corner
triangles}, triangles with one edge in the boundary \df{basic triangles}, and
triangles with no edge in the boundary \df{internal triangles}. Notice that there
is a unique edge in $T$ associated to each corner triangle.
\end{dfn}
Before analyzing the map $f_S$, we note the following fact. Given
a two dimensional finite CW-complex $S$ and $f_1$ a continuous self mapping on the
one-skeleton of $S$, there is a continuous map $f\colon S\to S$ that restricts to
$f_1$. This map is given by considering barycentric coordinates on the
homeomorphic image of each face into a convex open subset of $\RR^2$. Hence, it is
enough to understand $f_S$ on $T$ and boundary segments. 

The induced map on $(S,M,T)$ is given by first fixing and defining the map on a 
representative for each isotopy class in $T$.
Notice that $Q_T=\tilde Q_1=\tilde Q_2$ by assumption. Hence, we can view $f$ as an
automorphism on $Q_T= (Q_0, Q_1)$. Considering $f$ as a map on vertices
$Q_0$, we define $f_1$ on the edges of the triangulation by $f_1(\te i) = \te
j$ when $f(i) = j$ in $Q_0$. Note that $f$ preserves arrow orientation in $Q$
because it is a quiver automorphism. So, in the surface, if $\te i$ is incident to
$\te j$ with $\te j$ following $\te i$ in the \orientationofthearrow direction,
then $f_1(\te j)$ is incident to and follows $f_1(\te i)$ in the
\orientationofthearrow direction. It follows that $f_1$ preserves triangle types
and orientation, that is, the edges defining a basic, internal or corner triangle
will be sent to edges defining a basic, internal, or corner triangle respectively
and further those edges will be the same relative orientation. We can extend the
definition of $f_1$ to boundary segments, because of this preservation of triangle
type, as follows. Let $\triangle$ be a basic triangle with edges $\te i$, $\te j$ and boundary
segment $b$, we define $f_1(b)$ to be the boundary segment incident to $f_1(\te
i)$ and $f_1(\te j)$. Similarly, $f_1$ maps the corner triangle with edge $\te i$
and boundary segments $b$ and $b'$ with $b$ following $b'$ in the
\orientationofthearrow direction to the corner triangle with edge $f_1(\te i)$ and
boundary segments $f_1(b)$ and $f_1(b')$ with $f_1(b)$ following $f_1(b')$ in the
\orientationofthearrow direction. By construction this map will preserve the
on the 1-skeleton, hence the induced map $f_S$ will preserve the
orientation of $S$. As a result we have the following partial converse to Theorem~\ref{thm2}.

\begin{thm}\label{thm3}
Let $\tilde\alg_1$ and $\tilde\alg_2$ be graded equivalent algebras 
coming from admissible cuts $\chi_1$ and $\chi_2$  of a surface $(S,M,T)$ 
such that the map $f\colon \tilde\alg_1\to\tilde\alg_2$ induces an 
automorphism on $Q_T$.  Then, there is mapping class $f_S\in \MG(S,M)$ that 
induces the graded isomorphism $f$ and $(f(\chi_1),\chi_2)$ are equi-distributed.
\end{thm}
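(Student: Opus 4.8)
The plan is to assemble the geometric construction already carried out in the preceding paragraphs into a genuine element of $\MG(S,M)$ and then to extract the equi-distribution from Theorem~\ref{thm1}. The map $f_1$ on the $1$-skeleton has been built: on arcs it is $f_1(\te i)=\te{j}$ whenever $f(i)=j$, and it is extended to boundary segments using that $f_1$ preserves triangle types. First I would record that $f_1$ is a \emph{bijection}: since $f$ is a quiver automorphism it permutes $Q_0$, hence permutes the arcs, and preservation of triangle types together with incidence shows the induced assignment on the $2$-cells is a bijection as well. Feeding $f_1$ into the CW-extension recalled above (barycentric coordinates on each face) produces a continuous self-map $f_S\colon S\to S$; it is a homeomorphism because on each closed face $f_1$ is a bijection matching the bounding edges, so the barycentric extension is a homeomorphism there, and the cellular inverse assembled from $f^{-1}$ inverts it globally.

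Next I would verify the three conditions defining $\Homeo^+(S,M)$. That $f_S(M)=M$ follows because the $0$-cells are exactly $M$ and $f_1$ carries the edges incident to a marked point $v$ to edges incident to a single marked point; the essential point, and the place where orientation enters, is that the \emph{cyclic} order of the arcs and boundary segments around $v$ is preserved, so the images genuinely meet at one vertex and the assignment $v\mapsto f_S(v)$ is well defined. Orientation preservation of $f_S$ is then inherited from the fact, established above, that $f_1$ respects the counter-clockwise ordering of arcs and boundary segments, i.e.\ the relation of one edge following another around a triangle. Consequently $f_S\in\Homeo^+(S,M)$ and its class lies in $\MG(S,M)$. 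By construction $f_S$ acts on $T$ exactly as $f$ acts on $Q_0$ and on triangles exactly as $f$ acts on $Q_1$, so the quiver automorphism induced by $f_S$ is $f$, whence $f_S$ induces the graded isomorphism $f$ of the statement.

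It remains to prove that $(f(\chi_1),\chi_2)$ is equi-distributed. Here I would use that, as in Remark~\ref{remark:equivviaid}, the graded equivalence of $\tilde\alg_1$ and $\tilde\alg_2$ is realized by first applying the quiver automorphism $f$ to the covering quiver of $\tilde\alg_1$ and then matching levels by the identity. Applying $f_S$ to the cut $\chi_1$ produces the graded algebra $f(\tilde\alg_1)$, which is graded isomorphic to $\tilde\alg_1$ and whose weight-one arrows are precisely $f(\chi_1)$; peeling off this isomorphism leaves a graded equivalence \emph{via the identity} between the algebra cut by $f(\chi_1)$ and the algebra $\tilde\alg_2$ cut by $\chi_2$. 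Theorem~\ref{thm1} then applies verbatim in genus $0$ and forces $(f(\chi_1),\chi_2)$ to be equi-distributed.

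The main obstacle I expect is the well-definedness of $f_S$ as an honest element of $\MG(S,M)$ — specifically verifying that the permutation $f_1$ of arcs and boundary segments descends to a well-defined, orientation-coherent permutation of the marked points $M$ (the cyclic-order condition around each vertex), after which the CW-extension and the homeomorphism property are formal. The equi-distribution step, by contrast, is a clean reduction to the already-proven Theorem~\ref{thm1} once the quiver automorphism $f$ has been peeled off, and requires only bookkeeping of which arrows carry weight one before and after applying $f$.
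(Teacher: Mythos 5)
Your proposal is correct and takes essentially the same route as the paper's proof: the paper also builds $f_S$ from the quiver automorphism via the CW-extension described just before the theorem, forms the graded algebra $\tilde\alg'$ given by the cut $f_S(\chi_1)$, argues via a commutative diagram of quiver maps ($\phi$, $\phi$, identity) that the residual equivalence $\tilde\alg'\to\tilde\alg_2$ is a graded equivalence via the identity, and then invokes Theorem~\ref{thm1} to get equi-distribution of $(f(\chi_1),\chi_2)$. Your additional verification that $f_S$ is a genuine orientation-preserving homeomorphism sending $M$ to $M$ simply fills in details the paper delegates to the discussion preceding the theorem statement.
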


\begin{proof}
Let $f_S$ be given as in the above discussion and let $\tilde\alg'$ be the
graded algebra given by $f_S(\chi_1)$. Note that $\tilde\alg'$ need not be
$\tilde\alg_2$, but, by construction it will be graded equivalent to
$\tilde\alg_2$, call the corresponding equivalence $g$. Indeed, $\tilde\alg_2$
and $\tilde\alg'$ are graded equivalent via the identity. That is, $g$ induces
the identity map on the level of quivers. This follows immediately by carefully
unwinding the definitions. By assumption $\tilde\alg_1$, $\tilde\alg_2$, and
$\tilde\alg'$ have quiver $Q_T$ . Let $\phi\colon Q_T\to Q_T$ denote the
automorphism induced by $f$. By construction, $f_S$ induces the same a quiver
automorphism $\phi$. Hence, we have the following commutative diagrams

\[\begin{tikzpicture}
{[]
\node[name=1] at (0,0) {$\tilde\alg_1$};
\node[name=2] at (2,0) {$\tilde\alg_2$};
\node[name=p] at (1,-1) {$\tilde\alg'$};
\path[->] (1) edge node[int]{$f$} (2) edge node[int] {$f_S$} (p) (p) edge node[int] {$g$} (2);
}
{[xshift=4cm]
\node[name=1] at (0,0) {$Q_T$};
\node[name=2] at (2,0) {$Q_T$};
\node[name=p] at (1,-1) {$Q_T$};
\path[->] (1) edge node[int]{$\phi$} (2) edge node[int] {$\phi$} (p) (p) edge (2);
}
\draw[-implies,double,double equal sign distance] (2.5,-.5) -- (3.5,-.5);
\end{tikzpicture}\]
Therefore, the map induced by $g$ must be the identity map. It follows from Theorem~\ref{thm1}, that
$(f(\chi_1),\chi_2)$ are equi-distributed. 
\end{proof}

\begin{ex}\label{ex mg}
We give an example of a graded equivalence given by a non-trivial mapping class.
Let $(S,M,T)$ be given as in Figure~\ref{fig excover}. The quiver of the 
triangulation is $Q_T$

\[\begin{tikzpicture}[yscale=.75]
\node[name=2] at (1,1.2) {2};
\node[name=3] at (2,1.2) {3};
\node[name=4] at (3,1.2) {4};
\node[name=5] at (4,0) {5};
\node[name=6] at (3,-1.2) {6};
\node[name=7] at (2,-1.2) {7};
\node[name=8] at (1,-1.2) {8};
\node[name=1] at (1.5,0) {1};
\foreach \x/\y in {
  3/1,2/3,1/2,3/4,4/5,
  6/5,7/6,8/7,7/1,1/8}
 \draw[->] (\x) -- (\y);
\end{tikzpicture}.\]
Let $\chi_1 = \chi_{1,2}\chi_{7,1}$ and $\chi_2 = \chi_{8,7}\chi_{3,1}$, given by
the red and blue lines respectively. The corresponding surface algebras $\alg_1$
and $\alg_2$ are derived equivalent by Theorem~\ref{mainthm}. The required
automorphism of the surface $f$ can be realized by a rotation of the universal
cover of $S$ that fixes a lift of the $\te1$, see Figure~\ref{fig excover}. This
map will induce the quiver automorphism given by the map on vertices
\begin{align*}
1 &\mapsto 1 & 2&\mapsto 8 & 3&\mapsto7 & 4&\mapsto6\\
5 &\mapsto 5 & 6&\mapsto 2 & 7&\mapsto3 & 8&\mapsto2.
\end{align*}
Note that the image of $\chi_1$ under this map is not $\chi_2$, but 
$(f(\chi_1),\chi_2)$ is equi-distributed.

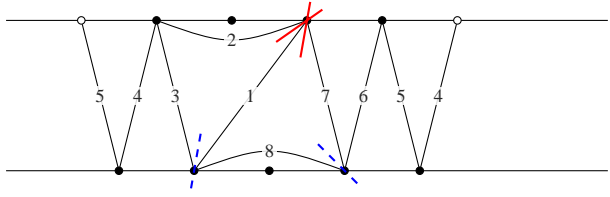
\begin{figure}
\centering

\begin{tikzpicture}
\draw (0,0) -- (8,0) (0,-2) -- (8,-2);
\foreach \x in {2,3,4,5}
  \node[name=o\x,solid] at (\x,0) {};
\foreach \x in {1,2,3,4,5}
  \node[name=i\x,solid] at (\x+.5,-2) {};
\node[name=o1,pair] at (1,0) {};
\node[name=o6,pair] at (6,0) {};
\draw 
  (o2) ..controls +(-20:1cm) and +(200:1cm) .. (o4)
    node[int] {2}
  (o2) -- (i2)
    node[int] {3}
  (o4) -- (i2)
    node[int] {1}
  (i2) ..controls +(20:1cm) and +(160:1cm) .. (i4)
    node[int] {8}
  (i4) -- (o4)
    node[int] {7}
  (i4) -- (o5)
    node[int] {6}
  (o2) -- (i1) 
    node[int] {4}
  (o5) -- (i5)
    node[int] {5}
  (i1) -- (o1)
    node[int] {5}
  (i5) -- (o6)
    node[int] {4};
\draw[bcut] 
  ($(i2)+(80:.5cm)$) -- ($(i2)-(80:.25cm)$)
  ($(i4)+(135:.5cm)$) -- ($(i4)-(135:.25cm)$);
\draw[rcut]
  ($(o4)+(215:.5cm)$) -- ($(o4)-(215:.25cm)$)
  ($(o4)+(260:.5cm)$) -- ($(o4)-(260:.25cm)$);
\end{tikzpicture}
\caption{The universal cover of the annulus. The solid red lines represent $\chi_1$, 
dashed blue lines represent $\chi_2$ from example \ref{ex mg}}\label{fig excover}
\end{figure}
\end{ex}

Theorem~\ref{thm2} does not tell us how to identify the homeomorphism of the
surface that gives rise to the graded equivalence. Naturally, we want to determine
which automorphisms of the surface determine a graded equivalence. A minimal
combinatorial description can be given if we ignore some of the surface structure
and consider the automorphism in combinatorial terms of the marked points,
boundary components and triangles. In these terms, finding automorphisms that
induce a graded equivalence is equivalent to finding permutations of the boundary
components and of the marked points such that the corresponding map on the set
of triangles sends neighboring triangles to neighboring triangles and boundary
components to boundary components. Under the permutation of boundary components, 
a component can only be sent to another component with the same local configuration
of triangles incident to the component.   Similarly, a marked point must be sent to 
a marked point with the same number and type of incident triangles, these triangles
must occur in the same order in the \orientationofthearrow direction.

Recall that we may associate a cluster algebra to a triangulated surface, see
\cite{FST}. The mapping classes of $(S,M,T)$ that correspond to graded
equivalences will correspond to cluster automorphisms, defined in \cite{ASS},
which fix (up to a permutation) the cluster corresponding to the triangulation.

\section{Derived Equivalence}
All of this work to describe the graded equivalences of surface algebras has 
been done with the goal of determining derived equivalences.  We restate a 
theorem of \cite{AO} in terms of surface algebras.

\begin{thm}[{\cite[Theorem 5.6]{AO}}]\label{thm AO}
Let $\alg_1$ and $\alg_2$ be surface algebras coming from admissible cuts
$\chi_1$ and $\chi_2$. Then $\alg_1$ and $\alg_2$ are derived equivalent if and
only if $\tilde \alg_1$ and $\tilde \alg_2$ are graded equivalent.
\end{thm}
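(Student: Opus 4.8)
The plan is to obtain this statement as a direct specialization of \cite[Theorem 5.6]{AO}; the content of the argument is therefore to verify that the data $(\alg_i,\tilde\alg_i,\chi_i)$ attached to a surface algebra is an instance of the graded-cut framework of Amiot and Oppermann. First I would recall the setup of \cite{AO}: one starts with a quiver with potential $(Q,W)$ together with a grading of the arrows for which $W$ is homogeneous of degree $1$; the associated \emph{graded} Jacobian algebra plays the role of $\tilde\alg_i$, and the \emph{cut} algebra is the finite-dimensional quotient obtained by deleting the degree-$1$ arrows and imposing the induced Jacobian relations. Theorem~5.6 of \cite{AO} asserts, under the appropriate admissibility and finiteness hypotheses, that two cut algebras are derived equivalent if and only if the two graded Jacobian algebras are graded equivalent.

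Next I would match the surface data to this picture. By the construction in Section~2, $(Q_T,W)$ is a quiver with potential in which $W$ is the sum of the oriented $3$-cycles coming from the internal triangles, and $\textup{Jac}(Q_T,W)=kQ_T/I_T$. An admissible cut $\chi_i$ selects exactly one arrow from each such $3$-cycle, so the weight $w_i$ of Definition~\ref{dfn:weight} assigns degree $1$ to precisely one arrow of each cyclic summand of $W$; hence $W$ is homogeneous of degree $1$ and $\tilde\alg_i$ is the graded Jacobian algebra of $(Q_T,W)$ for this grading. A short computation then identifies the surface algebra with the corresponding cut algebra: deleting a cut arrow $\alpha$ from a $3$-cycle $\alpha\beta\gamma$ kills the relations $\gamma\alpha$ and $\alpha\beta$ in $I_T$ and leaves $\beta\gamma=\partial_\alpha W$, so $\alg_i=kQ_T/\langle I_T\cup\chi_i\rangle$ coincides with the Amiot--Oppermann cut algebra $kQ_{T\dag}/\langle \partial_\alpha W\rangle$. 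Thus $\alg_i$ and $\tilde\alg_i$ are exactly the cut algebra and the graded algebra to which \cite[Theorem 5.6]{AO} applies.

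The remaining, and principal, task is to check that the hypotheses of \cite[Theorem 5.6]{AO} are met: the graded Jacobian algebra must be finite-dimensional, the cut must be admissible in their sense, and the cut algebra must have finite global dimension so that the tilting construction of \cite{AO} is available. Finite-dimensionality holds because $\alg_T$ is a finite-dimensional gentle algebra \cite{ABCP} and the grading does not alter the underlying algebra; the cut is admissible because each cyclic summand of $W$ receives exactly one degree-$1$ arrow; and the surface algebras $\alg_i$ are gentle of finite global dimension. I expect this bookkeeping --- reconciling the notion of admissible cut used for surfaces with the admissibility and finiteness conditions of \cite{AO} --- to be the only real obstacle, and it is essentially routine once the dictionary above is in place. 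With the hypotheses verified, \cite[Theorem 5.6]{AO} yields the biconditional, and both directions of the statement follow simultaneously.
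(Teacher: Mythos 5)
Your proposal is correct and takes essentially the same route as the paper: the paper offers no proof of this statement at all---it is presented purely as a citation of \cite[Theorem 5.6]{AO}, restated in surface-algebra language. The dictionary you spell out (the potential $W$ is homogeneous of degree $1$ for the weight of Definition~\ref{dfn:weight}, the surface algebra is the degree-zero cut algebra of the graded Jacobian algebra, and the finiteness/global-dimension hypotheses hold) is exactly the routine verification the paper leaves implicit in its setup in Sections~2 and~3.
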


We can now reformulate the theorems of section 2 and 3.

\begin{thm}\label{mainthm}
Let $\alg_1$ and $\alg_2$ be surface algebras of type $(S,M,T)$ coming from
admissible cuts $\chi_1$ and $\chi_2$ respectively. Then $\alg_1$ and $\alg_2$
are derived equivalent if there is an element $f\in \MG(S,M)$ such
that $f$ induces a quiver automorphism on $Q_T$ and $(\chi_1,f(\chi_2))$ or 
$(f(\chi_1),\chi_2)$ are equi-distributed.
\end{thm}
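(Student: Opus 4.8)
The plan is to observe that Theorem~\ref{mainthm} is essentially an immediate consequence of combining two results already established in the excerpt: Theorem~\ref{thm2}, which characterizes graded equivalence of the graded Jacobian algebras $\tilde\alg_1$ and $\tilde\alg_2$ in terms of an element $f\in\MG(S,M)$ inducing a quiver automorphism with $(\chi_1,f(\chi_2))$ or $(f(\chi_1),\chi_2)$ equi-distributed, and Theorem~\ref{thm AO} (the restatement of \cite[Theorem 5.6]{AO}), which asserts that $\alg_1$ and $\alg_2$ are derived equivalent if and only if $\tilde\alg_1$ and $\tilde\alg_2$ are graded equivalent. The whole substance of the statement has already been carried out; what remains is to splice the two equivalences together.

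First I would assume the hypothesis of the theorem, namely that there is an element $f\in\MG(S,M)$ inducing a quiver automorphism on $Q_T$ such that $(\chi_1,f(\chi_2))$ or $(f(\chi_1),\chi_2)$ is equi-distributed. Applying Theorem~\ref{thm2} directly to this hypothesis yields that $\tilde\alg_1$ and $\tilde\alg_2$ are graded equivalent. Then I would invoke the forward direction of Theorem~\ref{thm AO}: since $\tilde\alg_1$ and $\tilde\alg_2$ are graded equivalent, the surface algebras $\alg_1$ and $\alg_2$ are derived equivalent. This completes the argument, and the proof is therefore a two-line chaining of previously proved statements.

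The only point requiring any care is that Theorem~\ref{thm AO} is stated as an ``if and only if'' concerning \emph{graded equivalence} in the sense of Proposition~\ref{prop:graded equiv}, whereas Theorem~\ref{thm2} produces graded equivalence via possibly nontrivial mapping classes; I would note that these notions agree because the construction preceding Theorem~\ref{thm2} exhibits the mapping class $f$ as inducing an honest graded isomorphism $\tilde\alg_2\cong f(\tilde\alg_2)$, so that the equivalence furnished by Theorem~\ref{thm2} is precisely graded equivalence in the sense required by Theorem~\ref{thm AO}. There is no genuine obstacle here; the difficulty of the paper resides entirely in Theorem~\ref{thm1} (the well-definedness of the level function $r$ via the loop argument with $\Delta_r\rho=-\Delta_\chi\rho$) and in the mapping-class analysis of Theorem~\ref{thm2} and Theorem~\ref{thm3}, all of which this final theorem simply packages together with the black-box result of Amiot and Oppermann.
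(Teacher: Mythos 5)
Your proposal is correct and is exactly what the paper does: Theorem~\ref{mainthm} is stated as a reformulation obtained by chaining Theorem~\ref{thm2} (the mapping-class/equi-distribution criterion for graded equivalence of $\tilde\alg_1$ and $\tilde\alg_2$) with Theorem~\ref{thm AO} (derived equivalence of $\alg_1$ and $\alg_2$ if and only if graded equivalence of $\tilde\alg_1$ and $\tilde\alg_2$), with no further argument needed. Your side remark that the graded equivalence produced by Theorem~\ref{thm2} is the same notion required by Theorem~\ref{thm AO} is also consistent with the paper's treatment.
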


The proof of Theorem~\ref{thm AO} explicitly describes the tilting object 
associated to a given graded equivalence.  If we have
\[\tilde \alg_2 \xrightarrow{\ \sim \ }\bigoplus_{p\in \ZZ}
    \Hom_{\Cov(\tilde \alg_1)}\left(\bigoplus_{i=1}^n P_i\langle r_i\rangle,
                          \bigoplus_{i=1}^n P_i\langle r_i+p\rangle\right)
\]
Then $ \bigoplus_{i=1}^n F^{-r_i}P_i$ is the tilting object in $\cald^b(\cmod
\alg_1)$ that gives the derived equivalence of $\alg_1$ and $\alg_2$. Where
$F:= \mathbb{S}[-2]$ with $\mathbb{S}$ the Serre functor of $\cald^b(\cmod
\alg_1)$.

\section{Reflections of gentle algebras}
In the theory of cluster algebras, quiver mutation plays an important role.
For cluster-algebras from surfaces this mutation can be realized in the surface
as a flip of an edge in the triangulation.  In this section we will show that a
similar idea exists for surface algebras via the reflections in quivers of 
gentle algebras.  These reflections induced derived equivalences via an explicit
tilting object in module category. In contrast to the derived equivalences obtained 
via mapping classes, the derived equivalences obtained via reflections 
need not be between surface algebras of the same triangulation, in fact most are not.
Additionally, reflections allow us to describe some derived equivalences of 
surface algebras in terms of tilting modules, instead of tilting objects in the
derived category.  

\subsection{Definitions} We begin by recalling definitions.

\begin{dfn}
The mutation of $Q$ at vertex $j$, denoted $\mu_j(Q)$, is the quiver obtained
from $Q$ by the following procedure:
\begin{enum}
\item Reverse each arrow incident to $j$.
\item For all paths $i\to j\to k$ in $Q$, we introduce an arrow $i\to k$ in
      $\mu_j(Q)$. 
\item Delete all 2-cycles that may have been generated. 
\end{enum}
\end{dfn}

In a triangulated surface without punctures $(S,M,T)$, each edge $\te{}$ of the
triangulation is contained in exactly two distinct triangles that form a
quadrilateral in which $\te{}$ is a diagonal. The mutation of the triangulation,
$\mu_{\te{}}(T)$, is given by $T\setminus\{\te{}\}\cup\{\te{}'\}$ where $\te{}'$ is the other
diagonal of the quadrilateral containing $\te{}$. If $j\in Q_0$ corresponds to $\te j$,
then $\mu_j(Q)$ is the quiver of $\mu_{\te{j}}(T)$.

\begin{dfn}
A finite dimensional $k$-algebra $\alg$ is called gentle if the bound quiver
$(Q,I)$ associated to $\alg$ satisfies:
\begin{enumerate}
\item For each $i\in Q_0$, $\#\{\alpha\in Q_1:s(\alpha) = i\}\leq 2$ and
      $\#\{\alpha\in Q_1:t(\alpha) = i\}\leq 2$.
\item For each $\beta\in Q_1$, $\#\{\alpha\in Q_1:s(\beta)=t(\alpha)$ and
      $\alpha\beta\not\in I\}\leq1$ and $\#\{\gamma\in Q_1:s(\gamma)=t(\beta)$
      and $\beta\gamma\not\in I\}\leq 1$
\item The ideal $I$ is generated by paths of length 2.
\item for each $\beta\in Q_1$, $\#\{\alpha\in Q_1:s(\beta)=t(\alpha)$ and
      $\alpha\beta\in I\}\leq1$ and $\#\{\gamma\in Q_1:s(\gamma)=t(\beta)$ and
      $\beta\gamma\in I\}\leq 1$
\end{enumerate}
\end{dfn}
Surface algebras are gentle \cite{AG,DS}. For the remainder of the section we 
assume that $Q$ is a gentle quiver with relations $I$.
\begin{dfn}
Let $i$ be a vertex of $Q$ such that for each arrow $\alpha\in Q_1$ with
$s(\alpha)=i$ there exists $\beta_\alpha\in Q_1$ with $t(\beta_\alpha)=i$ and
$\beta_\alpha\alpha\not\in I$. The reflection of $Q$ at vertex $i$, denoted
$R_i(Q):=(Q_0',Q_1',s',t')$, is the quiver with relations $I'$ obtained from $Q$
as follows:
\begin{itemize}
\item The vertices and arrows of $Q'$ are the vertices and arrows of $Q$, that
      is $Q_0' =Q_0$ and $Q_1'=Q_1$, only the maps $s$ and $t$ change.
\item We define
	\begin{align*}
	s'\alpha & := 
		\begin{cases}
		  i &\IF t(\alpha) = i,\\
		  s(\beta_\alpha)& \IF s(\alpha) = i,\\
		  s(\alpha) & \text{ otherwise},
		\end{cases}\\
	t'\alpha & :=
		\begin{cases}
		s(\alpha) & \IF t(\alpha) = i,\\
		i &\IF \exists \beta\in Q_1 \text{ such that } t(\beta) = i \text{ and } 
		    s(\beta) = t(\alpha) \text{ and }\alpha\beta \in I,\\
		t\alpha & \text{ otherwise}.
		\end{cases}
	\end{align*}
\item We define $I':= I_1\cup I_2 \cup I_3$ where
 \begin{align*}
  I_1  &= \{\beta\alpha: t(\alpha) = i \text{ and } \exists \gamma\in Q_1 
    \text{ such that } \gamma\neq\alpha,
	t(\gamma) = i \text, s(\gamma) =t(\beta), \text{ and }\beta\gamma \in I\},\\
  I_2 &= \{\alpha\beta\in I:t(\beta)\neq i \text{ and } s(\beta)\neq i\},\\
  I_3 &= \{ \beta_\alpha\alpha : s(\alpha) = i\}.
  \end{align*}
\end{itemize}
Notice that the arrow $\alpha\in Q$ is also denoted $\alpha\in R_i(Q)$, the only 
difference is the definition of the source and target function. When we define the 
relations in $R_i(Q)$, we use the composition of arrows in $R_i(Q)$ but
use the original functions $s$ and $t$ from $Q$ when selecting which arrows are in a relation. 
Many examples will be given below. 
\end{dfn}

\begin{dfn}
Dually, we define the co-reflection at $i$. Let $i$ be a vertex of $Q$ such that
for each arrow $\alpha\in Q_1$ with $t(\alpha)=i$ there exists $\beta_\alpha\in
Q_1$ with $s(\beta_\alpha)=i$ and $\alpha\beta_\alpha\not\in I$. The
coreflection of $Q$ at vertex $i$, denoted $R_i^-(Q):=(Q_0',Q_1',s',t')$, is the
quiver with relations $I'$ obtained from $Q$ as follows:
\begin{itemize}
\item The vertices and arrows of $Q'$ are the vertices and arrows of $Q$, that
      is $Q_0' =Q_0$ and $Q_1'=Q_1$, only the maps $s$ and $t$ change.
\item We define
	\begin{align*}
	s'\alpha & := 
		\begin{cases}
		  t(\alpha) &\IF s(\alpha) = i,\\
		  i &\IF \exists \beta \in Q_1 \text{ such that } s(\beta)=i
		    \text{ and } \beta\alpha\in I,\\
		  s(\alpha) & \text{ otherwise}.
		\end{cases}\\
	t'\alpha & :=
		\begin{cases}
		i & \IF s(\alpha) = i,\\
		t(\beta_\alpha) &\IF t(\alpha)=i,\\
		t(\alpha) & \text{ otherwise}.
		\end{cases}
	\end{align*}
\item We define $I':= I_1\cup I_2 \cup I_3$ where
 \begin{align*}
  I_1  &= \{\alpha\beta: s(\alpha) = i \text{ and } \exists \gamma\in Q_1 
     \text{ such that } \gamma\neq\alpha,
	 s(\gamma) = i \text, t(\gamma) =s(\beta), 
	 \text{ and }\gamma\beta \in I\},\\
  I_2 &= \{\alpha\beta\in I:s(\beta)\neq i \text{ and } t(\beta)\neq i\},\\
  I_3 &= \{ \alpha\beta_\alpha : t(\alpha) = i\}.
  \end{align*}
\end{itemize}
\end{dfn}

The reflection of a quiver gives a Brenner-Butler tilt of the corresponding
algebra. Let $(Q,I)$ be the quiver of a gentle algebra $\alg$ and $(Q',I') =
R_i(Q)$, then $kQ'/I' \cong \End(T)$ where
\[ T = \tau^{-1} S_i \oplus \bigoplus_{\substack{j\in Q_0\\ j\neq i}}\alg e_j,\]
with $S_i$ the simple representation at $i$.

\subsection{Mutations and Reflections} We will show that most reflections can be
described in terms of mutations and admissible cuts.

\begin{thm}\label{prop dict}
Let $Q$ be a quiver of a surface algebra given by an admissible cut of an
algebra from a triangulated surface with quiver $\tilde Q$. 
If $i$ is not the source in $Q$ of a relation and $R_i$ is defined, then there is an 
admissible cut of $\mu_i(\tilde Q)$ 
that gives $R_i(Q)$. Dually, if $i$ is not the target of a relation and $R_i^-$
is defined, then there is an admissible cut of $\mu_i(\tilde Q)$ that gives 
$R_i^-(Q)$.
\end{thm}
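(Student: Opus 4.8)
The plan is to treat the theorem as a local statement at the edge $\te i$, reducing the desired equality $R_i(Q)=\alg_{\mu_i(T)\dag}$ to a finite check. Both the reflection $R_i$ and the mutation $\mu_i$ alter only the arrows incident to $i$ and the relations through $i$; geometrically, the flip $\mu_{\te i}(T)$ changes $T$ only inside the quadrilateral $\frakC$ having $\te i$ as a diagonal, leaving every triangle outside $\frakC$—and hence every arrow and relation of $Q$ not meeting $i$—untouched. I would therefore fix $\frakC$, declare the cut of $\mu_i(\tilde Q)$ to agree with $\cutt$ on all internal triangles outside $\frakC$, and reduce to producing and checking a cut on the at most two internal triangles of $\mu_{\te i}(T)$ lying inside $\frakC$.

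Next I would read off what the hypotheses say locally. Cutting the arrow $u\to v$ of the $3$-cycle of an internal triangle leaves the single length-two relation $v\to w\to u$, whose source is the target $v$ of the removed arrow; hence ``$i$ is not the source of a relation'' says precisely that no local cut incident to $\te i$ deletes an arrow pointing into $i$, while ``$R_i$ is defined'' supplies, for each arrow $\alpha$ out of $i$, an in-arrow $\beta_\alpha$ with $\beta_\alpha\alpha\notin I$. These two conditions restrict $\cutt$ near $\te i$ to finitely many configurations, indexed by the types—internal, basic, or corner—of the two triangles meeting $\te i$ together with the admissible choices of $\cutt$ on them, and the proof is organized as a case analysis over these pictures.

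In each configuration I would flip $\te i$ to the opposite diagonal and then exhibit the cut $\cutt'$ explicitly. The cleanest case is when both triangles at $\te i$ are internal: there the four corners of $\frakC$ remain corners of internal triangles after the flip, and one checks that re-selecting the same marked points (suitably assigned to the two new triangles) produces a cut whose surviving arrows and relations coincide with $R_i(Q)$. The verification then has two steps. For the quiver, one compares the reflected source and target maps $s',t'$, arrow by arrow, with the arrows surviving the flip and recut, using that $\mu_i$ reverses the arrows at $i$ and creates the two-paths through $i$ while $\cutt'$ deletes exactly the surplus arrows. For the relations one matches $I'=I_1\cup I_2\cup I_3$ with the surviving length-two relations of $\cutt'$ according to where $i$ sits: $I_2$ consists of the relations not meeting $i$, inherited from $\cutt$ unchanged since $\cutt'=\cutt$ outside $\frakC$; $I_3=\{\beta_\alpha\alpha:s(\alpha)=i\}$ consists of the relations with source $i$, which are exactly those created by a new triangle of $\mu_{\te i}(T)$ on which $\cutt'$ deletes an arrow pointing into $i$; and $I_1$ consists of the relations passing through $i$ in their interior.

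I expect the main obstacle to be the configurations in which one of the two triangles at $\te i$ is basic or corner. There the flip changes which triangles are internal—and hence the number of local cuts inside $\frakC$—so the naive ``preserve the marked points'' prescription fails (a selected corner may cease to lie on any internal triangle) and $\cutt'$ must be recomputed by hand in each subcase; one must also confirm that the $2$-cycles created by $\mu_i$ cancel exactly, so that no spurious arrow survives. Once the reflection statement is verified across all these cases, the dual statement for the co-reflection $R_i^-$ follows formally by passing to the opposite algebra: reversing every arrow sends $Q$ to $\tilde Q^{\mathrm{op}}$, interchanges $R_i$ with $R_i^-$ and ``source of a relation'' with ``target of a relation,'' reverses the orientation of $S$, and is compatible with both $\mu_i$ and the cutting construction, so the already-established reflection case applies verbatim.
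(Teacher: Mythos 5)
Your proposal follows essentially the same route as the paper: both reduce the statement to a local check near $i$ (the paper's remark preceding its proof makes exactly your locality point) and then run an exhaustive case analysis over the finitely many admissible local configurations, exhibiting the cut of $\mu_i(\tilde Q)$ explicitly in each one --- the paper carries this out as a ten-case picture dictionary at the quiver level, followed by a surface-level dictionary and twist proposition that mirror your organization by triangle types and cut positions, and it likewise treats the co-reflection case as formally dual. The only slip is your parenthetical claim that the flip leaves every arrow and relation of $Q$ ``not meeting $i$'' untouched --- the arrow joining the two sides of $\frakC$ that lie in a common triangle with $\te{i}$ is cancelled by the $2$-cycle deletion in $\mu_i$ and can be redirected by $R_i$, so it is not untouched --- but your own arrow-by-arrow verification inside $\frakC$, including the explicit $2$-cycle cancellation check, covers precisely these arrows, so the plan is unharmed.
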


\begin{remark}
The definition of $\mu_i$ and $R_i$ are local to the vertex $i$. Specifically,
the construction of $Q$ from the triangulation of a surface is sufficiently
restrictive that the only possible changes between $Q$ and either $\mu_i(Q)$ or
$R_i(Q)$ can occur in arrows that start or end within a two vertex neighborhood
of $i$. Hence, in the proof of the proposition it suffices to only consider the 
local configurations of $Q$ near $i$. 
\end{remark}

\begin{proof}
We only present those configurations without double arrows.  In each
configuration, we can retrieve those configurations with double edges by 
identifying the white vertices.  In the very first configuration we may also 
identify the black vertices, but we may not identify the white and black vertices
at the same time. 

 Because we only consider surface algebras of admissible cuts,
there are no overlapping relations in $Q$. This follows from that the fact that
there are no overlapping relations in $\tilde Q$ outside of the 3-cycles which
are cut in $Q$. Hence, there are 10 possible local configurations near $i$ at
which we can reflect and satisfy the assumptions of the theorem. We will provide
a dictionary for these 10 configurations. First note that if $i$ is a sink that
is not the end of any relations, then mutation and reflection have the exact same
effect on $Q$. We will not include this case below. Throughout the proof,
relations will be indicated by dashed lines.

First, assume that $i$ is the source of at least one arrow and is not the target
of any relation. Because of the restrictions on where we may reflect, we get the
following three possibilities.
\[\begin{tikzpicture}[baseline=0,yscale=.66]
\node[wert,name=A] at (0,0) {};
\node[vert,name=B] at (2,0) {};
\node[wert,name=C] at (0,-2) {};
\node[vert,name=D] at (2,-2) {};
\node[name=E] at (1,-1) {$i$};
\path[->] (A) edge (E)   (C) edge (E)    (E) edge (B) edge (D);
\path[dashed] ($(A)!.5!(E)$) edge ($(E)!.5!(B)$) 
    ($(C)!.5!(E)$) edge ($(E)!.5!(D)$);
\end{tikzpicture}\hspace{.6in}
\begin{tikzpicture}[baseline=0,yscale=.66]
\node[wert,name=A] at (0,0) {};
\node[name=B] at (1,-1) {$i$};
\node[vert,name=C] at (3,-1) {};
\node[wert,name=D] at (0,-2) {};
\path[->] (A) edge (B)   (D) edge (B)    (B) edge (C);
\path[dashed] ($(A)!.5!(B)$) edge ($(B)!.5!(C)$);
\end{tikzpicture}\hspace{.6in}
\begin{tikzpicture}[baseline=0,scale=.85]
\node[vert,name=A] at (0,-1) {};
\node[name=B] at (2,-1) {$i$};
\node[vert,name=C] at (4,-1) {};
\path[->] (A)  edge (B) (B) edge (C);
\end{tikzpicture}\]
The corresponding reflections are 
\[\begin{tikzpicture}[baseline=0,yscale=.66]
\node[wert,name=A] at (0,0) {};
\node[vert,name=B] at (2,0) {};
\node[wert,name=C] at (0,-2) {};
\node[vert,name=D] at (2,-2) {};
\node[name=E] at (1,-1) {$i$};
\path[->]  
   (A) edge[bend left] node[name=R1,pos=.5] {} (D)  
   (C) edge[preaction=outline,bend right]  node[name=R2,pos=.5] {} (B)  
   (E) edge (A) edge (C);
\path[dashed] ($(E)!.5!(A)$) edge (R1) ($(E)!.5!(C)$) edge (R2);
\end{tikzpicture}\hspace{.6in}
\begin{tikzpicture}[baseline=0,yscale=.66]
\node[wert,name=A] at (0,0) {};
\node[name=B] at (1,-1) {$i$};
\node[vert,name=C] at (3,-1) {};
\node[wert,name=D] at (0,-2) {};
\path[->] (B) edge (A) edge (D) (D) edge[bend right] node[pos=.35,name=R]{} (C);
\path[dashed] ($(B)!.5!(D)$) edge (R);
\end{tikzpicture}\hspace{.6in}
\begin{tikzpicture}[baseline=0,scale=.85]
\node[vert,name=A] at (0,-1) {};
\node[name=B] at (2,-1) {$i$};
\node[vert,name=C] at (4,-1) {};
\path[->] (B)  edge (A) (A) edge[bend right] node[pos=.25,name=R] {} (C);
\path[dashed] ($(B)!.5!(A)$) edge (R);
\end{tikzpicture}\] 
The quivers $\tilde Q$ corresponding to $Q$ before the reflection are
\[\begin{tikzpicture}[baseline=0,yscale=.66]
\node[wert,name=A] at (0,0) {};
\node[vert,name=B] at (2,0) {};
\node[wert,name=C] at (0,-2) {};
\node[vert,name=D] at (2,-2) {};
\node[name=E] at (1,-1) {$i$};
\path[->] (A) edge (E) (C) edge (E)  (E) edge (B) edge (D) (B) edge (A) 
    (D) edge (C);
\end{tikzpicture}\hspace{.6in}
\begin{tikzpicture}[baseline=0,yscale=.66]
\node[wert,name=A] at (0,0) {};
\node[name=B] at (1,-1) {$i$};
\node[vert,name=C] at (3,-1) {};
\node[wert,name=D] at (0,-2) {};
\path[->] (A) edge (B)   (D) edge (B)    (B) edge (C) (C) edge[bend right] (A);
\end{tikzpicture}\hspace{.6in}
\begin{tikzpicture}[baseline=0,scale=.85]
\node[vert,name=A] at (0,-1) {};
\node[name=B] at (2,-1) {$i$};
\node[vert,name=C] at (4,-1) {};
\path[->] (A)  edge (B) (B) edge (C);
\end{tikzpicture}\]
and mutations at $i$
\[\begin{tikzpicture}[baseline=0,yscale=.66]
\node[wert,name=A] at (0,0) {};
\node[vert,name=B] at (2,0) {};
\node[wert,name=C] at (0,-2) {};
\node[vert,name=D] at (2,-2) {};
\node[name=E] at (1,-1) {$i$};
\path[<-] 
	(A) edge (E) (C) edge (E)  (E) edge (B) edge (D)
	(D) edge[preaction=outline,bend right=45] (A)
	(B)  edge[preaction=outline,bend left=45] (C);
\end{tikzpicture}\hspace{.6in}
\begin{tikzpicture}[baseline=0,yscale=.66]
\node[wert,name=A] at (0,0) {};
\node[name=B] at (1,-1) {$i$};
\node[vert,name=C] at (3,-1) {};
\node[wert,name=D] at (0,-2) {};
\path[->] (B) edge (A)   (B) edge (D)    (C) edge (B) (D) edge[bend right] (C);
\end{tikzpicture}\hspace{.6in}
\begin{tikzpicture}[baseline=0,scale=.85]
\node[vert,name=A] at (0,-1) {};
\node[name=B] at (2,-1) {$i$};
\node[vert,name=C] at (4,-1) {};
\path[->] (B)  edge (A) (C) edge (B) (A) edge[bend right] (C);
\end{tikzpicture}\]
In each of these cases it is clear that if we cut $\mu_i(\tilde Q)$ at the
arrows $\alpha$ such that $t(\alpha) = i$, we will recover $R_i(Q)$. 

We now consider those configurations in which $i$ is the target of a relation.
There are five such configurations in which we may reflect at $i$. In these
cases we must consider a local picture that is a two vertex neighborhood of $i$.
First consider those configurations when $i$ is a sink.
\[\begin{tikzpicture}
\node[vert,name=A] at (0,0) {};
\node[vert,name=B] at (2,0) {};
\node[name=X] at (4,0) {$i$};
\path[->] (A) edge (B) (B) edge (X);
\path[dashed] ($(A)!.5!(B)$) edge[bend left] ($(B)!.5!(X)$);
\end{tikzpicture}\hspace{.5in}
\begin{tikzpicture}
\node[vert,name=A] at (0,0) {};
\node[wert,name=B] at (2,0) {};
\node[name=X] at (4,0) {$i$};
\node[wert,name=C] at (6,0) {};
\path[->] (A) edge (B) (B) edge (X) (C) edge (X) ;
\path[dashed] ($(A)!.5!(B)$) edge[bend left] ($(B)!.5!(X)$);
\end{tikzpicture}\]
\[
\begin{tikzpicture}[scale=.8]
\node[vert,name=A] at (0,0) {};
\node[wert,name=B] at (2,0) {};
\node[name=X] at (4,0) {$i$};
\node[wert,name=C] at (6,0) {};
\node[vert,name=D] at (8,0) {};
\path[->] (A) edge (B) (B) edge (X) (C) edge (X) (D) edge (C);
\path[dashed] 
	($(A)!.5!(B)$) edge[bend left] ($(B)!.5!(X)$)
	($(D)!.5!(C)$) edge[bend right] ($(C)!.5!(X)$);
\end{tikzpicture}\]
The reflection at $i$ for each configuration is
 \[\begin{tikzpicture}
\node[vert,name=A] at (0,0) {};
\node[vert,name=B] at (2,0) {};
\node[name=X] at (4,0) {$i$};
\path[->] (A) edge[bend left] node[pos=.7,name=R] {}  (X) (X) edge (B);
\end{tikzpicture}\hspace{.5in}
\begin{tikzpicture}
\node[vert,name=A] at (0,0) {};
\node[wert,name=B] at (2,0) {};
\node[name=X] at (4,0) {$i$};
\node[wert,name=C] at (6,0) {};
\path[->] (A) edge[bend left] node[pos=.7,name=R] {} (X) (X) edge (B) edge (C) ;
\path[dashed] ($(X)!.5!(C)$) edge[bend right] (R);
\end{tikzpicture}\]
\[
\begin{tikzpicture}[scale=.8]
\node[vert,name=A] at (0,0) {};
\node[wert,name=B] at (2,0) {};
\node[name=X] at (4,0) {$i$};
\node[wert,name=C] at (6,0) {};
\node[vert,name=D] at (8,0) {};
\path[->] 
	(A) edge[bend left]  node[pos=.7,name=R1] {} (X) 
	(X) edge (B) edge (C) 
	(D) edge[bend left] node[pos=.7,name=R2] {} (X);
\path[dashed] 
	($(X)!.7!(C)$) edge[bend right] (R1)
	($(X)!.7!(B)$) edge[bend right] (R2);
\end{tikzpicture}\]
The quivers $\tilde Q$ corresponding to $Q$ before the reflection are
 \[\begin{tikzpicture}
\node[vert,name=A] at (0,0) {};
\node[vert,name=B] at (2,0) {};
\node[name=X] at (4,0) {$i$};
\path[->] (A) edge (B) (B) edge (X) (X) edge[bend right] (A);
\end{tikzpicture}\hspace{.5in}
\begin{tikzpicture}
\node[vert,name=A] at (0,0) {};
\node[wert,name=B] at (2,0) {};
\node[name=X] at (4,0) {$i$};
\node[wert,name=C] at (6,0) {};
\path[->] (A) edge (B) (B) edge (X) (C) edge (X)  (X) edge[bend right] (A);
\end{tikzpicture}\]
\[
\begin{tikzpicture}[scale=.8]
\node[vert,name=A] at (0,0) {};
\node[wert,name=B] at (2,0) {};
\node[name=X] at (4,0) {$i$};
\node[wert,name=C] at (6,0) {};
\node[vert,name=D] at (8,0) {};
\path[->] (A) edge (B) (B) edge (X) (C) edge (X) (D) edge (C) 
    (X) edge[bend right] (A) edge[bend left] (D);
\end{tikzpicture}\]
The mutation at $i$ gives
 \[\begin{tikzpicture}
\node[vert,name=A] at (0,0) {};
\node[vert,name=B] at (2,0) {};
\node[name=X] at (4,0) {$i$};
\path[->] (X) edge (B) (A) edge[bend left] (X);
\end{tikzpicture}\hspace{.5in}
\begin{tikzpicture}
\node[vert,name=A] at (0,0) {};
\node[wert,name=B] at (2,0) {};
\node[name=X] at (4,0) {$i$};
\node[wert,name=C] at (6,0) {};
\path[->] 
	(X) edge (B) (A) edge[bend left] (X) (X) edge (C)  
	(C) edge[bend left=25] (A);
\end{tikzpicture}\]
\[
\begin{tikzpicture}[scale=.8]
\node[vert,name=A] at (0,0) {};
\node[wert,name=B] at (2,0) {};
\node[name=X] at (4,0) {$i$};
\node[wert,name=C] at (6,0) {};
\node[vert,name=D] at (8,0) {};
\path[->] 
	(C) edge[bend left=25] (A)
	(B) edge[preaction=outline,bend right=25] (D)
	(X) edge (B) (A) edge[bend left] (X) (X) edge (C)  (D) edge[bend right] (X);
\end{tikzpicture}\]
In each of these local configurations, if we cut the arrow(s) $\alpha$ with
$s(\alpha)\neq i\neq t(\alpha)$, then we recover $R_i(Q)$. 

If $i$ is neither a source nor a sink, and we may reflect at $i$, then we have
one of the following local configurations  
\[\begin{tikzpicture}[baseline=1cm]
\node[vert,name=A] at (0,0) {};
\node[vert,name=B] at (2,0) {};
\node[name=X] at (4,0) {$i$};
\node[vert,name=C] at (6,0) {};
\path[->] (A) edge (B) (B) edge (X) (X) edge (C) ;
\path[dashed] ($(A)!.5!(B)$) edge[bend left] ($(B)!.5!(X)$);
\end{tikzpicture}\hspace{.6in}
\begin{tikzpicture}[baseline=0,yscale=.66]
\node[wert,name=A] at (0,0) {};
\node[name=B] at (1,-1) {$i$};
\node[vert,name=C] at (3,-1) {};
\node[wert,name=D] at (0,-2) {};
\node[vert,name=E] at (-1,-3){};
\path[->] (A) edge (B)   (D) edge (B)    (B) edge (C) (E) edge (D);
\path[dashed] ($(A)!.5!(B)$) edge ($(B)!.5!(C)$) 
    ($(E)!.5!(D)$) edge[bend right] ($(D)!.5!(B)$);
\end{tikzpicture}\]
The reflections at $i$ are
\[\begin{tikzpicture}[baseline=1cm]
\node[vert,name=A] at (0,0) {};
\node[vert,name=B] at (2,0) {};
\node[name=X] at (4,0) {$i$};
\node[vert,name=C] at (6,0) {};
\path[->] (A) edge[bend left] (X) (X) edge (B) 
    (B) edge[bend right] node[name=R,pos=.3] {} (C) ;
\path[dashed] (R) edge ($(B)!.7!(X)$);
\end{tikzpicture}\hspace{.6in} 
\begin{tikzpicture}[baseline=0,yscale=.66]
\node[wert,name=A] at (0,0) {};
\node[name=X] at (1,-1) {$i$};
\node[vert,name=C] at (3,-1) {};
\node[wert,name=D] at (0,-2) {};
\node[vert,name=E] at (-1,-3){};
\path[->] 
	(X) edge (A)  edge (D) 
	(D) edge[bend right]  node[pos=.3,name=R1] {} (C) 
	(E) edge[bend left] node[pos=.5,name=R2]{} (X);
\path[dashed] ($(A)!.5!(X)$) edge[bend right] (R2) 
    (R1) edge[bend right] ($(D)!.5!(X)$);
\end{tikzpicture}\]
The quivers $\tilde Q$ corresponding to $Q$ before the reflection are
\[\begin{tikzpicture}[baseline=1cm]
\node[vert,name=A] at (0,0) {};
\node[vert,name=B] at (2,0) {};
\node[name=X] at (4,0) {$i$};
\node[vert,name=C] at (6,0) {};
\path[->] (A) edge (B) (B) edge (X) (X) edge (C) (X) edge[bend right] (A);
\end{tikzpicture}\hspace{.6in}
\begin{tikzpicture}[baseline=0,yscale=.66]
\node[wert,name=A] at (0,0) {};
\node[name=X] at (1,-1) {$i$};
\node[vert,name=C] at (3,-1) {};
\node[wert,name=D] at (0,-2) {};
\node[vert,name=E] at (-1,-3){};
\path[->] 
    (A) edge (X)   (D) edge (X)    (X) edge (C) (E) edge (D) 
    (C) edge (A) (X) edge[bend right] (E);
\end{tikzpicture}\]
The mutations at $i$ are 
\[\begin{tikzpicture}[baseline=1cm]
\node[vert,name=A] at (0,0) {};
\node[vert,name=B] at (2,0) {};
\node[name=X] at (4,0) {$i$};
\node[vert,name=C] at (6,0) {};
\path[->] (X) edge (B) (C) edge  node[auto,swap] {$\alpha$}  (X) 
    (A) edge[bend left] (X) (B) edge[bend right] (C); 
\end{tikzpicture}\hspace{.6in}
\begin{tikzpicture}[baseline=0,yscale=.66]
\node[wert,name=A] at (0,0) {};
\node[name=B] at (1,-1) {$i$};
\node[vert,name=C] at (3,-1) {};
\node[wert,name=D] at (0,-2) {};
\node[vert,name=E] at (-1,-3){};
\path[->] (B) edge (A)   (B) edge (D)    
    (C) edge  node[auto,swap] {$\alpha$}  (B)  
	(E) edge[bend left]  (B) 
	(A) edge[bend right] node[auto,swap] {$\beta$} (E) 
	(D) edge (C) ;
\end{tikzpicture}\]
In the first case we recover $R_i(Q)$ by cutting the arrow $\alpha$ with
$t(\alpha)=i$. Note that this is well-defined because there is only one cycle. In
the second case we must cut the two arrows marked $\alpha$ and $\beta$ in the
diagram.
\end{proof}

Unfortunately, this type of proof doe not really explain what is happening.  
The connection with mutation becomes more when we translate the above dictionary into the 
cut surface. Like cluster mutations, we can express reflections as an operation on the edges
in the triangulation of $(S,M,T)$. we list a local configuration at a vertex $i$
and corresponding local picture in $(S,M,T)$. The corresponding reflection at
$i$ is given to the right. The red lines represent which vertices are cut,
the line passing between edges $i$ and $j$ represents either $\chi_{i,j}$ or 
$\chi_{j,i}$ depending on the orientation of triangle.  As in the proof
of Theorem~\ref{prop dict} we do not include pictures for those 
configurations with double arrows, those are `degenerate' cases of the pictures
given.

\begin{center}
\renewcommand{\arraystretch}{1.5}
\begin{longtable}{c @{\hspace{.05in}} c}
 $Q$  & $R_i(Q)$ \\
\begin{tikzpicture}[scale=.9]
{[]
	\node[vert,name=A] at (0,-1) {};
	\node[name=B] at (1,-.65) {$i$};
	\node[vert,name=C] at (2,-1) {};
	\path[->] (A)  edge (B) (C) edge (B);
}
 { [xshift=3.3cm,xscale=.5]
	\draw (-1,0) -- (4,0)  (-1,-2) -- (4,-2);
	\foreach \x [count=\n]  in {1.5,3}
		\node[vert,name=t\n] at (\x,0) {};
	\foreach \y [count=\n] in {.5,2}
		\node[vert,name=b\n] at (\y-.5,-2) {};
	\draw (t1) edge (b1) 
	(b1) edge node[int] {$i$} (t2)
	(t2) edge (b2);
}
\end{tikzpicture} 
& \begin{tikzpicture}[scale=.9]
{[]
	\node[vert,name=A] at (0,-1) {};
	\node[name=B] at (1,-.65) {$i$};
	\node[vert,name=C] at (2,-1) {};
	\path[->] (B) edge (C) edge (A);
}
 { [xshift=3.3cm,xscale=.5]
	\draw (-1,0) -- (4,0)  (-1,-2) -- (4,-2);
	\foreach \x [count=\n]  in {1.5,3}
		\node[vert,name=t\n] at (\x,0) {};
	\foreach \y [count=\n] in {.5,2}
		\node[vert,name=b\n] at (\y-.5,-2) {};
	\draw (t1) edge (b1)  edge node[int] {$i$} (b2)
	(t2) edge (b2);
}
\end{tikzpicture}\\
\begin{tikzpicture}[scale=.9]
{ []
	\node[vert,name=A] at (0,0) {};
	\node[vert,name=B] at (2,0) {};
	\node[vert,name=C] at (0,-2) {};
	\node[vert,name=D] at (2,-2) {};
	\node[name=E] at (1,-1) {$i$};
	\path[->] (A) edge (E)   (C) edge (E)    (E) edge (B) edge (D);
	\path[dashed] ($(A)!.5!(E)$) edge ($(E)!.5!(B)$) ($(C)!.5!(E)$) edge ($(E)!.5!(D)$);
}
{ [xshift=3.3cm,xscale=.5]
	\draw (-1,0) -- (4,0)  (-1,-2) -- (4,-2);
	\foreach \x [count=\n]  in {0,1.5,3}
		\node[vert,name=T\n] at (\x,0) {};
	\foreach \y [count=\n] in {0,1.5,3}
		\node[vert,name=B\n] at (\y+.5,-2) {};
	\draw (T1) edge[bend right]  (T3) edge (B1)
		(T3) edge (B3)
		(B1) edge[bend left] (B3);
	\draw (T3) .. controls ($(T2)!.5!(B3)$) and ($(T1)!.5!(B2)$) .. (B1) node[int] {$i$};
	\draw[rcut] ($(T1)+(135:.25cm)$) -- ($(T1)+(315:.5cm)$) 
		 ($(B3)+(135:.5cm)$) -- ($(B3)+(315:.25cm)$) ;
}
\end{tikzpicture} 
& \begin{tikzpicture}[scale=.9]
{ []
	\node[vert,name=A] at (0,0) {};
	\node[vert,name=B] at (2,0) {};
	\node[vert,name=C] at (0,-2) {};
	\node[vert,name=D] at (2,-2) {};
	\node[name=E] at (1,-1) {$i$};
	\path[->]  
	   (A) edge[bend left] node[name=R1,pos=.5] {} (D)  (C) edge[preaction=outline,bend right]  node[name=R2,pos=.5] {} (B)  
	   (E) edge (A) edge (C);
	\path[dashed] ($(E)!.5!(A)$) edge (R1) ($(E)!.5!(C)$) edge (R2);
}
{ [xshift=3.3cm,xscale=.5]
	\draw (-1,0) -- (4,0)  (-1,-2) -- (4,-2);
	\foreach \x [count=\n]  in {0,1.5,3}
		\node[vert,name=T\n] at (\x,0) {};
	\foreach \y [count=\n] in {0,1.5,3}
		\node[vert,name=B\n] at (\y+.5,-2) {};
	\draw (T1) edge[bend right]  (T3) edge (B1)
		(T3) edge (B3)
		(B1) edge[bend left] (B3);
	\draw (T1) .. controls ($(T2)!.5!(B1)$) and ($(T3)!.5!(B2)$) .. (B3) node[int] {$i$};
	\draw[rcut] ($(T1)+(145:.25cm)$) -- ($(T1)+(325:.5cm)$) 
		 ($(B3)+(145:.5cm)$) -- ($(B3)+(325:.25cm)$) ;
}
\end{tikzpicture}\\
\begin{tikzpicture}[scale=.9]
{ []
	\node[vert,name=A] at (0,0) {};
	\node[name=B] at (1,-1) {$i$};
	\node[vert,name=C] at (2.5,-1) {};
	\node[vert,name=D] at (0,-2) {};
	\path[->] (A) edge (B)   (D) edge (B)    (B) edge (C);
	\path[dashed] ($(A)!.5!(B)$) edge[bend left=10] ($(B)!.5!(C)$);
}
{ [xshift=3.3cm,xscale=.5]
	\draw (-1,0) -- (4,0)  (-1,-2) -- (4,-2);
	\foreach \x [count=\n]  in {0,1.5,3}
		\node[vert,name=T\n] at (\x,0) {};
	\foreach \y [count=\n] in {.5,2}
		\node[vert,name=B\n] at (\y+.5,-2) {};
	\draw (T1) edge[bend right=20]  (T3) edge (B1)
		(T3) edge node[int] {$i$} (B1) edge (B2);
	\draw[rcut] ($(T1)+(145:.25cm)$) -- ($(T1)+(325:.5cm)$);
}
\end{tikzpicture}
& \begin{tikzpicture}[scale=.9]
 { []
	 \node[vert,name=A] at (0,0) {};
	\node[name=B] at (1,-1) {$i$};
	\node[vert,name=C] at (2.5,-1) {};
	\node[vert,name=D] at (0,-2) {};
	\path[->] (B) edge (A) edge (D) (D) edge[bend right] node[pos=.35,name=R] {} (C);
	\path[dashed] ($(B)!.5!(D)$) edge (R);
 }
 { [xshift=3.3cm,xscale=.5]
	\draw (-1,0) -- (4,0)  (-1,-2) -- (4,-2);
	\foreach \x [count=\n]  in {0,1.5,3}
		\node[vert,name=t\n] at (\x,0) {};
	\foreach \y [count=\n] in {.5,2}
		\node[vert,name=b\n] at (\y+.5,-2) {};
	\draw (t1) edge[bend right=20]  (t3) edge (b1)
		(t1) edge[bend right=10] node[int] {$i$} (b2) 
		(t3) edge (b2);
	\draw[rcut] ($(t1)+(145:.25cm)$) -- ($(t1)+(325:.5cm)$);
}
\end{tikzpicture}\\
\begin{tikzpicture}[scale=.9]
{[]
	\node[vert,name=A] at (0,-1) {};
	\node[name=B] at (1,-.65) {$i$};
	\node[vert,name=C] at (2,-1) {};
	\path[->] (A)  edge (B) (B) edge (C);
}
 { [xshift=3.3cm,xscale=.5]
	\draw (-1,0) -- (4,0)  (-1,-2) -- (4,-2);
	\foreach \x [count=\n]  in {1.5}
		\node[vert,name=t\n] at (\x,0) {};
	\foreach \y [count=\n] in {.5,2,3.5}
		\node[vert,name=b\n] at (\y-.5,-2) {};
	\draw (t1) edge (b1) edge node[int] {$i$} (b2) edge (b3);
}
\end{tikzpicture}
& \begin{tikzpicture}[scale=.9]
{[]
	\node[vert,name=A] at (0,-1) {};
	\node[name=B] at (1,-.65) {$i$};
	\node[vert,name=C] at (2.5,-1) {};
	\path[->] (B)  edge (A) (A) edge[bend right] node[pos=.25,name=R] {} (C);
	\path[dashed] ($(B)!.5!(A)$) edge (R);
}
 { [xshift=3.3cm,xscale=.5]
	\draw (-1,0) -- (4,0)  (-1,-2) -- (4,-2);
	\foreach \x [count=\n]  in {1.5}
		\node[vert,name=t\n] at (\x,0) {};
	\foreach \y [count=\n] in {.5,2,3.5}
		\node[vert,name=b\n] at (\y-.5,-2) {};
	\draw (t1) edge (b1) edge (b3)
	(b1) edge[bend left=20] node[int] {$i$} (b3);
	\draw[rcut] ($(b1)+(35:.5cm)$) -- ($(b1)+(225:.25cm)$);
}
\end{tikzpicture}\\

\begin{tikzpicture}[scale=.9]
{ []
	\node[vert,name=A] at (0,-1) {};
	\node[vert,name=B] at (1,-.65) {};
	\node[name=X] at (2,-1) {$i$};
	\path[->] (A) edge (B) (B) edge (X);
	\path[dashed] ($(A)!.5!(B)$) edge[bend right=55] ($(B)!.5!(X)$);
}
 { [xshift=3.3cm,xscale=.5]
	\draw (-1,0) -- (4,0)  (-1,-2) -- (4,-2);
	\foreach \x [count=\n]  in {1.5}
		\node[vert,name=t\n] at (\x,0) {};
	\foreach \y [count=\n] in {.5,2,3.5}
		\node[vert,name=b\n] at (\y-.5,-2) {};
	\draw (t1) edge (b1) edge (b3)
	(b1) edge[bend left=20] node[int] {$i$} (b3);
	\draw[rcut] ($(b3)+(145:.5cm)$) -- ($(b3)+(-35:.25cm)$);
}
\end{tikzpicture}
& \begin{tikzpicture}[scale=.9]
{[]
\node[vert,name=A] at (0,-1) {};
\node[vert,name=B] at (1,-.65) {};
\node[name=X] at (2,-1) {$i$};
\path[->] (A) edge node[pos=.7,name=R] {}  (X) (X) edge (B);
}
 { [xshift=3.3cm,xscale=.5]
	\draw (-1,0) -- (4,0)  (-1,-2) -- (4,-2);
	\foreach \x [count=\n]  in {1.5}
		\node[vert,name=t\n] at (\x,0) {};
	\foreach \y [count=\n] in {.5,2,3.5}
		\node[vert,name=b\n] at (\y-.5,-2) {};
	\draw (t1) edge (b1) edge node[int] {$i$} (b2) edge (b3);
}
\end{tikzpicture} \\
\begin{tikzpicture}[scale=.9]
{[]
	\node[vert,name=A] at (0,-1) {};
	\node[vert,name=B] at (1,-1.5) {};
	\node[name=X] at (2,-1) {$i$};
	\node[vert,name=C] at (3,-1) {};
	\path[->] (A) edge (B) (B) edge (X) (C) edge (X) ;
	\path[dashed] ($(A)!.5!(B)$) edge[bend left] ($(B)!.5!(X)$);
}
{ [xshift=3.3cm,xscale=.5]
	\draw (-1,0) -- (4,0)  (-1,-2) -- (4,-2);
	\foreach \x [count=\n]  in {0,1.5,3}
		\node[vert,name=T\n] at (\x,0) {};
	\foreach \y [count=\n] in {.5,2}
		\node[vert,name=B\n] at (\y+.5,-2) {};
	\draw (T1) edge[bend right=20]  (T3) edge (B1)
		(T3) edge node[int] {$i$} (B1) edge (B2);
	\draw[rcut] ($(T3)+(35:.25cm)$) -- ($(T3)+(-150:.5cm)$);
}
\end{tikzpicture}
&\begin{tikzpicture}[scale=.9]
{[yshift=-1cm]
	\node[vert,name=A] at (0,0) {};
	\node[vert,name=B] at (1,-.5) {};
	\node[name=X] at (2,0) {$i$};
	\node[vert,name=C] at (3,0) {};
	\path[->] (A) edge (X) (X) edge (B) edge (C) ;
	\path[dashed] ($(X)!.5!(C)$) edge[bend right=45] ($(X)!.5!(A)$) ;
}
 { [xshift=3.3cm,xscale=.5]
	\draw (-1,0) -- (4,0)  (-1,-2) -- (4,-2);
	\foreach \x [count=\n]  in {0,1.5,3}
		\node[vert,name=t\n] at (\x,0) {};
	\foreach \y [count=\n] in {.5,2}
		\node[vert,name=b\n] at (\y+.5,-2) {};
	\draw (t1) edge[bend right=20]  (t3) edge (b1)
		(t1) edge[bend right=10] node[int] {$i$} (b2) 
		(t3) edge (b2);
	\draw[rcut] ($(t3)+(45:.25cm)$) -- ($(t3)+(-135:.5cm)$);
} 
\end{tikzpicture} \\
\begin{tikzpicture}[scale=.9]
{[yshift=-1cm,xscale=.35]
	\node[vert,name=A] at (0,0) {};
	\node[vert,name=B] at (2,-1) {};
	\node[name=X] at (4,0) {$i$};
	\node[vert,name=C] at (6,1) {};
	\node[vert,name=D] at (8,0) {};
	\path[->] (A) edge (B) (B) edge (X) (C) edge (X) (D) edge (C);
	\path[dashed] 
		($(A)!.5!(B)$) edge[bend left] ($(B)!.5!(X)$)
		($(D)!.5!(C)$) edge[bend left] ($(C)!.5!(X)$);
}
{ [xshift=3.3cm,xscale=.5]
	\draw (-1,0) -- (4,0)  (-1,-2) -- (4,-2);
	\foreach \x [count=\n]  in {0,1.5,3}
		\node[vert,name=t\n] at (\x,0) {};
	\foreach \y [count=\n] in {0,1.5,3}
		\node[vert,name=b\n] at (\y+.5,-2) {};
	\draw (t1) edge[bend right=25]  (t3) edge (b1)
		(t3) edge (b3)
		(b1) edge[bend left=25] (b3);
	\draw (t3) .. controls ($(t2)!.5!(b3)$) and ($(t1)!.5!(b2)$) .. (b1) node[int] {$i$};
	\draw[rcut] ($(t3)+(35:.25cm)$) -- ($(t3)+(-140:.5cm)$) 
		       ($(b1)+(40:.5cm)$) -- ($(b1)+(-140:.25cm)$) ;
}
\end{tikzpicture} 
&\begin{tikzpicture}[scale=.9]
{[yshift=-1cm,xscale=.35]
\node[vert,name=A] at (0,0) {};
\node[vert,name=B] at (2,-1) {};
\node[name=X] at (4,0) {$i$};
\node[vert,name=C] at (6,1) {};
\node[vert,name=D] at (8,0) {};
\path[->] 
	(A) edge (X) 
	(X) edge (B) edge (C) 
	(D) edge (X);
\path[dashed] 
	($(X)!.7!(A)$) edge[bend left=10] ($(X)!.7!(C)$)
	($(X)!.7!(B)$) edge[bend right=10] ($(X)!.7!(D)$);
}
{ [xshift=3.3cm,xscale=.5]
	\draw (-1,0) -- (4,0)  (-1,-2) -- (4,-2);
	\foreach \x [count=\n]  in {0,1.5,3}
		\node[vert,name=t\n] at (\x,0) {};
	\foreach \y [count=\n] in {0,1.5,3}
		\node[vert,name=b\n] at (\y+.5,-2) {};
	\draw (t1) edge[bend right=25]  (t3) edge (b1)
		(t3) edge (b3)
		(b1) edge[bend left=25] (b3);
	\draw (t1) .. controls ($(t2)!.5!(b1)$) and ($(t3)!.5!(b2)$) .. (b3) node[int] {$i$};
	\draw[rcut] ($(t3)+(35:.25cm)$) -- ($(t3)+(-140:.5cm)$) 
		       ($(b1)+(40:.5cm)$) -- ($(b1)+(-140:.25cm)$) ;
}
\end{tikzpicture} \\
\begin{tikzpicture}[scale=.9]
{[yshift=-1cm]
	\node[vert,name=A] at (0,0) {};
	\node[vert,name=B] at (1,-.5) {};
	\node[name=X] at (2,0) {$i$};
	\node[vert,name=C] at (3,0) {};
	\path[->] (A) edge (B) (B) edge (X) (X) edge (C) ;
	\path[dashed] ($(A)!.5!(B)$) edge[bend left] ($(B)!.5!(X)$);
}
{ [xshift=3.3cm,xscale=.5]
	\draw (-1,0) -- (4,0)  (-1,-2) -- (4,-2);
	\foreach \x [count=\n]  in {0,1.5,2.85,3.5}
		\node[vert,name=t\n] at (\x,0) {};
	\foreach \y [count=\n] in {1.5}
		\node[vert,name=b\n] at (\y+.5,-2) {};
	\draw (t1) edge[bend right=25]  (t3) edge (b1)
		(t3) edge node[int] {$i$} (b1)
		(b1) edge[bend right] (t4);
	\draw[rcut] ($(t3)+(35:.25cm)$) -- ($(t3)+(-140:.5cm)$) ;
}
\end{tikzpicture}
&\begin{tikzpicture}[scale=.9]
{[yshift=-1cm]
	\node[vert,name=A] at (0,0) {};
	\node[vert,name=B] at (1,-.5) {};
	\node[name=X] at (2,0) {$i$};
	\node[vert,name=C] at (3,0) {};
	\path[->] (A) edge(X) 
		(X) edge (B)
		(B) edge[bend right] node[coordinate,name=R] {}  (C) ;
	\path[dashed] (R) edge[bend right=15] ($(B)!.5!(X)$);
}
{ [xshift=3.3cm,xscale=.5]
	\draw (-1,0) -- (4,0)  (-1,-2) -- (4,-2);
	\foreach \x [count=\n]  in {0,1.5,2.85,3.5}
		\node[vert,name=t\n] at (\x,0) {};
	\foreach \y [count=\n] in {1.5}
		\node[vert,name=b\n] at (\y+.5,-2) {};
	\draw (t1) edge[bend right=20]  (t3) edge (b1)
		(t1) .. controls ($(t2)!.45!(b1)$) and ($(t3)!.15!(b1)$) .. (t4) node[int,pos=.5] {$i$} 
		(b1) edge[bend right] (t4);
	\draw[rcut] ($(t4)+(35:.25cm)$) -- ($(t4)+(-140:.5cm)$) ;
}
\end{tikzpicture} \\
\begin{tikzpicture}[scale=.9]
{[yshift=0cm]
	\node[vert,name=A] at (0,0) {};
	\node[name=B] at (1,-1) {$i$};
	\node[vert,name=C] at (2.25,-1) {};
	\node[vert,name=D] at (0,-2) {};
	\node[vert,name=E] at (-.75,-1) {};
	\path[->] (A) edge (B)   (D) edge (B)    (B) edge (C) (E) edge (D);
	\path[dashed] ($(A)!.5!(B)$) edge[bend left=15] ($(B)!.5!(C)$) 
			  ($(E)!.5!(D)$) edge[bend left=15] ($(D)!.5!(B)$);
}
{ [xshift=3.3cm,xscale=.5]
	\draw (-1,0) -- (4,0)  (-1,-2) -- (4,-2);
	\foreach \x [count=\n]  in {0,1.5,3}
		\node[vert,name=t\n] at (\x,0) {};
	\foreach \y [count=\n] in {0,1.5,3}
		\node[vert,name=b\n] at (\y+.5,-2) {};
	\draw (t1) edge[bend right=25]  (t3) edge (b1)
		(t3) edge (b3)
		(b1) edge[bend left=25] (b3);
	\draw (t3) .. controls ($(t2)!.5!(b3)$) and ($(t1)!.5!(b2)$) .. (b1) node[int] {$i$};
	\draw[rcut] ($(t1)+(135:.25cm)$) -- ($(t1)+(-45:.5cm)$) 
		       ($(b1)+(40:.5cm)$) -- ($(b1)+(-140:.25cm)$) ;
}
\end{tikzpicture}
&\begin{tikzpicture}[scale=.9]
{ []
	\node[vert,name=A] at (0,0) {};
	\node[name=X] at (1,-1) {$i$};
	\node[vert,name=C] at (2.25,-1) {};
	\node[vert,name=D] at (0,-2) {};
	\node[vert,name=E] at (-.75,-1) {};
	\path[->] 
		(X) edge (A)  edge (D) 
		(D) edge[bend right]  node[coordinate,pos=.3,name=R1] {} (C) 
		(E) edge node[coordinate,pos=.5,name=R2]{} (X);
	\path[dashed] ($(A)!.5!(X)$) edge[bend right] (R2) (R1) edge[bend right] ($(D)!.5!(X)$);
}
{ [xshift=3.3cm,xscale=.5]
	\draw (-1,0) -- (4,0)  (-1,-2) -- (4,-2);
	\foreach \x [count=\n]  in {0,1.5,3}
		\node[vert,name=t\n] at (\x,0) {};
	\foreach \y [count=\n] in {0,1.5,3}
		\node[vert,name=b\n] at (\y+.5,-2) {};
	\draw (t1) edge[bend right=25]  (t3) edge (b1)
		(t3) edge (b3)
		(b1) edge[bend left=25] (b3);
	\draw (t1) .. controls ($(t2)!.65!(b1)$) and ($(t2)!.5!(b3)$) .. (b3) node[int] {$i$};
	\draw[rcut] ($(t1)+(140:.25cm)$) -- ($(t1)+(-40:.5cm)$) 
		       ($(b1)+(45:.5cm)$) -- ($(b1)+(-135:.25cm)$) ;
}
\end{tikzpicture}

\end{longtable}
\end{center}

\begin{dfn}
Let $(S,M,T)$ be a triangulated surface and $\te{}\in T$ the diagonal of a 
rectangle with vertices $abcd$ such that the endpoints of $\te{}$ are at $b$ and $d$.
We define $\te{}^\perp$ to be the arc that is the other diagonal of $abcd$. A
\df{clockwise twist} of $\te{}$ is an free isotopy $\Phi\colon S\times[0,1] \to S$
with $\te{}^\perp$ such that the endpoints of $\Phi(\te{},t)$ are contained in the
edges $bc$ and $ad$ for each $t$. Similarly, a \df{counterclockwise twist} is
given by a free isotopy $\Phi$ such that the endpoints of $\Phi(\te{},t)$ are
contained in the edges $ab$ and $cd$.
\end{dfn}

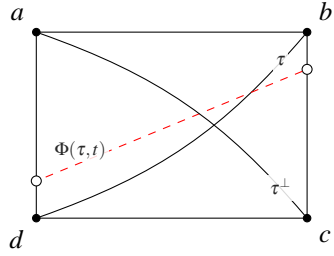
\begin{figure}[h]
\centering
\begin{tikzpicture}[scale=.9]
\draw (0,0) -- (4,0) node[pos=0,solid,name=u,label=135:$a$] {}
            -- (4,-2.75) node[pos=0,solid,name=v,label=45:$b$] {} node[pos=.2,name=v',shape=circle,draw,scale=.4,fill=white] {}
            -- (0,-2.75) node[pos=0,solid,name=x,label=-45:$c$] {}
            -- (0,0) node[pos=0,solid,name=y,label=-135:$d$] {} node[pos=.2,name=y',shape=circle,draw,scale=.4,fill=white] {};
\path (v) edge[bend left=15] node[int,pos=.1] {$\te{}$} (y) ;
\draw[dashed,red] (v') -- (y') node[int,pos=.85,above,text=black] {$\Phi(\te{},t)$};
\path (u) edge[bend left=15] node[int,pos=.9] {$\te{}^\perp$} (x) ;

\end{tikzpicture}
\caption{The clockwise twist of $\te{}$.  The dashed line represents $\Phi$ at time $t$.}\label{fig:twist}
\end{figure}

We can view the twist operation as an operation on the triangulation $(S,M,T)$, 
The twist at $\te{}$ produces a new triangulation $(S,M,T')$ which 
differs from $T$ at only $\te{}$. Depending on the types of edges bounding the 
rectangle containing $\te{}$, the types of triangles defined by $T'$ may be 
different than the types of triangles defined by $T$. For example, if the vertex $i$,
corresponding to $\te i$ is a sink and the end of a relation, then the rectangle 
containing $\te{i}$  has exactly one internal triangle while the rectangle
containing the twist of $\te i$, $\te{i}^\perp$, does not contain an internal triangle,
see the dictonary table above.  

Using the above dictionary we have the following proposition.

\begin{prop}\label{prop:dict}
Let $(S,M,T)$ be a triangulated surface and $\cutt$ an admissible cut of 
$(S,M,T)$.  Let $\te{}$ be an arc of $T$ contained in a rectangle $abcd$ such 
that $\te{}$ is not the source of a relation in $Q_{T\dag}$. Then the (co-)
reflection at $\te{}$ is given by a (counter-) clockwise twist $\Phi$ which does 
not pass through any local cut of $\cutt$. Further, if the twist results in 
at least one internal triangle and
\begin{enum}
\item if the original cut vertices of $abcd$ are still contained in internal triangles,
 the local cuts in the rectangle containing $\te{}$ does not change vertices; or,
\item if the original cut vertex of $abcd$ is no longer contained in an internal triangle,
the new cut is incident to $\Phi(\te{},1)$ at the same endpoint of $\te{}$ as the cut incident 
to $\Phi(\te{},0)$.
\end{enum}

Otherwise the (co-) refection does not result in any internal triangles, so $\cutt$ 
has one less local cut.\qed
\end{prop}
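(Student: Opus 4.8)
The plan is to read the proposition directly off the dictionary table that precedes it, using Theorem~\ref{prop dict} for the quiver-level content and the interpretation of quiver mutation as an edge flip for the geometry. Recall that Theorem~\ref{prop dict} asserts that, when $\te{}$ is not the source of a relation, $R_i(Q)$ is produced from $\mu_i(\tilde Q)$ by a specified admissible cut (and dually for $R_i^-$ when $\te{}$ is not the target of a relation). On the surface the mutation $\mu_i$ flips the diagonal $\te{}$ of the rectangle $abcd$ to the opposite diagonal $\te{}^{\perp}$, and this flip is realized by an isotopy of the diagonal. The only two such isotopies, up to reparametrization, are the clockwise and counterclockwise twists of Figure~\ref{fig:twist}. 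First I would fix the convention that the clockwise twist moves the $b$-endpoint of $\te{}$ along $bc$ and the $d$-endpoint along $ad$, terminating near $c$ and $a$ so that $\Phi(\te{},1)=\te{}^{\perp}$, and verify on a single configuration that this is the twist matching $R_i$; the dual symmetry of the reflection and coreflection definitions then forces the counterclockwise twist to match $R_i^-$.

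Next I would establish the avoidance claim. A local cut is a red segment that bisects a marked point of $abcd$ between two of its incident edges, and the hypothesis that $\te{}$ is not the source of a relation in $Q_{T\dag}$ restricts, row by row in the dictionary, which of these segments can occur. In each row the red segment sits on the part of $abcd$ that the clockwise twist sweeps away from: the endpoints of $\Phi(\te{},t)$ travel only along $bc$ and $ad$, whereas the cut segments incident to $\te{}$ point from $b$ and $d$ toward the complementary sides. Hence $\Phi$ never crosses a local cut of $\cutt$, which gives the first assertion.

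The cut-placement rules I would obtain by transporting the arrow-level cut of Theorem~\ref{prop dict} across the flip. In the configurations where $i$ is a source one cuts the arrows $\alpha$ with $t(\alpha)=i$, and in the configurations where $i$ is the target of a relation one cuts the arrows $\alpha$ with $s(\alpha)\neq i\neq t(\alpha)$; reading these arrows as triangles of the cut surface produces the red segments drawn on $\te{}^{\perp}$ in each row. When the flip leaves the internal triangles at the original cut vertices of $abcd$ intact, the cut arrows are unchanged, so the segments stay at the same marked points, which is case~(1). When the flip removes a cut vertex from every internal triangle, the cut arrow is replaced by the corresponding arrow of $\mu_i(\tilde Q)$, and tracing which endpoint of $\te{}^{\perp}$ this arrow meets shows the new cut is incident to $\Phi(\te{},1)$ at the same endpoint of $\te{}$ that carried the cut on $\Phi(\te{},0)$, which is case~(2). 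Finally, in the configuration where $i$ is a sink ending a relation, the rectangle containing $\te{}$ has a single internal triangle that the flip destroys without creating a new one, so there is no arrow left to cut there and $\cutt$ loses exactly one local cut, giving the remaining clause.

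The hard part will be the orientation bookkeeping: confirming uniformly across all ten configurations, together with their degenerate double-arrow specializations recovered by identifying the white (and, in the first configuration, black) vertices as in the proof of Theorem~\ref{prop dict}, that the twist compatible with $R_i$ is always the clockwise one and that this twist always misses the cut. This is exactly where the hypothesis that $\te{}$ is not the source of a relation does the work, since it is what forces every incident cut onto the side of $abcd$ complementary to the swept region; for the degenerate cases it then remains only to check that the vertex identifications do not push a cut onto a swept side.
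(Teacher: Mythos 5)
Your proposal is correct and follows essentially the same route as the paper: the paper offers no separate argument for Proposition~\ref{prop:dict} (it is stated with a \qed immediately after the dictionary), the intended proof being precisely the case-by-case inspection of that table, whose entries are themselves justified by the proof of Theorem~\ref{prop dict} translating reflections into mutations plus admissible cuts. Your additional bookkeeping --- fixing the clockwise convention from Figure~\ref{fig:twist}, deducing the coreflection case by duality, and checking the degenerate double-arrow configurations via vertex identification --- is exactly the verification the paper leaves implicit.
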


In most cases the local cuts do not change vertices.  A change in the location
of a local cut only occurs when the internal triangle it associated with is
destroyed by the reflection. The reflection need not create a new internal 
triangle, but when it does this new internal triangle will have a local cut. 

\begin{cor}
Let $(S,M,T)$ and $(S,M,T')$ be two triangulations of the same unpunctured surface and
$\alg$ and $\alg'$ surface algebras corresponding to admissible cuts
$\cutt$ and $\cutt'$ respectively.  If there is a sequence of reflections of the type
described in the above dictionary such that $(\cutt,\cutt')$ are equi-distributed, then 
$\alg$ and $\alg'$ are derived equivalent.
\end{cor}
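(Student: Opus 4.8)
The plan is to build the derived equivalence $\alg\sim\alg'$ by chaining together two kinds of equivalence and controlling the underlying triangulation as we go. The first kind is supplied by the reflections themselves: as recalled above, a reflection $R_i$ realizes a Brenner--Butler tilt, $kQ'/I'\cong\End(T)$ with tilting object $T=\tau^{-1}S_i\oplus\bigoplus_{j\neq i}\alg e_j$, so that $\alg$ and $R_i(\alg)$ are derived equivalent by standard tilting theory. The second kind is the equivalence coming from equi-distribution via Theorem~\ref{mainthm}. The real content of the proof is to splice these together while tracking how the triangulation and the cut move.

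First I would run the given sequence of reflections. By Theorem~\ref{prop dict} (and its dual for coreflections), each reflection of the type listed in the dictionary sends a surface algebra to another surface algebra: applying $R_i$ to the cut $\cutt$ of a triangulation with quiver $\tilde Q$ produces an admissible cut of $\mu_i(\tilde Q)$. By Proposition~\ref{prop:dict} this mutation is realized in the surface as a clockwise (resp.\ counterclockwise) twist of the edge $\te i$, i.e.\ a flip in the triangulation, and the proposition records exactly how the local cuts are transported. Hence the whole sequence yields a chain of surface algebras, consecutive members of which are derived equivalent by the tilt above, whose underlying triangulations are obtained from $T$ by the corresponding flips. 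Composing these derived equivalences shows that $\alg$ is derived equivalent to a surface algebra $\alg''$, namely an admissible cut $\cutt''$ of the triangulation $T'$ reached by the flips.

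It then remains to pass from $\alg''$ to $\alg'$. Both are surface algebras of the \emph{same} triangulated surface $(S,M,T')$, which has genus $0$ as throughout. The hypothesis is that the reflection sequence can be chosen so that the transported cut $\cutt''$ and $\cutt'$ are equi-distributed. Applying Theorem~\ref{mainthm} with $f$ the identity mapping class (which induces the identity automorphism of $Q_{T'}$), the equi-distribution of $(\cutt'',\cutt')$ gives a derived equivalence $\alg''\sim\alg'$; equivalently, Theorem~\ref{thm1} provides a graded equivalence via the identity, which Theorem~\ref{thm AO} promotes to a derived equivalence. Composing with $\alg\sim\alg''$ yields $\alg\sim\alg'$, as desired.

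The main obstacle is the bookkeeping in the middle step. One must check that every reflection in the sequence stays of the type covered by the dictionary---in particular that the relevant vertex $\te i$ is never the source of a relation in the current cut quiver, so that $R_i$ is defined and Theorem~\ref{prop dict} applies---and one must follow the local cuts through the moves in which a twist destroys or creates an internal triangle. These are precisely cases (1) and (2) of Proposition~\ref{prop:dict}, where the number or position of local cuts can change; handling them correctly is what guarantees both that the flips carry $T$ to $T'$ and that the transported cut $\cutt''$ ends up equi-distributed with $\cutt'$, so that the final application of Theorem~\ref{mainthm} is legitimate.
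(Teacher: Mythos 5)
Your proposal is correct and takes essentially the same approach as the paper, whose entire proof is the single remark that the result ``follows immediately from Theorem~\ref{mainthm} and Proposition~\ref{prop:dict}.'' You have simply made explicit the chain the paper leaves implicit: each dictionary reflection is a Brenner--Butler tilt (hence a derived equivalence) carrying a surface algebra to a surface algebra of the flipped triangulation as tracked by Proposition~\ref{prop:dict}, and the final step from the transported cut to $\cutt'$ is Theorem~\ref{mainthm} applied with the identity mapping class.
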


\begin{proof}
This follows immediately from Theorem~\ref{mainthm} and Proposition~\ref{prop:dict}.
\end{proof}

\subsection{Reflections in a strip}  
Throughout the remainder of this section we fix $S$ to be an annulus. We use the above 
dictionary to provide an explicit method to construct a sequence of derived equivalences 
between surface algebras of $S$.  In particular, we re-prove Theorem~\ref{thm1} for the annulus.  
This proof gives a more explicit construction of the derived equivalence in terms
of module categories and tilting than is obtained via the direct application of 
Theorem~\ref{thm1}.
\begin{dfn}\label{df:phi-T}
Let $\chi$ be a cut of the triangulation $(S,M,T)$, $\bcpt$ be a boundary component
of $S$ and $\triangle$  a triangle in $T$.  We set $\cutt(\bcpt)$ to be the number
of local cuts in $\cutt$ on $B$ and $\cutt_{\triangle}(\bcpt)$ the number of local 
cuts in $\cutt$ on $\bcpt$ contained in $\triangle$.  
\end{dfn}

Note that while 
$0\leq\cutt(\bcpt)\leq n$, where $n$ is the number of internal triangles, we always 
have $\cutt_\bcpt(\triangle)$ is either zero or one.

\begin{lemma} Fix a boundary component $\bcpt$ and cuts $\cutt_1$ and $\cutt_2$ 
such that $\cutt_1(\bcpt)=\cutt_2(\bcpt)$.  
Define $D=\{ \triangle: \cutt_{1,\triangle}(\bcpt)\neq \cutt_{2,\triangle}(\bcpt)\}$.  
Then $\#D=2m$ for some $m\in \NN$. Further for each triangle $\triangle\in D$ there 
is a corresponding triangle
$\triangle'$ with $\cutt_{1,\triangle}(B) = \cutt_{2,\triangle'}(B)$.
\end{lemma}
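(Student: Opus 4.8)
The plan is to treat this as a purely combinatorial counting statement, using the fact recorded just after Definition~\ref{df:phi-T} that $\cutt_{i,\triangle}(\bcpt)\in\{0,1\}$ for every triangle $\triangle$, together with the additivity $\cutt_i(\bcpt)=\sum_\triangle \cutt_{i,\triangle}(\bcpt)$ coming directly from the definition of $\cutt_i(\bcpt)$ as the total number of local cuts of $\cutt_i$ on $\bcpt$.

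First I would partition the disagreement set $D$ according to \emph{which} of the two cuts carries the local cut on $\bcpt$. Since each $\cutt_{i,\triangle}(\bcpt)$ is either $0$ or $1$, a triangle belongs to $D$ exactly when the two indicators differ, so setting
\[
 D_1=\{\triangle : \cutt_{1,\triangle}(\bcpt)=1,\ \cutt_{2,\triangle}(\bcpt)=0\},\qquad
 D_2=\{\triangle : \cutt_{1,\triangle}(\bcpt)=0,\ \cutt_{2,\triangle}(\bcpt)=1\}
\]
yields a disjoint union $D=D_1\sqcup D_2$.

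Next I would bring in the hypothesis $\cutt_1(\bcpt)=\cutt_2(\bcpt)$. Writing each total as a sum over all triangles and subtracting, the triangles outside $D$ contribute equally to both sides and cancel, so
\[
 0=\cutt_1(\bcpt)-\cutt_2(\bcpt)
  =\sum_{\triangle\in D}\bigl(\cutt_{1,\triangle}(\bcpt)-\cutt_{2,\triangle}(\bcpt)\bigr)
  =\#D_1-\#D_2 .
\]
Hence $\#D_1=\#D_2=:m$, and $\#D=\#D_1+\#D_2=2m$, which is the first assertion. For the matching I would simply use $\#D_1=\#D_2$: fix any bijection $\sigma\colon D_1\to D_2$, and for $\triangle\in D_1$ set $\triangle'=\sigma(\triangle)$, so that $\cutt_{1,\triangle}(\bcpt)=1=\cutt_{2,\triangle'}(\bcpt)$; for $\triangle\in D_2$ set $\triangle'=\sigma^{-1}(\triangle)$, so that $\cutt_{1,\triangle}(\bcpt)=0=\cutt_{2,\triangle'}(\bcpt)$. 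In either case $\cutt_{1,\triangle}(\bcpt)=\cutt_{2,\triangle'}(\bcpt)$, as required.

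There is no genuine obstacle here; the statement is essentially a parity consequence of matching two $\{0,1\}$-valued functions with equal total sums. The only points requiring care are the clean split of the sum over triangles in and out of $D$, and reading off that membership in $D$ is equivalent to disagreement of the two indicators, which is exactly where the fact $\cutt_{i,\triangle}(\bcpt)\in\{0,1\}$ is used.
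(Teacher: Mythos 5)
Your proof is correct and follows essentially the same route as the paper's: both write $\cutt_i(\bcpt)$ as a sum of $\{0,1\}$-valued indicators over triangles, restrict the vanishing difference to $D$, and deduce the pairing and the parity of $\#D$. Your version merely makes explicit (via the split $D=D_1\sqcup D_2$ and the bijection $\sigma$) the pairing step that the paper asserts with ``it follows,'' which is a welcome tightening but not a different argument.
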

\begin{remark}
Because of the restriction that $\cutt_1(\bcpt)=\cutt_2(\bcpt)$ and that $S$ is the 
annulus, the set $D$ does not depend on $B$.
\end{remark}
\begin{proof}
We claim that we can pair up all of the triangles in $D$, that is there is some 
bijection $D \to D$ with no fixed points such that 
$\chi_{1,\triangle}(B)=\chi_{2,\triangle'}(B)$.  Notice that we can write  
$\cutt_1(\bcpt) = \sum_{\triangle} \cutt_{1,\triangle}(B)$ where we sum over those 
triangles $\triangle$ incident to $B$, similarly for $\cutt_{2}(\bcpt)$. Then we 
have
\[
0 = \chi_1(B) - \chi_2(B) = \sum_{\triangle} \cutt_{1,\triangle}(B) - \cutt_{2,\triangle}(B).
\]
Further, we can restrict the sum to only those triangles in $D$ because we clearly get cancellation 
for those triangles not in $D$. 
\[
0 = \chi_1(B) - \chi_2(B) = \sum_{\triangle\in D} \cutt_{1,\triangle}(B) - \cutt_{2,\triangle}(B).
\]
It follows that for each triangle $\triangle\in D$ there is a distinct corresponding
$\triangle'$ with $\cutt_{1,\triangle}(B) = \cutt_{2,\triangle'}(B)$ and hence $\#D = 2m$ for some
$m\in \NN$. 
\end{proof}

In the subsequent lemmas we assume that the algebras $\alg_1$ and $\alg_2$ come 
from admissible cuts $\cutt_1$ and $\cutt_2$ respectively.  We further assume that 
$\#D =2$ and $\cutt_1(\bcpt) = \cutt_2(\bcpt)$. 
Set $D=\{\triangle_1,\triangle_2\}$. These lemmas will form the base step in the 
induction argument of Corollary~\ref{thm:last}. 
Note that a triangle $\triangle$ is in $D$ if the local cut of $\cutt_1$ in 
$\triangle$ changes boundary components when we consider $\cutt_2$.  The goal in 
each lemma is to focus on one triangle $\triangle$ in $D$ and find a sequence of 
reflections that allows us to swap the the local cut in $\triangle$ from one 
boundary component to the other. 
\begin{lemma}\label{lem:0tri}
If $\triangle_1$ shares an edge with 
$\triangle_2$, then $\alg_1$ is derived equivalent to $\alg_2$.
\end{lemma}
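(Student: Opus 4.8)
The plan is to reduce everything to the quadrilateral cut out by the shared edge and to move the offending cut across it by a short sequence of reflections, each of which is a derived equivalence by Theorem~\ref{prop dict}. Write $\te{}$ for the edge shared by $\triangle_1$ and $\triangle_2$, so that $\triangle_1\cup\triangle_2$ is a quadrilateral with $\te{}$ as a diagonal. Since $\#D=2$ and $\cutt_1(\bcpt)=\cutt_2(\bcpt)$, after relabeling the local cut of $\cutt_1$ in $\triangle_1$ lies on $\bcpt$ while that of $\cutt_2$ in $\triangle_1$ does not, and the reverse holds in $\triangle_2$; every other local cut of $\cutt_1$ and $\cutt_2$ agrees. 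Thus the entire discrepancy between $\alg_1$ and $\alg_2$ is confined to this quadrilateral, and the task is to realize, by reflections, the relocation of the cut in $\triangle_1$ from $\bcpt$ to the opposite boundary component (and correspondingly in $\triangle_2$).

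First I would fix orientation conventions and read off, from the conventions of Figure~\ref{fig alt presentation}, exactly which arrows of the cut quiver are removed by $\cutt_1$ inside $\triangle_1$ and $\triangle_2$. Because $\triangle_1,\triangle_2\in D$ are internal, the vertex $\te{}$ occurs in one of the interior configurations appearing in the proof of Theorem~\ref{prop dict}, and the base-case hypothesis $\#D=2$ guarantees that $\te{}$ is not the source of a relation; hence $R_{\te{}}$ (or, according to the orientation read off above, its dual $R^-_{\te{}}$) is defined. Applying Theorem~\ref{prop dict} produces a derived equivalence $\alg_1\sim\alg'$, where $\alg'$ is a surface algebra for an admissible cut $\cutt'$ of the flipped triangulation $\mu_{\te{}}(T)$.

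Next I would identify $\cutt'$ geometrically by Proposition~\ref{prop:dict}: the reflection is realized by a (counter-)clockwise twist $\Phi$ of $\te{}$ that passes through no local cut, so every cut outside the quadrilateral is left untouched, while inside, case~(2) of that proposition relocates the cut whose internal triangle is destroyed to the matching endpoint on $\Phi(\te{},1)$. Tracking this relocation shows that $\Phi$ slides the local cut in $\triangle_1$ off of $\bcpt$; performing the companion twist that returns the diagonal to $\te{}$ (again a derived equivalence by Theorem~\ref{prop dict}) restores the triangulation $T$ and leaves the relocated cut in place. The composite therefore carries $\cutt_1$ to a cut that agrees with $\cutt_2$ on $\bcpt$ in both $\triangle_1$ and $\triangle_2$, and hence, by the bookkeeping of the preceding lemma, equals $\cutt_2$. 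By transitivity of derived equivalence, $\alg_1$ and $\alg_2$ are derived equivalent.

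The main obstacle I anticipate is the case bookkeeping in the middle step: there are a few orientation sub-cases for how $\triangle_1$ and $\triangle_2$ meet along $\te{}$ and for which of the two cuts sits on $\bcpt$, and in each one must confirm both that the reflection actually defined is the one matching the dictionary entry and that $\Phi$ moves precisely the intended cut to precisely the endpoint prescribed by $\cutt_2$, without creating or destroying any further internal triangle. Once each quadrilateral configuration is matched to its dictionary entry, every sub-case is a direct application of Proposition~\ref{prop:dict}; alternatively, rather than checking equality on the nose, after the twists one may simply observe that the resulting cut is equi-distributed with $\cutt_2$ and conclude by Theorem~\ref{mainthm} (equivalently, the corollary following Proposition~\ref{prop:dict}).
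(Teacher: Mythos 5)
Your core move --- flip the shared edge twice, reading each flip as a reflection via Theorem~\ref{prop dict} and tracking cuts via Proposition~\ref{prop:dict} --- is the same device the paper uses, but you have omitted the step that makes it work, and without it the plan genuinely fails. Label the quadrilateral $t_1t_3b_3b_1$ with $\te{i}=t_3b_1$, $\triangle_1=t_1t_3b_1$, $\triangle_2=t_3b_1b_3$. The hypothesis only forces the cut of $\cutt_1$ in $\triangle_1$ to lie on $\bcpt$; it may sit at $t_1$, the marked point of $\bcpt$ \emph{not} on $\te{i}$ (and likewise the cut in $\triangle_2$ may sit at $b_3$). Suppose both cuts are in these ``far'' positions. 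Then the relations of $\alg_1$ are of the form $[\mathrm{left}]\to i\to[\mathrm{top}]$ and $[\mathrm{right}]\to i\to[\mathrm{bottom}]$ (the second dictionary entry), and a direct check of $R_i$ shows that after the first flip both relations \emph{start} at $i$ and $i$ has no incoming arrows at all. Consequently the second reflection at $\te{i}$ is not defined: the existence condition for $R_i$ (each arrow out of $i$ needs a non-relation composable arrow into $i$) fails, as does the hypothesis of Theorem~\ref{prop dict} that $i$ is not the source of a relation; the only legal move at $i$ is the co-reflection, which undoes the first flip. The mixed cases (exactly one cut incident to $\te{i}$, the ninth dictionary entry) break the same way. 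So your ``companion twist that returns the diagonal'' does not exist in three of the four cut positions, there is no end configuration to compare with $\cutt_2$, and the fallback via equi-distribution and Theorem~\ref{mainthm} cannot rescue the argument (equi-distribution compares cuts of the \emph{same} triangulation, and after one flip you are on $\mu_i(T)$).

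The missing idea is exactly the first sentence of the paper's proof: before touching $\te{i}$, one uses the fourth and fifth dictionary entries --- double reflections at the \emph{outer} edges of $\triangle_1$ and $\triangle_2$, which destroy and re-create an internal triangle and thereby slide its local cut from one endpoint of the reflected edge to the other --- to bring both cuts of $\cutt_1$ to the endpoints $t_3$ and $b_1$ of $\te{i}$. Only in that configuration (both relations ending at $i$, the seventh dictionary entry) is $R_iR_i$ defined, and then it performs the swap. Note also that your description of the mechanism is off even in the good case: there the first flip keeps both new triangles internal, so case (1) of Proposition~\ref{prop:dict} applies and no cut changes its marked point; the swap appears only because, after the second flip restores $T$, the upper triangle picks up the stationary cut at $b_1$ and the lower one the cut at $t_3$. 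Case (2), which you invoke, never enters here.
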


\begin{proof}
Let $\te i$ be the edge shared between $\triangle_1$ and $\triangle_2$. The fourth 
and fifth reflections in the dictionary show us that there are always suitable 
reflections (or co-reflections) such that both local cuts are incident to $\te i$. 
Then $R_iR_i$ is a sequence of reflections that send either $(S,M,T)$ to $(S,M',T')$ 
or vice versa.
\end{proof}

\begin{lemma}\label{lem:1tri}
If there is exactly one triangle separating $\triangle_1$ and $\triangle_2$, then 
$\alg_1$ and $\alg_2$ are derived equivalent. 
\end{lemma}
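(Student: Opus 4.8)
The plan is to reduce this case to the adjacent case of Lemma~\ref{lem:0tri} by performing a single reflection. Each reflection is realized in the surface as a twist of an edge (Proposition~\ref{prop:dict}) and, being a Brenner--Butler tilt, induces a derived equivalence; a suitably chosen twist at the separating triangle makes the two triangles carrying the differing cuts adjacent, after which Lemma~\ref{lem:0tri} applies.

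First I would fix notation. Let $\triangle_0$ be the unique triangle separating $\triangle_1$ and $\triangle_2$, and let $\te{}$ be the interior edge along which $\triangle_0$ meets $\triangle_2$. Since $\triangle_0\notin D$, the cuts $\chi_1$ and $\chi_2$ agree on $\triangle_0$. I would reflect at $\te{}$: after checking, using the standing hypotheses that $S$ is an annulus and $\#D=2$, that $\te{}$ is not the source of a relation and that the appropriate (co-)reflection is defined, the relevant entry of the dictionary applies to each of $\alg_1$ and $\alg_2$. By Proposition~\ref{prop:dict} the corresponding twist $\Phi$ does not pass through a local cut, produces a new triangulation $(S,M,T')$ differing from $T$ only at $\te{}$, and transports $\chi_i$ to an admissible cut $\chi_i'$ of $T'$ with $\chi_i'(\bcpt)=\chi_i(\bcpt)$. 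Because the twist is a reflection, $\alg_i$ is derived equivalent to the surface algebra $\alg_i'$ determined by $\chi_i'$ on $T'$.

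Next I would verify that in $T'$ the two triangles on which $\chi_1'$ and $\chi_2'$ differ now share an edge and that the differing set still has size two. Then Lemma~\ref{lem:0tri}, applied to $\alg_1'$ and $\alg_2'$ on the triangulation $T'$, gives that $\alg_1'$ is derived equivalent to $\alg_2'$. Composing with the two reflection equivalences $\alg_1\sim\alg_1'$ and $\alg_2\sim\alg_2'$ (reflections are invertible) yields that $\alg_1$ and $\alg_2$ are derived equivalent, as desired.

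The main obstacle is the geometric bookkeeping of the twist. One must run through the local configurations of the dictionary table to confirm that twisting $\te{}$ lowers the separation between the differing triangles from one to zero while preserving the boundary distribution of each cut and the equality $\#D'=2$. The delicate point is that the neighbor of $\triangle_0$ involved in the twist is itself a triangle of $D$, so the reflection entry realizing the twist need not be identical for $\chi_1$ and $\chi_2$; one must check case by case, using that the two cuts agree on $\triangle_0$ where the reflection is performed, that in every configuration the transported cut vertex lands in the triangle adjacent to $\triangle_1$. Once this compatibility is established, the reduction to Lemma~\ref{lem:0tri} and the composition of derived equivalences are immediate.
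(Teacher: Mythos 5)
Your proposal is correct and is essentially the paper's own argument: the paper likewise reduces to Lemma~\ref{lem:0tri} by reflecting at an edge of the separating triangle (it reflects at $i$, the edge shared with $\triangle_1$, and records the resulting composite sequence $R_iR_jR_iR_j$ across the four configurations of Figure~\ref{fig:1tri}, with the mirror cases handled by co-reflections). Your only deviations --- twisting the edge on the $\triangle_2$ side and applying one (co-)reflection to each of the two algebras before invoking Lemma~\ref{lem:0tri} on the new triangulation, instead of writing a single chain of reflections carrying $\alg_1$ to $\alg_2$ --- are organizational, and the case-check you flag (reflection for one cut, co-reflection for the other, with the transported cut vertices tracked via Proposition~\ref{prop:dict}) is exactly the bookkeeping the paper's figures encode.
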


\begin{proof}
In Figure~\ref{fig:1tri} we see the four possible arrangements of $\triangle_1$ 
and $\triangle_2$.
\begin{figure}
\centering
\begin{tikzpicture}
{ []
	\draw (0,0) -- (4,0) (0,-2) -- (4,-2);
	\foreach \x [count=\n] in {1,1.6,2.15,3.25}
		\node[vert,name=t\n] at (\x,0){};
	\foreach \x [count=\n] in {1.5,2.25,3}
		\node[vert,name=b\n] at (\x,-2){};
	\draw (t1) edge[bend right=35]  (t3) edge (b1) 
		(t3) edge node[int] {$i$} (b1)
		(b1) edge[bend left=35] (b3)  edge node[int] {$j$} (t4)
		(b3) edge (t4);
	\draw[rcut] 	
		($(t3) +(45:.25cm)$) -- ($(t3) +(-135:.5cm)$) 
		($(b1) +(40:.5cm)$) -- ($(b1) +(-140:.25cm)$);
	\draw[bcut] 	
		($(t4) +(65:.25cm)$) -- ($(t4) +(-115:.5cm)$) 
		($(b1) +(90:.5cm)$) -- ($(b1) +(-90:.25cm)$);
	\node at (0,-1) {(a)};
}
{ [xshift =5cm]
	\draw (0,0) -- (4,0) (0,-2) -- (4,-2);
	\foreach \x [count=\n] in {1,1.6,2.15,2.85,3.5}
		\node[vert,name=t\n] at (\x,0){};
	\foreach \x [count=\n] in {1.5,2.85}
		\node[vert,name=b\n] at (\x,-2){};
	\draw (t1) edge[bend right=35] (t3) edge (b1) 
		(b1) edge node[int] {$i$} (t3) 
		(b2) edge node[int] {$j$} (t3)  edge (t5)
		(t3) edge[bend right=35] (t5);
	\draw[rcut] 	
		($(t3) +(45:.25cm)$) -- ($(t3) +(-135:.5cm)$) 
		($(b2) +(90:.5cm)$) -- ($(b2) +(-90:.25cm)$);
	\draw[bcut] 	
		($(t3) +(135:.25cm)$) -- ($(t3) +(-45:.5cm)$) 
		($(b1) +(90:.5cm)$) -- ($(b1) +(-90:.25cm)$);
	\node at (0,-1) {(b)};
}
{ [yshift=-3cm]
	\draw (0,0) -- (4,0) (0,-2) -- (4,-2);
	\foreach \x [count=\n] in {1.5,2.25,3}
		\node[vert,name=t\n] at (\x,0){};
	\foreach \x [count=\n] in {1,1.6,2.15,3.25}
		\node[vert,name=b\n] at (\x,-2){};
	\draw (b1) edge[bend left=35] (b3) edge (t1) 
		(b3) edge node[int] {$i$} (t1)
		(t1) edge[bend right=35] (t3)  edge node[int] {$j$} (b4)
		(t3) edge (b4);
	\draw[rcut] 	
		($(b4) +(115:.5cm)$) -- ($(b4) +(-65:.25cm)$) 
		($(t1) +(-90:.5cm)$) -- ($(t1) +(90:.25cm)$);
	\draw[bcut] 	
		($(b3) +(125:.5cm)$) -- ($(b3) +(-55:.25cm)$) 
		($(t1) +(-40:.5cm)$) -- ($(t1) +(140:.25cm)$);
	\node at (0,-1) {(c)};
}
{ [xshift =5cm,yshift=-3cm]
	\draw (0,0) -- (4,0) (0,-2) -- (4,-2);
	\foreach \x [count=\n] in {1.5,2.85} 
		\node[vert,name=t\n] at (\x,0){};
	\foreach \x [count=\n] in {1,1.6,2.15,2.85,3.5}
		\node[vert,name=b\n] at (\x,-2){};
	\draw (b1) edge[bend left=35] (b3) edge (t1) 
		(t1) edge node[int] {$j$} (b3) 
		(t2) edge node[int] {$i$} (b3)  edge (b5)
		(b3) edge[bend left=35] (b5);
	\draw[rcut] 	
		($(b3) +(-125:.25cm)$) -- ($(b3) +(55:.5cm)$) 
		($(t1) +(-90:.5cm)$) -- ($(t1) +(90:.25cm)$);
	\draw[bcut] 	
		($(b3) +(-55:.25cm)$) -- ($(b3) +(125:.5cm)$) 
		($(t2) +(-90:.5cm)$) -- ($(t2) +(90:.25cm)$);
	\node at (0,-1) {(d)};
}
\end{tikzpicture}
\caption{The possible arrangements of $\triangle_1$ and 
$\triangle_2$ in Lemma~\ref{lem:1tri}.  The red lines 
represent $\cutt_1$, the blue dashed lines $\cutt_2$.}\label{fig:1tri}
\end{figure}
In each case we reduce to Lemma~\ref{lem:0tri} by a reflection at $i$. Specifically, the
desired sequence of reflections is $R_iR_jR_iR_j$. Note that in each of these pictures we have
assumed that the left most triangle was always cut along the upper boundary component. By flipping
each picture along the horizontal axis, we can see each situation with the left most triangle cut in
the lower boundary component. In these cases the desired reduction come from the co-reflection at
$j$.
\end{proof}

\begin{lemma}\label{lem:2tri}
If there are exactly two triangles separating $\triangle_1$ and $\triangle_2$, then 
$\alg_1$ and $\alg_2$ are derived equivalent. 
\end{lemma}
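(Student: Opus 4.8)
The plan is to treat the number of triangles separating $\triangle_1$ and $\triangle_2$ as an induction parameter and to reduce the present case to the one already handled in Lemma~\ref{lem:1tri}, exactly as that lemma reduced to Lemma~\ref{lem:0tri}. The engine of each reduction is the dictionary of Theorem~\ref{prop dict}, together with Proposition~\ref{prop:dict}, which realizes a reflection (or co-reflection) as a twist of an arc that moves a single local cut across one adjacent triangle without passing through any other cut of $\chi_1$. Since $S$ is an annulus, the triangles incident to the two boundary components sit in a strip, so $\triangle_1$, the two separating triangles $A_1,A_2$, and $\triangle_2$ occur as a row $\triangle_1, A_1, A_2, \triangle_2$ in this strip, and the misplaced cut of $\triangle_1$ must be slid past $A_1$ (or $A_2$) to bring it within one triangle of $\triangle_2$.

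First I would enumerate the possible local arrangements of $\triangle_1, A_1, A_2, \triangle_2$ with the cuts $\chi_1$ (red) and $\chi_2$ (blue) drawn in, in a figure analogous to Figure~\ref{fig:1tri}; as in Lemma~\ref{lem:1tri} it suffices to treat the configurations in which the leftmost triangle $\triangle_1$ carries its cut on the upper boundary component, the remaining configurations being obtained by reflecting the strip across the horizontal axis and replacing each reflection by the corresponding co-reflection. Next, in each arrangement I would apply a reflection at the edge $\te k$ shared by $\triangle_1$ and the adjacent separating triangle $A_1$. By Proposition~\ref{prop:dict} this reflection is a twist that slides the local cut of $\triangle_1$ across $A_1$ while leaving $\triangle_2$ and every unrelated cut fixed, producing a triangulation $(S,M,T')$ with admissible cut $\chi_1'$ for which $(\chi_1',\chi_2)$ still has $\# D = 2$ but with exactly one separating triangle. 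Theorem~\ref{mainthm} (via the reflection's Brenner--Butler tilt) gives a derived equivalence $\alg_1 \sim \alg_1'$, and Lemma~\ref{lem:1tri} applied to $(\chi_1',\chi_2)$ gives $\alg_1' \sim \alg_2$; composing yields $\alg_1 \sim \alg_2$. Explicitly, if $R_k$ is the reduction reflection and $R_iR_jR_iR_j$ is the sequence furnished by Lemma~\ref{lem:1tri}, the full sequence is $R_k R_i R_j R_i R_j$, with co-reflections substituted in the horizontally flipped cases.

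The main obstacle will be the case bookkeeping rather than any new idea. With two intervening triangles each of $A_1,A_2$ may point ``up'' or ``down'' in the strip, and the endpoint at which each cut is incident varies, so there are several configurations to draw; for each I must verify that the chosen vertex $\te k$ actually meets the hypothesis under which $R_k$ (or $R_k^-$) is defined, and that the resulting twist slides precisely the intended cut of $\triangle_1$ and produces a configuration matching the hypotheses of Lemma~\ref{lem:1tri} verbatim. A secondary point to check is that sliding past $A_1$ does not accidentally alter the separating count on the far side near $\triangle_2$, which is guaranteed by the locality remark following Theorem~\ref{prop dict} but should be confirmed in each picture.
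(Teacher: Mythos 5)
Your reduction has the right geometric instinct---the reflection at the edge shared by $\triangle_1$ and $A_1$ does slide the cut internal triangle one step toward $\triangle_2$, and this is essentially the move the paper itself uses for two of its four configurations---but the logical frame around it has a genuine gap. A reflection is not an operation on cuts of a \emph{fixed} triangulation: by Theorem~\ref{prop dict} and Proposition~\ref{prop:dict} it acts on the surface as a twist (a flip) of the chosen edge, so after $R_k$ your cut $\chi_1'$ is an admissible cut of the new triangulation $T'=\mu_{\te{k}}(T)$, while $\chi_2$ still lives on $T$. The set $D$ and the phrase ``one separating triangle'' are only defined for a pair of cuts of one and the same triangulated surface, so the claim that ``$(\chi_1',\chi_2)$ still has $\#D=2$ but with exactly one separating triangle'' does not parse, and Lemma~\ref{lem:1tri} cannot be applied to this pair. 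Equivalently, in terms of your explicit sequence $R_kR_iR_jR_iR_j$: the reduction reflection $R_k$ occurs exactly once and is never undone, so the quiver you end at is a cut quiver of $\mu_{\te{k}}(T)$, which is not (and in general is not even isomorphic to) the quiver of $\alg_2$; your chain of tilts therefore never arrives at $\alg_2$, no matter how the intermediate steps are arranged.

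What is missing is the conjugation structure that makes the paper's induction work: each lemma must produce a \emph{closed loop} of reflections, one that returns to the triangulation it started from while carrying the first cut to the second, and the reduction step then conjugates the loop supplied by the earlier lemma by the reducing reflection. This is why the paper's sequences begin and end with the same reflection ($R_iR^*R_i$ for case (c), with (d) first reduced to (c); and, as made explicit in Corollary~\ref{thm:last}, $R_jR^*R_iR_iR_j$ for the spread cases), so that the triangulation comes back to $T$ and one can check the final cut is literally $\chi_2$. Note also that the paper does not reduce cases (a),(b) by your move: there it reflects at the \emph{middle} edge $j$ shared by the two separating triangles, which creates a new internal (hence cut) triangle adjacent to both $\triangle_1$ and $\triangle_2$, and then applies Lemma~\ref{lem:0tri} twice before undoing $R_j$. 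Your slide-by-one reduction could be made to work, but only if you (i) transport the target, i.e.\ exhibit a cut $\chi_2'$ of $T'$ such that $(\chi_1',\chi_2')$ is an instance of Lemma~\ref{lem:1tri} on $T'$, and (ii) append the undoing reflection and verify, via Proposition~\ref{prop:dict}, that it carries $\chi_2'$ back to $\chi_2$ on $T$; neither step appears in your proposal, and they are precisely where the content of the lemma lies.
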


\begin{proof}
We proceed as in Lemma~\ref{lem:1tri}.  The possible configurations for $\triangle_1$ and $\triangle_2$ 
are shown in Figure~\ref{fig:2tri}. As in Lemma~\ref{lem:1tri}, we focus on those cases were the 
left most triangle is cut in the upper boundary.
\begin{figure}
\centering
\begin{tikzpicture}
{ []
	\draw (0,0) -- (5,0) (0,-2) -- (5,-2);
	\foreach \x [count=\n] in {1,1.6,2.25}
		\node[vert,name=t\n] at (\x,0){};
	\foreach \x [count=\n] in {1.5,2.25,3,3.6,4.2}
		\node[vert,name=b\n] at (\x,-2){};
	\draw (t1) edge[bend right=35] node[int] {$m$} (t3) edge (b1) 
		(t3) edge node[int] {$i$} (b1) edge node[int] {$j$} (b2) edge node[int] {$k$} (b3)
		(b3) edge[bend left=35] node[int] {$\ell$} (b5)
		(b5) edge (t3);
	\draw[rcut] 	
		($(t3) +(45:.25cm)$) -- ($(t3) +(-135:.5cm)$) 
		($(b3) +(90:.5cm)$) -- ($(b3) +(-90:.25cm)$);
	\draw[bcut] 	
		($(t3) +(135:.25cm)$) -- ($(t3) +(-55:.5cm)$) 
		($(b1) +(90:.5cm)$) -- ($(b1) +(-90:.25cm)$);
	\node at (0,-1) {(a)};
}
{ [xshift =6cm]
	\draw (0,0) -- (5,0) (0,-2) -- (5,-2);
	\foreach \x [count=\n] in {1,1.6,2.25,3,3.75}
		\node[vert,name=t\n] at (\x,0){};
	\foreach \x [count=\n] in {2,2.75,3.5}
		\node[vert,name=b\n] at (\x,-2){};
	\draw (t1) edge[bend right=35] node[int] {$m$} (t3) edge (b1) 
		(b1) edge node[int] {$j$} (t4) edge node[int] {$k$} (t5) edge node[int,left] {$i$} (t3) 
		(b1) edge[bend left=35]  node[int] {$\ell$}  (b3) 
		(b3) edge (t5);
	\draw[rcut] 	
		($(t3) +(45:.25cm)$) -- ($(t3) +(-135:.5cm)$) 
		($(b1) +(40:.5cm)$) -- ($(b1) +(-140:.25cm)$);
	\draw[bcut] 	
		($(t5) +(65:.25cm)$) -- ($(t5) +(245:.5cm)$) 
		($(b1) +(100:.5cm)$) -- ($(b1) +(280:.25cm)$);
	\node at (0,-1) {(b)};
}
{ [yshift=-3cm]
	\draw (0,0) -- (5,0) (0,-2) -- (5,-2);
	\foreach \x [count=\n] in {.75,1.5,2.15,3}
		\node[vert,name=t\n] at (\x,0){};
	\foreach \x [count=\n] in {1.5,2.25,3,3.75}
		\node[vert,name=b\n] at (\x,-2){};
	\draw (t1) edge[bend right=35] node[int] {$m$} (t3) edge (b1) 
		(t3) edge node[int] {$i$} (b1) edge node[int] {$j$} (b2)
		(b2) edge node[int] {$k$} (t4)
		(b2) edge[bend left=35] node[int] {$\ell$} (b4) 
		(b4) edge (t4);
	\draw[rcut] 	
		($(t3) +(45:.25cm)$) -- ($(t3) +(-135:.5cm)$) 
		($(b2) +(45:.5cm)$) -- ($(b2) +(-135:.25cm)$);
	\draw[bcut] 	
		($(t4) +(90:.25cm)$) -- ($(t4) +(-90:.5cm)$) 
		($(b1) +(90:.5cm)$) -- ($(b1) +(90+180:.25cm)$);
	\node at (0,-1) {(c)};
}
{ [xshift =6cm,yshift=-3cm]
	\draw (0,0) -- (5,0) (0,-2) -- (5,-2);
	\foreach \x [count=\n] in {.75,1.5,2.15,3}
		\node[vert,name=t\n] at (\x,0){};
	\foreach \x [count=\n] in {1.5,2.25,3,3.75}
		\node[vert,name=b\n] at (\x,-2){};
	\draw (t1) edge[bend right=35] node[int] {$m$} (t3) edge (b1) 
		(t3) edge node[int] {$i$} (b1) 
		(b1) edge node[int] {$j$} (t4)
		(b2) edge node[int] {$k$} (t4)
		(b2) edge[bend left=35] node[int] {$\ell$} (b4) 
		(b4) edge (t4);
	\draw[rcut] 	
		($(t3) +(45:.25cm)$) -- ($(t3) +(-135:.5cm)$) 
		($(b2) +(45:.5cm)$) -- ($(b2) +(-135:.25cm)$);
	\draw[bcut] 	
		($(t4) +(90:.25cm)$) -- ($(t4) +(90+180:.5cm)$) 
		($(b1) +(90:.5cm)$) -- ($(b1) +(90+180:.25cm)$);
	\node at (0,-1) {(d)};
}
\end{tikzpicture}
\caption{The possible arrangements of $\triangle_1$ and $\triangle_2$ in 
Lemma~\ref{lem:2tri} The red lines 
represent $\cutt_1$, the blue dashed lines $\cutt_2$.}\label{fig:2tri}
\end{figure}
First note that case (d) reduces to (c) by a reflection at $j$. A reflection at $i$ then reduces (c) to 
Lemma~\ref{lem:0tri}.  Let $R^*$ denote the corresponding sequence of reflections from Lemma~\ref{lem:0tri}.  Then the desired sequence of reflections in case (c) is $R_iR^*R_i$.

Similarly, (a) and (b) reduce to Lemma~\ref{lem:0tri} by a reflection at $j$.
By reflecting at $j$ we introduce a new cut triangle connecting $\triangle_1$ and $\triangle_2$. The cut will be in the lower boundary and upper boundary for case (a) and (b) respectively. We will explicitly discuss the sequence of reflections in case (a), the reflections for case (b) can be
found in a similar manner. We may then apply Lemma~\ref{lem:0tri} to this new triangle and
$\triangle_1$, so as to move the cut in the upper boundary to the lower. We then apply
Lemma~\ref{lem:0tri} to the new middle triangle and $\triangle_2$, to move the cut in $\triangle_2$ 
to the upper boundary.
\end{proof}

Using the above lemmas we get the following special case of Theorem~\ref{mainthm}.

\begin{cor}\label{thm:last}
Let $S$ be an annulus and $\alg_1$ and $\alg_2$ be algebras coming from 
$\cutt_1$ $\cutt_2$ respectively. If $\cutt_1(\bcpt) = \cutt_{2}(\bcpt)$ 
for both boundary components $\bcpt$ in $S$, that is $(\cutt_1,\cutt_2)$ 
are equi-distributed, then $\alg_1$ is derived equivalent to $\alg_2$.
\end{cor}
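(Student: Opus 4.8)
The plan is to induct on the integer $m$ with $\#D = 2m$, resolving one pair of disagreeing triangles at each step. Since $(\cutt_1,\cutt_2)$ is equi-distributed we have $\cutt_1(\bcpt)=\cutt_2(\bcpt)$ for each of the two boundary components, so the counting lemma applies to each $\bcpt$ and supplies the pairing of $D$; moreover, by the remark following that lemma, on the annulus $D$ is independent of the choice of $\bcpt$. This last point is what makes the two-component bookkeeping painless: a cut that leaves the outer boundary in a triangle of $D$ necessarily lands on the inner boundary, so resolving a discrepancy with respect to one component resolves it with respect to the other as well. Throughout I would use that every reflection and co-reflection in the dictionary is a Brenner--Butler tilt, hence a derived equivalence of the associated surface algebras, so any finite composite of dictionary moves preserves the derived equivalence class; it therefore suffices to transform $\cutt_1$ into $\cutt_2$ by such moves.

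For the inductive step, assume $m\ge 1$ and choose a pair $\{\triangle_1,\triangle_2\}\subseteq D$ that is \emph{adjacent} in $D$, meaning that one of the two arcs of the cyclic strip joining them meets no other triangle of $D$; let $s$ be the number of triangles strictly between $\triangle_1$ and $\triangle_2$ along that arc. Because reflections are local to a two-vertex neighbourhood of the reflected vertex (the remark after Theorem~\ref{prop dict}), and this arc is free of $D$-triangles, a sequence of moves supported on $\triangle_1$, $\triangle_2$, and the $s$ intermediate triangles will exchange the boundary components carrying the two cuts without disturbing any other local cut. This replaces $\cutt_1$ by a derived equivalent cut $\cutt_1'$ with $\cutt_1'(\bcpt)$ unchanged for both $\bcpt$ and with $\#D$ lowered by $2$, so the induction closes. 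The base case $m=0$ is the situation in which $\cutt_1$ and $\cutt_2$ place their cut on the same boundary component in every triangle; any residual disagreement then only moves a cut between two vertices of a single triangle on one boundary, which is once more a dictionary move and hence a derived equivalence.

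The heart of the argument, and the step I expect to be the main obstacle, is producing the swapping sequence for a pair separated by $s$ intermediate triangles. Lemmas~\ref{lem:0tri}, \ref{lem:1tri}, and~\ref{lem:2tri} do this explicitly for $s=0,1,2$ (with reflection sequences $R_iR_i$, $R_iR_jR_iR_j$, and $R_iR^*R_i$), and their proofs expose the mechanism: reflecting at an edge of an intermediate triangle pushes one cut one triangle closer, strictly decreasing $s$ while creating at most a transient auxiliary cut that is immediately reabsorbed. I would therefore formalise an inner induction on $s$ with Lemma~\ref{lem:0tri} as base case and the reductions of Lemmas~\ref{lem:1tri} and~\ref{lem:2tri} as the template for the inductive step, verifying at each stage that the reflection invoked is actually \emph{defined} (its hypothesis on the companion arrow $\beta_\alpha$ holds) and that triangle types and orientation are tracked correctly as the cut travels around the strip. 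The one genuinely global check is that no inconsistency accumulates around the loop of the annulus; but since we always operate inside a $D$-free arc and restore each $\cutt_i(\bcpt)$ after every pair, equi-distribution rules this out, exactly as the vanishing of $\Delta_\chi\rho$ did in the proof of Theorem~\ref{thm1}.
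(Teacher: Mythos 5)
Your proposal is correct and takes essentially the same approach as the paper: the pairing of $D$ supplied by the counting lemma, the dictionary reflections viewed as Brenner--Butler tilts (hence derived equivalences), and a double induction --- outer on the number of disagreeing pairs, inner on the number of triangles separating a minimally-separated pair --- with Lemmas~\ref{lem:0tri}, \ref{lem:1tri}, and~\ref{lem:2tri} as base cases, and with double (co-)reflections absorbing the residual same-boundary, different-vertex disagreements. The only difference is organizational: the paper settles $\#D=2$ completely first and then iterates over minimally-separated pairs, while you cast this as an explicit outer induction on $m$; the substance is identical.
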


\begin{proof} 
Let $D=\{ \triangle : \cutt_{1,\triangle}(\bcpt)\neq \cutt_{2,\triangle}(\bcpt)\}$, 
we begin by assuming that $\#D=2$, say $D=\{\triangle_1,\triangle_2\}$. In this setup we may
even assume that there are no internal triangles separating $\triangle_1$ and $\triangle_2$, the process
we will describe is transitive between internal triangles. We will show, by induction, that there is
a sequence of reflections that allow us to swap the cuts in $\triangle_1$ and $\triangle_2$.

Throughout we will denote cuts as in the dictionary, by red lines bisecting the cut vertex between
the endpoints of the resulting relation. Let $\triangle_1$ be the triangle containing $m$ and $i$. We
focus on the different configurations for $\triangle_1$, the different cases corresponding to
different configurations of $\triangle_2$ are hidden and dealt with in the induction step.

The initial case. By Lemmas~\ref{lem:0tri}, \ref{lem:1tri}, \ref{lem:2tri} we can resolve $\triangle_1$
and $\triangle_2$ when there are $0$, $1$ or $2$ triangles separating them. Now assume that we can
resolve $\triangle_1$ and $\triangle_2$ with up to $t$ triangles separating them. Let $R^*$ denote the
composition of reflections necessary for the induction hypothesis. Then we have one of the following
picture for $(S,M,T)$:
 
\[\begin{tikzpicture}[xscale=.85]

{ []
	\draw (0,0) -- (6,0) (0,-2) -- (6,-2);
	\foreach \x [count=\n] in {1,1.6,2.15,5}
		\node[vert,name=t\n] at (\x,0){};
	\foreach \x [count=\n] in {1.5,2.25,3,4.5,5,5.5}
		\node[vert,name=b\n] at (\x,-2){};
	\draw (t1) edge[bend right=35] node[int] {$m$} (t3) edge (b1) 
		(t3) edge node[int] {$i$} (b1) edge node[int] {$j$} (b2) edge node[int] {$k$} (b3)
		(b4) edge[bend left=35] node[int] {$\ell$} (b6)  edge (t4)
		(b6) edge (t4);
	\draw[rcut] 	
		($(t3) +(45:.25cm)$) -- ($(t3) +(-135:.5cm)$) 
		($(b4) +(45:.5cm)$) -- ($(b4) +(-135:.25cm)$);
	\draw[bcut] 	
		($(t4) +(90:.25cm)$) -- ($(t4) +(90+180:.5cm)$) 
		($(b1) +(90:.5cm)$) -- ($(b1) +(90+180:.25cm)$);
	\node at (3.75,-1) {$\cdots$};
	\node at (0,-1) {(a)};
}
{ [xshift =7cm]
	\draw (0,0) -- (6,0) (0,-2) -- (6,-2);
	\foreach \x [count=\n] in {1,1.6,2.15,2.55,3.15,5}
		\node[vert,name=t\n] at (\x,0){};
	\foreach \x [count=\n] in {1.5,4.5,5,5.5}
		\node[vert,name=b\n] at (\x,-2){};
	\draw (t1) edge[bend right=35] node[int] {$m$} (t3) edge (b1) 
		(b1) edge node[int] {$i$} (t4) edge node[int] {$j$} (t5) edge node[int,left] {$k$} (t3) 
		(b2) edge[bend left=35]  node[int] {$\ell$}  (b4)  edge (t6)
		(b4) edge (t6);
	\draw[rcut] 	
		($(t3) +(45:.25cm)$) -- ($(t3) +(-135:.5cm)$) 
		($(b2) +(45:.5cm)$) -- ($(b2) +(-135:.25cm)$);
	\draw[bcut] 	
		($(t6) +(90:.25cm)$) -- ($(t6) +(90+180:.5cm)$) 
		($(b1) +(90:.5cm)$) -- ($(b1) +(90+180:.25cm)$);
	\node at (3.75,-1) {$\cdots$};
	\node at (0,-1) {(b)};
}
{ [yshift=-3cm]
	\draw (0,0) -- (6,0) (0,-2) -- (6,-2);
	\foreach \x [count=\n] in {1,1.6,2.15,3,5}
		\node[vert,name=t\n] at (\x,0){};
	\foreach \x [count=\n] in {1.5,2.25,4.5,5,5.5}
		\node[vert,name=b\n] at (\x,-2){};
	\draw (t1) edge[bend right=35] node[int] {$m$} (t3) edge (b1) 
		(t3) edge node[int] {$i$} (b1) edge node[int] {$j$} (b2)
		(b2) edge node[int] {$k$} (t4)
		(b3) edge[bend left=35] node[int] {$\ell$} (b5)  edge (t5)
		(b5) edge (t5);
	\draw[rcut] 	
		($(t3) +(45:.25cm)$) -- ($(t3) +(-135:.5cm)$) 
		($(b3) +(45:.5cm)$) -- ($(b3) +(-135:.25cm)$);
	\draw[bcut] 	
		($(t5) +(90:.25cm)$) -- ($(t5) +(90+180:.5cm)$) 
		($(b1) +(90:.5cm)$) -- ($(b1) +(90+180:.25cm)$);		
	\node at (3.75,-1) {$\cdots$};
	\node at (0,-1) {(c)};
}
{ [xshift =7cm,yshift=-3cm]
	\draw (0,0) -- (6,0) (0,-2) -- (6,-2);
	\foreach \x [count=\n] in {1,1.6,2.15,3,5}
		\node[vert,name=t\n] at (\x,0){};
	\foreach \x [count=\n] in {1.5,2.25,4.5,5,5.5}
		\node[vert,name=b\n] at (\x,-2){};
	\draw (t1) edge[bend right=35] node[int] {$m$} (t3) edge (b1) 
		(t3) edge node[int] {$i$} (b1) 
		(t4) edge node[int] {$j$} (b1)
		(b2) edge node[int] {$k$} (t4)
		(b3) edge[bend left=35] node[int] {$\ell$} (b5)  edge (t5)
		(b5) edge (t5);
	\draw[rcut] 	
		($(t3) +(45:.25cm)$) -- ($(t3) +(-135:.5cm)$) 
		($(b3) +(45:.5cm)$) -- ($(b3) +(-135:.25cm)$);
	\draw[bcut] 	
		($(t5) +(90:.25cm)$) -- ($(t5) +(90+180:.5cm)$) 
		($(b1) +(90:.5cm)$) -- ($(b1) +(90+180:.25cm)$);
	\node at (3.75,-1) {$\cdots$};
	\node at (0,-1) {(d)};
}
\end{tikzpicture}\]
Note that case (d) reduces to (c) by a reflection at $j$, hence we only focus on
(a), (b), and (c). The desired sequence of reflections is $R_jR^*R_iR_iR_j$,
$R_jR_iR_iR^*R_j$, $R_iR^*R_i$ for (a), (b) and (c) respectively. For example,
using the dictionary we get the following sequence of pictures in
Figure~\ref{fig:mainThm}.
\begin{figure}
\centering 
 
\begin{tikzpicture}[yscale=.7]
{ [yscale=.85]
\draw (0,0) -- (6,0) (0,-2) -- (6,-2);
\foreach \x [count=\n] in {1,1.6,2.15,5}
	\node[vert,name=t\n] at (\x,0){};
\foreach \x [count=\n] in {1.5,2.25,3,4.5,5,5.5}
	\node[vert,name=b\n] at (\x,-2){};
\draw (t1) edge[bend right=35] node[int] {$m$} (t3) edge (b1) 
	(t3) edge node[int] {$i$} (b1) edge node[int] {$j$} (b2) edge node[int] {$k$} (b3)
	(b4) edge[bend left=35] node[int] {$\ell$} (b6)  edge (t4)
	(b6) edge (t4);
\draw[rcut] 	
	($(t3) +(45:.25cm)$) -- ($(t3) +(-135:.5cm)$) 
	($(b4) +(45:.5cm)$) -- ($(b4) +(-135:.25cm)$);
\draw[bcut] 	
	($(t4) +(90:.25cm)$) -- ($(t4) +(90+180:.5cm)$) 
	($(b1) +(90:.5cm)$) -- ($(b1) +(90+180:.25cm)$);
\node at (3.75,-1) {$\cdots$};
}
\draw[->] (3,-2.5) -- (3,-3.5) node[right,pos=.5] {$R_j$};
{ [yshift=-4cm,yscale=.85]
\draw (0,0) -- (6,0) (0,-2) -- (6,-2);
\foreach \x [count=\n] in {1,1.6,2.15,5}
	\node[vert,name=t\n] at (\x,0){};
\foreach \x [count=\n] in {1.5,2.25,3,4.5,5,5.5}
	\node[vert,name=b\n] at (\x,-2){};
\draw (t1) edge[bend right=35] node[int] {$m$} (t3) edge (b1) 
	(t3) edge node[int] {$i$} (b1)  edge node[int] {$k$} (b3)
	(b1) edge[bend left=35] node[int] {$j$} (b3)
	(b4) edge[bend left=35] (b6)  edge (t4)
	(b6) edge (t4);
\draw[rcut] 	
	($(t3) +(45:.25cm)$) -- ($(t3) +(-135:.5cm)$) 
	($(b4) +(45:.5cm)$) -- ($(b4) +(-135:.25cm)$)
	($(b1) +(45:.5cm)$) -- ($(b1) +(-135:.25cm)$);
\node at (3.75,-1) {$\cdots$};
}
\draw[->] (3,-6.5) -- (3,-7.5) node[right,pos=.5] {$R_iR_i$};
{ [yshift=-8cm,yscale=.85]
\draw (0,0) -- (6,0) (0,-2) -- (6,-2);
\foreach \x [count=\n] in {1,1.6,2.15,5}
	\node[vert,name=t\n] at (\x,0){};
\foreach \x [count=\n] in {1.5,2.25,3,4.5,5,5.5}
	\node[vert,name=b\n] at (\x,-2){};
\draw (t1) edge[bend right=35] node[int] {$m$} (t3) edge (b1) 
	(t3) edge node[int] {$i$} (b1)  edge node[int] {$k$} (b3)
	(b1) edge[bend left=35] node[int] {$j$} (b3)
	(b4) edge[bend left=35] (b6)  edge (t4)
	(b6) edge (t4);
\draw[rcut] 	
	($(t3) +(90:.25cm)$) -- ($(t3) +(-90:.5cm)$) 
	($(b4) +(45:.5cm)$) -- ($(b4) +(-135:.25cm)$)
	($(b1) +(90:.5cm)$) -- ($(b1) +(-90:.25cm)$);
\node at (3.75,-1) {$\cdots$};
}
\draw[->] (3,-10.5) -- (3,-11.5) node[right,pos=.5] {$R^*$};
{ [yshift=-12cm,yscale=.85]
\draw (0,0) -- (6,0) (0,-2) -- (6,-2);
\foreach \x [count=\n] in {1,1.6,2.15,5}
	\node[vert,name=t\n] at (\x,0){};
\foreach \x [count=\n] in {1.5,2.25,3,4.5,5,5.5}
	\node[vert,name=b\n] at (\x,-2){};
\draw (t1) edge[bend right=35] node[int] {$m$} (t3) edge (b1) 
	(t3) edge node[int] {$i$} (b1)  edge node[int] {$k$} (b3)
	(b1) edge[bend left=35] node[int] {$j$} (b3)
	(b4) edge[bend left=35] (b6)  edge (t4)
	(b6) edge (t4);
\draw[rcut] 	
	($(t4) +(90:.25cm)$) -- ($(t4) +(-90:.5cm)$) 
	($(b3) +(135:.5cm)$) -- ($(b3) +(-45:.25cm)$)
	($(b1) +(90:.5cm)$) -- ($(b1) +(-90:.25cm)$);
\node at (3.75,-1) {$\cdots$};
}
\draw[->] (3,-14.5) -- (3,-15.5) node[right,pos=.5] {$R_j$};
{ [yshift=-16cm,yscale=.85]
\draw (0,0) -- (6,0) (0,-2) -- (6,-2);
\foreach \x [count=\n] in {1,1.6,2.15,5}
	\node[vert,name=t\n] at (\x,0){};
\foreach \x [count=\n] in {1.5,2.25,3,4.5,5,5.5}
	\node[vert,name=b\n] at (\x,-2){};
\draw (t1) edge[bend right=35] node[int] {$m$} (t3) edge (b1) 
	(t3) edge node[int] {$i$} (b1) edge node[int] {$j$} (b2)  edge node[int] {$k$} (b3)
	(b4) edge[bend left=35] (b6)  edge (t4)
	(b6) edge (t4);
\draw[rcut] 	
	($(t4) +(90:.25cm)$) -- ($(t4) +(-90:.5cm)$) 
	($(b1) +(90:.5cm)$) -- ($(b1) +(-90:.25cm)$);
\node at (3.75,-1) {$\cdots$};
}
\end{tikzpicture}
\caption{The sequence of reflections for case (a) of Corollary~\ref{thm:last}.}\label{fig:mainThm}
\end{figure}
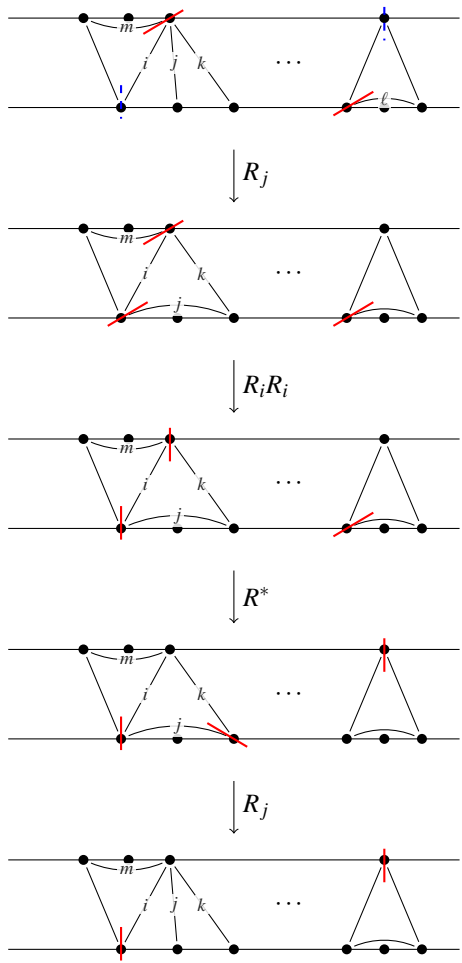
Note that if the cut incident to $m$ (resp. $\ell$) had been at the other vertex, a 
double (co)-reflection at $m$ (resp. $\ell$), would give us the above pictures. 

The proof for $\#D=2$ generalizes for arbitrary $\#D=2m$ by applying this 
proof to pairs $\triangle_1$ and $\triangle_2$ in $D$ with a minimal number of triangles separating them, 
doing so until all pairs have been resolved.  
\end{proof}



\end{document}